\DeclareSymbolFont{bbold}{U}{bbold}{m}{n}
\DeclareSymbolFontAlphabet{\mathbbm}{bbold}
\theoremstyle{plain}
	\newtheorem{theorem*}{Theorem}
	\newtheorem{thm}{Theorem}
	\newtheorem{coro}[thm]{Corollary}
	\newtheorem{theorem}{Theorem}[section]
\numberwithin{equation}{section}
	\newtheorem{proposition}[theorem]{Proposition}
	\newtheorem{lemma}[theorem]{Lemma}
	\newtheorem{corollary}[theorem]{Corollary}
	\newtheorem{claim}{Claim}[theorem]
\theoremstyle{definition}
	\newtheorem{definition}[theorem]{Definition}
\newtheorem{defn}[thm]{Definition}
	\newtheorem{example}[theorem]{Example}
	\newtheorem{conjecture}[theorem]{Conjecture}
	\newtheorem{question}[theorem]{Question}
\theoremstyle{remark}
\numberwithin{equation}{section}
\newcommand{\cA}{\mathcal A}
\newcommand{\bbA}{\mathbb{A}}
\newcommand{\bbB}{\mathbb{B}}
\newcommand{\bbC}{\mathbb{C}}
\newcommand{\bbL}{\mathbb{L}}
\newcommand{\bbP}{\mathbb{P}}
\newcommand{\bbQ}{\mathbb{Q}}
\newcommand{\bbT}{\mathbb{T}}
\newcommand{\cstar}{$\mathrm{C}^\ast$}
\newcommand{\awstar}{$\mathrm{AW}^\ast$}
\newcommand{\cB}{\mathcal{B}}
\newcommand{\bbR}{\mathbb{R}}
\newcommand{\bbN}{\mathbb{N}}
\newcommand{\e}{\varepsilon}
\newcommand{\cU}{\mathcal{U}}
\newcommand{\cR}{\mathcal{R}}
\newcommand{\Laffx}{\cL^{\text{aff}}_{\bar x}}
\newcommand{\fA}{\Vdash_{\bbA}}
\newcommand{\frs}{\mathfrak s}
\newcommand{\xMapsto}[2][]{\ext@arrow 0599{\Mapstofill@}{#1}{#2}}
\def\Mapstofill@{\arrowfill@{\Mapstochar\Relbar}\Relbar\Rightarrow}
\renewcommand{\phi}{\varphi}
\newcommand{\wstar}{$\mathrm{W}^\ast$}
\newcommand{\wst}{\mathrm{W}^\ast}
\newcommand{\cL}{\mathcal L}
\newcommand{\cLR}{\mathcal L^\mathsf{R}}
\newcommand{\bt}{\mathbf t}
\newcommand{\rAG}{\dot r_\bbA} % Generic
\DeclareMathOperator{\cov}{cov}
\DeclareMathOperator{\Null}{Null}
\newcommand{\Boo}[1]{[\![#1]\!]} % Boolean value of #1
\title{Probably isomorphic  structures}
\date{}
\author[Ilijas Farah]{Ilijas Farah}
\address[IF]{Department of Mathematics and Statistics\\
	York University\\
	4700 Keele Street\\
	North York, Ontario\\ Canada, M3J 1P3\\
	and 
	Ma\-te\-ma\-ti\-\v cki Institut SANU\\
	Kneza Mihaila 36\\
	11\,000 Beograd, p.p. 367\\
	Serbia}
\email{ifarah@yorku.ca}
\urladdr{https://ifarah.mathstats.yorku.ca}
\thanks{I.F. is partially supported by NSERC. A.V. was funded by the Deutsche Forschungsgemeinschaft (DFG, German Research Foundation) under Germany’s Excellence Strategy – EXC 2044 – 390685587, Mathematics Münster – Dynamics – Geometry – Structure, the Deutsche Forschungsgemeinschaft (DFG, German Research Foundation) – Project-ID 427320536 – SFB 1442, and by the ERC Advanced Grant 834267 - AMAREC}
\author[Andrea Vaccaro]{Andrea Vaccaro}
\address[AV]{Institut Camille Jordan, Université Claude Bernard Lyon 1, 43 Boulevard du 11 novembre 1918, 69622, Villeurbanne, France}
	\email{vaccaro@math.univ-lyon1.fr}
\urladdr{https://sites.google.com/view/avaccaro/home}	
\keywords{}
\subjclass[2010]{}
\begin{document}
\maketitle

\begin{abstract}
	Two structures $M, N$ in the same language are \emph{probably isomorphic} if they (or, in case of metric structures,  their completions)  are isomorphic after forcing with the Lebesgue measure algebra. We show that, if $M$ and $N$ are discrete structures, or extremal models of a non-degenerate simplicial theory, then $M$ and $N$ are probably isomorphic if and only if $L^1([0,1], M) \cong L^1([0,1], N)$. We moreover employ some of the set-theoretic arguments used to prove the aforementioned result to characterize when nontrivial ultraproducts of diffuse von Neumann algebras are tensorially prime. 
\end{abstract}

\section{Introduction}

We continue the  study of `forcing isomorphisms', analysing when nonisomorphic structures can be forced to be isomorphic by a `nice' forcing (\cite{baldwin1993forcing}, \cite{laskowski1996forcing}, \cite{shelah2025twins}).  This is just one of the unjustifiably (as will be shown below)  neglected topics initiated by Saharon and his coauthors.

Our work develops in the context of affine logic. This is a variant of continuous logic, introduced in \cite{benyaacov2024extremal}, in which only affine functions (instead of all continuous functions, as in continuous logic) are allowed as connectives. This provides a setting where models of an affine theory are closed under taking convex combinations and even direct integrals, which in this framework have a natural formulation (see \cite[\S 8]{benyaacov2024extremal}) generalizing several classical constructions from functional analysis, such as direct integrals of Hilbert spaces or of von Neumann algebras (see e.g. \cite[Chapter IV]{Tak:TheoryI}). 

The type spaces of an affine theory can be naturally endowed with the structure of compact convex sets, and the  \emph{extremal models} of a theory are those that correspond to its extremal types (see \S\ref{S.Affine} for details). These structures are particularly important for \emph{simplicial} theories, namely affine theories whose type spaces are Choquet simplices: in this case every model of the theory can be decomposed as direct integral of extremal models---a specific form of this result had been in use for decades in the framework of separably representable von Neumann algebras (see e.g. \cite[\S IV.8]{Tak:TheoryI}), but its extension to the nonseparable setting is among the numerous novelties presented in \cite{benyaacov2024extremal}.

Let $M$ be a metric structure  and $(\Omega,  \Sigma, \mu)$ be  a probability space.
In this paper we explore a connection between the study of structures of the form $L^1(\Omega, M)$---which correspond precisely to direct integrals with fibre constantly equal to $M$ (see Definition \ref{Def.L1})---and forcing by measure algebras.

\begin{thm} \label{T.isomorphic_randomizations} Let $(\Omega, \Sigma, \mu)$ be a probability measure space and let $\bbA$ be the measure algebra obtained as the quotient of $\Sigma$ by the null ideal. Suppose that $M$ and $N$ are single-sorted discrete structures, or extremal models of the same non-degenerate, simplicial theory. Then $L^1(\Omega, M) \cong L^1(\Omega,N)$ if and only if $\bbA$ forces that the completions of $M$ and $N$ are isomorphic, or in the discrete case that $M$ and $N$ are isomorphic. 
\end{thm}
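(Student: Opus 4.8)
The plan is to realize $L^1(\Omega, M)$ as the \emph{$\bbA$-name model} of $M$, where $\bbA$ is treated as a complete Boolean algebra: an element of this model is an $\bbA$-name $\dot a$ with $\fA \dot a \in \dot{\bar M}$ (here $\dot{\bar M}$ is the $\bbA$-name for the completion of $\check M$ computed in the extension), two names being identified when forced equal, with metric $d(\dot a,\dot b)$ equal to the $\mu$-integral (equivalently, $\bbA$-``expectation'') of the name $\dot d(\dot a,\dot b)$ for a real in $[0,1]$, and each relation symbol interpreted by the corresponding expectation; in the discrete case this specializes to the measurable functions $\Omega\to M$ modulo a.e.\ equality with $d(\dot a,\dot b)=\mu\Boo{\dot a\ne\dot b}$. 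The first task is to check, using Definition~\ref{Def.L1}, that this name model is canonically isometrically isomorphic to $L^1(\Omega,M)$; the one delicate point is the completion: a name for an element of the completion of $\check M$ \emph{in the extension} need not be forced equal to a name for an element of $\check{\bar M}$, but it is a forced limit of such names, and these limits are exactly the Cauchy sequences completed by the (complete) metric space $L^1(\Omega,M)$ --- this is why the statement speaks of completions and why $L^1$, which automatically completes, is the right object. Granting this identification, isomorphisms of $L^1$-structures and $\bbA$-equivariant isomorphisms of name models become the same thing, and the two directions of the theorem become translations of one another through the forcing theorem.

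For ``$\Leftarrow$'': given a name $\dot f$ with $\fA$ ``$\dot f\colon\dot{\bar M}\to\dot{\bar N}$ is an isomorphism'', define $\Phi\colon L^1(\Omega,M)\to L^1(\Omega,N)$ by applying the name, $\Phi(\dot a):=\dot f(\dot a)$. It is well defined because $\fA \dot a=\dot b$ forces $\dot f(\dot a)=\dot f(\dot b)$; it preserves the metric and every relation because $\fA$ ``$\dot f$ does''; it is injective because $\dot f$ is forced injective; and it is surjective because $\fA \forall y\,\exists x\,(\dot f(x)=y)$ together with the maximum principle for $\bbA$-valued models produces, for each name $\dot b$ for an element of $\dot{\bar N}$, a name $\dot a$ with $\fA \dot f(\dot a)=\dot b$. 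Hence $\Phi$ is an isomorphism, so $L^1(\Omega,M)\cong L^1(\Omega,N)$.

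For ``$\Rightarrow$'' I would reduce to producing an isomorphism that is moreover $\bbA$-\emph{equivariant}, i.e.\ commutes with the ``gluing'' operations $(\dot a,\dot b)\mapsto\dot a\restriction c\,\sqcup\,\dot b\restriction c^\perp$ for $c\in\bbA$ (in $L^1$-terms, the patching/conditional-expectation operations coming from the direct-integral structure). Such an equivariant isomorphism disintegrates, by the standard correspondence between $\bbA$-equivariant maps of name models and $\bbA$-names for maps of the fibers, into a name $\dot f$ witnessing $\fA$ ``$\dot{\bar M}\cong\dot{\bar N}$'' --- modulo an automorphism of $\bbA$, which is harmless since forcing with $\bbA$ and with its image under an automorphism yield the same extension. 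So the crux is: \textbf{every isomorphism $\Phi\colon L^1(\Omega,M)\to L^1(\Omega,N)$ can be taken $\bbA$-equivariant.} In the discrete case I would show the gluing operations are already preserved by any isometric isomorphism, because the relevant data is metrically definable: the set of gluings of $\dot a$ and $\dot b$ coincides with the metric segment $\{\dot e : d(\dot a,\dot e)+d(\dot e,\dot b)=d(\dot a,\dot b)\}$, and choosing a pair $(\dot u,\dot v)$ whose gluings form a faithful isometric copy of $(\bbA,\mu(\cdot\,\triangle\,\cdot))$ inside $L^1(\Omega,M)$ (here the size/non-degeneracy hypotheses on $M$ enter, to guarantee such a pair exists and separates $\bbA$) one recovers $\bbA$ with its action; a coherence check across pairs then upgrades ``$\Phi$ sends gluings to gluings'' to genuine equivariance, possibly after composing with the automorphism of $\bbA$ just extracted. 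In the simplicial case I would instead invoke the uniqueness of the extremal decomposition of a model of a simplicial theory from \cite{benyaacov2024extremal}: $L^1(\Omega,M)$ is the direct integral of the extremal model $M$ over $(\Omega,\mu)$, its extremal decomposition is unique because the relevant type space is a Choquet simplex, and non-degeneracy ensures this decomposition genuinely recovers $(\Omega,\mu)$ up to isomorphism of measure algebras with constant fiber $M$; hence $\Phi$ must carry the extremal decomposition of $L^1(\Omega,M)$ to that of $L^1(\Omega,N)$, which is exactly $\bbA$-equivariance up to an automorphism of $\bbA$.

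The main obstacle is precisely this last step --- showing an \emph{a priori} global isomorphism is automatically, or can be made, local over $\bbA$. In the simplicial case it is essentially a citation to uniqueness of simplicial decompositions, but one must verify that the ``constant fiber'' situation behaves well and that non-degeneracy rules out a pathological, non-faithful decomposition; in the discrete case the metric-definability argument is elementary in spirit, but the coherence bookkeeping --- making a single automorphism of $\bbA$ work simultaneously for all elements --- is the delicate part, and small or rigid $M$ (few gluings available) are the cases to watch.
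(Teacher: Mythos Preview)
Your overall architecture---identify $L^1(\Omega,M)$ with the $\bbA$-name model of the completion, then translate a forced isomorphism into a map of name models and conversely an $\bbA$-equivariant isomorphism into a name---is exactly the paper's. The ``$\Leftarrow$'' direction is essentially the paper's Theorem~\ref{T.extension_to_random}, and your simplicial ``$\Rightarrow$'' case, invoking uniqueness of extremal decomposition from \cite{benyaacov2024extremal}, is the paper's Theorem~\ref{T.L1->X} via Theorem~\ref{L.DetectingCoordinates}.

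Where you diverge is the discrete case. You propose a direct metric-definability argument: gluings are exactly the points on metric segments, so any isometry preserves them, and then one extracts an automorphism of $\bbA$ and checks coherence. The paper takes a shorter route: it simply observes (citing \cite[\S 18]{benyaacov2024extremal}) that a complete single-sorted discrete theory \emph{is} simplicial and its discrete models \emph{are} extremal, so the discrete case is already covered by the simplicial argument and no separate treatment is needed. This sidesteps precisely the ``coherence bookkeeping'' you flag as delicate---making a single automorphism of $\bbA$ work uniformly across all pairs---which, while doable, is genuine work. Your metric-segment observation (that in a discrete $L^1$-structure, $d(a,e)+d(e,b)=d(a,b)$ forces $e$ to be an a.e.\ gluing of $a$ and $b$) is correct and pleasant, but the paper's reduction is cleaner and automatically handles the degenerate-size edge cases you mention.
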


The proof of this result relies on an analysis of names of elements in $M$---or rather, of its completion in the generic extension---reminiscent of \cite{jech1985abstract, vaccaro2017generic}, and combines it with the theory of randomizations as defined by Ben Yaacov and Kiesler (randomizations of discrete structures were first introduced in \cite{keisler1999randomizing, benyaacov2009randomizations} and the definition was later extended to metric structures in \cite{benyaacov2013theories}). More precisely, we show that  the set of names of elements in a structure $M$ is isomorphic to the randomization of $M$ (see Proposition \ref{prop:random}).

As an application, we answer \cite[Question~12.9]{benyaacov2024extremal}, which asks whether there are a simplicial theory~$T$ and non-isomorphic extremal models $M$ and $N$ of $T$ such that $L^1(\Omega,M)$ and $L^1(\Omega,N)$ are isomorphic. 
Since a complete, single-sorted, discrete theory from classical first order logic (when suitably interpreted as an affine theory) is simplicial and its discrete models are extremal (\cite[\S 26]{benyaacov2024extremal}), Corollary~\ref{C1} answers this question in the affirmative. 

\begin{coro}%[Corollary \ref{cor:isomorphic_randomizations}]
\label{C1}
	Consider the non-isomorphic linear orders $M \coloneqq  (\bbR,<)$ and  $N \coloneqq (\bbR\setminus \{0\}, <)$ as metric structures with the discrete metric. Then $L^1([0,1], M) \cong L^1([0,1], N)$, where $[0,1]$ is interpreted as a probability measure space with the Le\-besgue measure.
\end{coro}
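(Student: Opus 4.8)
The plan is to deduce the statement from Theorem~\ref{T.isomorphic_randomizations}. Since a complete single‑sorted discrete theory is simplicial and its discrete models are extremal, the orders $M$ and $N$ (carrying the discrete metric) satisfy the hypotheses of that theorem, so it suffices to show that $\bbA$, the Lebesgue measure algebra of $[0,1]$, forces the completions of $M$ and $N$ to be isomorphic. As $M$ and $N$ are already complete (discrete metric), passing to the $\bbA$‑generic extension $V[G]$ only reinterprets their underlying sets: an element of $L^1([0,1],M)$ is represented by an essentially countably‑valued measurable map $[0,1]\to M$, and its value along the generic is one of the countably many values it assumes, i.e.\ a ground‑model element of $M$; so in $V[G]$ the completion of $M$ is $(\bbR^V,<)$ and that of $N$ is $(\bbR^V\setminus\{0\},<)$, where $\bbR^V$ denotes the set of ground‑model reals. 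Thus the task is to prove $V[G]\models(\bbR^V,<)\cong(\bbR^V\setminus\{0\},<)$.

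First I would reduce this to a single statement about $(\bbR^V,<)$. Fixing, in the ground model, order isomorphisms $(\bbR,<)\cong((-\infty,0),<)$ and $(\bbR,<)\cong((0,\infty),<)$ gives $(\bbR^V\setminus\{0\},<)\cong(\bbR^V,<)+(\bbR^V,<)$ \emph{already in $V$}, where $+$ is the ordered sum. On the other hand, since $\bbA$ is atomless it forces the existence of a real $s\notin\bbR^V$, and then $(\bbR^V,<)=\bigl(\bbR^V\cap(-\infty,s)\bigr)+\bigl(\bbR^V\cap(s,\infty)\bigr)$ precisely because $s$ is not a ground‑model real. Hence it is enough to prove that $\bbA$ forces $\bbR^V\cap(-\infty,s)\cong\bbR^V$, and, by the symmetric argument, $\bbR^V\cap(s,\infty)\cong\bbR^V$. (It is essential to route the argument through a cut at a \emph{new} real $s$: a naive back‑and‑forth directly between $\bbR^V$ and $\bbR^V\setminus\{0\}$ would fail, because the unique ``missing'' cut of $\bbR^V\setminus\{0\}$ is filled by the old real $0$, whereas every missing cut of $\bbR^V$ is filled by a new real, so the two orders have visibly different cut patterns before this reduction.)

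The heart of the proof is a back‑and‑forth construction carried out in $V[G]$ showing $\bbR^V\cap(-\infty,s)\cong\bbR^V$. Both are dense linear orders without endpoints, with $\bbQ$ as a countable dense subset, and of the same cardinality $\mathfrak c^V$ (cardinals are preserved since $\bbA$ is ccc); their Dedekind completions computed in $V[G]$ are order isomorphic, and in each the cut points that are realized in the completion but not in the order form, locally in every subinterval, the dense set of new reals $\bbR^{V[G]}\setminus\bbR^V$ (which still has size $\mathfrak c^V$), the sole discrepancy being that the top end of $\bbR^V$ corresponds to $+\infty$ while that of $\bbR^V\cap(-\infty,s)$ corresponds to the new real $s$. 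I would build the isomorphism as an increasing union of length $\mathfrak c^V$ of countable partial isomorphisms, maintaining the invariant that the induced isomorphism of Dedekind completions sends ground‑model cut points to ground‑model cut points, new‑real cut points to new‑real cut points, genuine gaps to genuine gaps, and matches up the respective top (and bottom) ends. At successor stages one enlarges domain and range by one more point of a fixed enumeration and also ``commits'' one more cut point from a fixed enumeration of the completions, using density of $\bbQ$ to extend by old reals and density together with abundance of the new reals to realize cuts of new‑real type.

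The step I expect to be the main obstacle is the bookkeeping needed to keep this invariant alive across limit stages: a priori, a sequence of ground‑model reals that converges in $V[G]$ to a new real could be sent to a sequence converging to an old real, breaking the invariant. The remedy is to interleave into the construction explicit choices of targets for the enumerated cut points of the completions, so that every new‑real cut is matched to a new‑real cut \emph{before} its two sides have been fully accumulated; here the homogeneity of the measure algebra $\bbA$ is exactly what guarantees that the new reals are dense inside every interval of $\bbR^{V[G]}$, so that such targets always exist. Granting this, the union of the partial isomorphisms is the required order isomorphism, and Corollary~\ref{C1} follows.
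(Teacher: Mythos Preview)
Your overall strategy matches the paper's: reduce via Theorem~\ref{T.isomorphic_randomizations} (or rather its easy direction, Theorem~\ref{T.extension_to_random}) to showing that the Lebesgue algebra forces $(\bbR^V,<)\cong(\bbR^V\setminus\{0\},<)$. The paper simply cites \cite[Example~5.1]{laskowski1996forcing} for this; you try to prove it, and your reduction to the statement $\bbR^V\cap(-\infty,s)\cong\bbR^V$ for a new real $s$ is correct and natural.

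The gap is in your proposed back-and-forth of length $\mathfrak c^V$. The bookkeeping you describe cannot work as stated: once the domain of the partial isomorphism contains any countable dense set (which is unavoidable after $\omega$ many steps of an ordinary back-and-forth), its unique continuous extension to the completion already determines the image of \emph{every} cut, while you have only ``committed'' to countably many of them. There is then no mechanism preventing an old-real cut from being sent to a new-real cut, and density of new reals in every interval is irrelevant at that point because the target is already forced. The phrase ``increasing union of length $\mathfrak c^V$ of countable partial isomorphisms'' also cannot be taken literally, since the union becomes uncountable at stage $\omega_1$. In short, the back-and-forth is circular: it succeeds precisely when a type-preserving automorphism of the completion already exists, which is what you are trying to prove.

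The fix is elementary and avoids back-and-forth entirely. Choose old reals $a_0<a_1<\cdots$ with $a_n\nearrow s$; since each $a_n\in V$, ground-model order-isomorphisms give $\bbR^V\cap(-\infty,a_0]\cong\bbR^V+1$ and $\bbR^V\cap(a_n,a_{n+1}]\cong\bbR^V+1$ for every $n$, whence
\[
\bbR^V\cap(-\infty,s)\ \cong\ \textstyle\sum_{n<\omega}(\bbR^V+1).
\]
Running the same decomposition for $\bbR^V\cap(-\infty,0)$ with old reals $b_n\nearrow 0$ yields the same order type, and $\bbR^V\cap(-\infty,0)\cong\bbR^V$ by a ground-model isomorphism. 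This gives $\bbR^V\cap(-\infty,s)\cong\bbR^V$ and, symmetrically, $\bbR^V\cap(s,\infty)\cong\bbR^V$; combined with your reduction, this completes the proof.
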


The second part of the paper focuses instead on tensorial primeness of tracial von Neumann algebras.
A \cstar-algebra is \emph{tensorially prime} if it is not a tensor product of two infinite-dimensional \cstar-algebras. A  nontrivial ultrapower of a \cstar-algebra is tensorially prime and the same holds for coronas of $\sigma$-unital \cstar-algebras and all other countably degree-1 saturated \cstar-algebras (\cite{ghasemi2015isomorphisms}, see also \cite[\S 15.4]{Fa:STCstar}). 
 In case of tracial von Neumann algebras, we will argue that the following is the correct definition. 

\begin{defn}\label{Def.prime}
 A tracial von Neumann algebra $(M, \tau)$ is \emph{tensorially indecomposable} (or \emph{tensorially prime}) if it is not isomorphic to a tensor product of two von Neumann algebras whose unit balls are not compact in the 2-norm induced by $\tau$.
\end{defn}

In case of \cstar-algebras, having a unit ball which is not compact in the operator topology is equivalent to being infinite-dimensional. However,  
by Example~\ref{Ex.Prime} below,  if $M$ is a type II$_1$ tracial von Neumann algebra with diffuse center, then its ultrapower $M^{\cU}$ absorbs $\ell^\infty(\bbN)$ tensorially (our proof uses the Continuum Hypothesis, but we are under the impression, with no evidence to support it,  that this assumption is unnecessary).

Ultrapowers of II$_1$ factors, as well as the relative commutant of the hyperfinite factor $R$ in its ultrapower, $R^{\cU}\cap R'$,  are tensorially prime. 
 This result was first published in \cite[Theorem~4.5]{fang2006central}, using a key result coming from \cite{popa1983orthogonal}.
 A condition sufficient for tensorial primeness of a II$_1$ factor $M$ was isolated in \cite{popa2022topics} and   \cite[Theorem~1.4]{hiatt2024singular}---namely that any two copies of $L^\infty[0,1]$ are unitarily equivalent---and another sufficient condition was isolated in  \cite[Proposition~3.16]{gao2024conjugacy}---that any two Haar unitaries are unitarily equivalent.   The latter condition easily implies the former, and it is not known whether the two conditions are equivalent (see \cite[Remark~1.8]{hiatt2024singular}). 
Tensorial primeness of ultrapowers of II$_1$ factors begs the question which ultrapowers of diffuse tracial von Neumann algebras and relative commutants in such ultrapowers are tensorially prime. The following is proved after Theorem~\ref{T.CountablySaturated}.

\begin{thm}\label{T.UltrapowerTensoriallyIndecomposable}
	Suppose that $M$ is an ultraproduct of tracial von Neumann algebras associated with a nonprincipal ultrafilter on $\bbN$ . If $M$ is not type I, then it is tensorially prime. Moreover, if $N$ is a \wstar-subalgebra of $M$ with separable predual, and $M\cap N'$  is not type I, then it is tensorially prime. 
	
	On the other hand, if $N$ is a tracial, diffuse von Neumann algebra of type I, and~$\cU$ is a nonprincipal ultrafilter over $\bbN$, then $N^\cU$ is not tensorially prime.
\end{thm}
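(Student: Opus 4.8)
The plan is to treat the three assertions separately, building on Theorem~\ref{T.CountablySaturated} and the countable saturation properties of ultraproducts.

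\medskip

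\textbf{First assertion (primeness of $M$).} Suppose toward a contradiction that $M \cong P \bar\otimes Q$ with both $P$ and $Q$ having non-compact unit ball in their respective $2$-norms (equivalently, neither is finite-dimensional as a direct sum of matrix algebras; in the tracial setting this means neither $P$ nor $Q$ is `small'). Since $M$ is an ultraproduct of tracial von Neumann algebras over a nonprincipal ultrafilter on $\bbN$, it is countably saturated (in the appropriate continuous-logic sense). The heart of the argument, which I expect is the content of Theorem~\ref{T.CountablySaturated}, is that a countably saturated tracial von Neumann algebra cannot nontrivially decompose as a tensor product unless one factor is type~I with atomic center (i.e.\ has compact unit ball): one uses a saturation argument à la Ghasemi (\cite{ghasemi2015isomorphisms}), running a back-and-forth/diagonalization to produce, inside $P$ and $Q$ simultaneously, sequences of partial isometries or Haar unitaries witnessing an incompressible `$\ell^\infty$-like' behavior that cannot be reconciled with the factorization. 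Concretely, from non-compactness of the unit balls one extracts in each tensor factor an infinite sequence of orthogonal projections of uniformly bounded-below trace; countable saturation then lets one realize a type over the whole of $M$ that is inconsistent with $M$ being isomorphic to $P\bar\otimes Q$ (the two `coordinates' interfere). Since $M$ is not type~I by hypothesis, the exceptional case is excluded and we conclude $M$ is prime.

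\medskip

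\textbf{Second assertion (relative commutants).} Let $N \subseteq M$ be a \wstar-subalgebra with separable predual and suppose $M \cap N'$ is not type~I. The point is that the relative commutant of a separably-generated subalgebra in a countably saturated structure is itself countably saturated: one can code any countable type over $M\cap N'$ as a countable type over $M$ with finitely many extra parameters enumerating a generating set of $N$ together with the commutation conditions $[x, n_k] = 0$, and realize it by saturation of $M$. Hence $M \cap N'$ falls under the hypotheses of the first assertion (or directly under Theorem~\ref{T.CountablySaturated}), and not being type~I it is prime. I would isolate the `relative commutants are countably saturated' statement as the main lemma here, as it is exactly the mechanism that makes the argument uniform.

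\medskip

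\textbf{Third assertion ($N$ diffuse type~I $\Rightarrow N^\cU$ not prime).} Here we construct the decomposition explicitly. If $N$ is tracial, diffuse, and type~I, then by the structure theory of type~I von Neumann algebras $N \cong \bigoplus_n (A_n \bar\otimes B(H_n))$ with each $A_n$ diffuse abelian (or, after absorbing, $N$ contains a diffuse abelian von Neumann subalgebra with expectation, so WLOG $N = L^\infty(X,\nu) \bar\otimes B$ with $B$ a fixed matrix algebra or $\mathbbm{C}$, and diffuseness forces the $L^\infty(X,\nu)$ factor to be infinite-dimensional). The key observation is that $L^\infty(X,\nu)^\cU$ is a diffuse abelian von Neumann algebra whose unit ball is not compact in the $2$-norm, and one shows $N^\cU \cong (L^\infty(X,\nu))^\cU \bar\otimes B^\cU$ --- more to the point, one produces two \wstar-subalgebras of $N^\cU$, each with non-compact unit ball, that are mutually commuting and generate $N^\cU$ with the correct trace, by taking ultraproducts of a fixed tensor splitting $N = N_1 \bar\otimes N_2$ with $N_1, N_2$ both infinite-dimensional (possible precisely because $N$ is diffuse: split the abelian part $L^\infty[0,1] \cong L^\infty[0,1]\bar\otimes L^\infty[0,1]$). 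Ultraproducts commute with such finite tensor decompositions in the sense needed (the inclusion $N_1^\cU \bar\otimes N_2^\cU \hookrightarrow N^\cU$ has image a \wstar-subalgebra, and a trace computation on the weak-$*$-dense $*$-subalgebra shows the trace is preserved), giving $N^\cU = N_1^\cU \bar\otimes N_2^\cU$ with both factors having non-compact unit balls; hence $N^\cU$ is not prime in the sense of Definition~\ref{Def.prime}.

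\medskip

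\textbf{Main obstacle.} The delicate point is the first assertion: making the saturation argument actually force a contradiction from an arbitrary tensor factorization. One must verify that `non-compact unit ball in the $2$-norm' is captured by a countable family of conditions that saturation can exploit across a tensor product --- i.e.\ that the diffuse/non-atomic behavior in one factor, when tensored against a separable subalgebra on the other side, produces a genuinely unrealizable type in $M$. This is presumably where Theorem~\ref{T.CountablySaturated} does the real work, and the proof of the present theorem is mostly the bookkeeping of reducing to it, checking the non-type-I hypothesis is the exact obstruction to the exceptional atomic case, and handling the relative-commutant reduction.
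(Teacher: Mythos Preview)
Your reduction of the first two assertions to Theorem~\ref{T.CountablySaturated} via countable saturation of ultraproducts and of relative commutants of separable subalgebras is correct and is exactly what the paper does. (A minor point: the paper works with countable \emph{quantifier-free} saturation throughout, which is all that is known for relative commutants; your sketch of why relative commutants inherit saturation is right in spirit but only yields the quantifier-free version.) Your speculation about the internal mechanism of Theorem~\ref{T.CountablySaturated} is, however, off: there is no Ghasemi-style $\ell^\infty$-incompressibility argument. Instead the paper shows that in a countably qf-saturated type~II$_1$ algebra any two uniformly Haar unitaries are conjugate (Lemma~\ref{L.CHaar}), and then contradicts a tensor decomposition either via Popa's orthogonality result \cite{popa1983orthogonal} when both factors are type~II$_1$ (Proposition~\ref{P.CHaar}) or via a forcing argument with the measure algebra of a diffuse abelian factor (Proposition~\ref{L.qf-saturated}); a forcing-free alternative appears in the appendix.

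Your argument for the third assertion has a genuine gap. You claim that from $N = N_1 \bar\otimes N_2$ one gets $N^\cU = N_1^\cU \bar\otimes N_2^\cU$ via the canonical embedding, but this embedding is not surjective even for $N_1 = N_2 = L^\infty[0,1]$: an element of $(L^\infty([0,1]^2))^\cU$ represented by a sequence $(f_n)$ whose best rank-$K$ tensor approximation error in $L^2$ stays bounded away from zero for every fixed $K$ (for instance, indicators of suitably random subsets of an $n\times n$ grid) lies outside the image. Trace preservation gives only that the embedding is isometric, not onto. The paper instead observes that $N^\cU$ remains type~I (the diagonal images of the central type-I$_n$ projections of $N$ still sum to $1$ in $N^\cU$) and then applies the second half of Theorem~\ref{T.CountablySaturated}, which uses Maharam's theorem to show that any diffuse, countably qf-saturated, type~I tracial von Neumann algebra absorbs $L^\infty[0,1]$ tensorially.
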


%If $A$ is an abelian \wstar-subalgebra of $M^{\cU}$ then $M^\cU\cap A'$ is of type II$_1$ but not a factor, and it is by Theorem~\ref{T.UltrapowerTensoriallyIndecomposable} prime. 

\begin{coro}\label{T.ultrapower}
	Suppose that $M$ is a II$_1$ factor and $\cU$ is a nonprincipal ultrafilter over~$\bbN$. Then $(L^\infty[0,1]\bar\otimes M)^\cU$ is not isomorphic to $L^\infty(\Omega, \mu )\bar \otimes N$ for any type II$_1$ tracial von Neumann algebra $N$ and any probability measure space $(\Omega, \mu)$.
\end{coro}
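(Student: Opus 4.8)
The plan is to deduce the statement from the primeness in Theorem~\ref{T.UltrapowerTensoriallyIndecomposable}, arguing throughout via non-compactness of unit balls rather than via a center-matching argument, so that no factoriality of $N$ is used. Put $P:=L^\infty[0,1]\bar\otimes M$. Since $M$ is a II$_1$ factor, $P$ is type II$_1$, hence not type I, and the same is true of its ultrapower $P^\cU$ (it contains $P$ as a subalgebra with trace-preserving expectation, has projections of every trace, and no abelian projections). Theorem~\ref{T.UltrapowerTensoriallyIndecomposable} therefore shows $P^\cU$ is prime in the sense of Definition~\ref{Def.prime}. Assume toward a contradiction that $\Phi\colon P^\cU\to L^\infty(\Omega,\mu)\bar\otimes N$ is an isomorphism with $N$ of type II$_1$; I will show this either presents $P^\cU$ as a tensor product forbidden by primeness, or forces an incompatibility between the centers.

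First I would treat the case where $L^\infty(\Omega,\mu)$ is diffuse (in particular, $(\Omega,\mu)$ atomless), which is the setting of interest. A diffuse abelian tracial algebra contains a sequence of Haar unitaries---for instance the characters $t\mapsto e^{2\pi i n t}$ inside a copy of $L^\infty[0,1]$---which are orthonormal in the $2$-norm, so its unit ball is not totally bounded, hence not compact. Every type II$_1$ algebra $N$ likewise contains a diffuse abelian subalgebra and therefore has non-compact unit ball, \emph{whether or not $N$ is a factor}. Consequently $L^\infty(\Omega,\mu)\bar\otimes N$ is a tensor product of two algebras with non-compact unit balls, contradicting the primeness of $P^\cU$. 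This step replaces the center-matching argument of the factor-only version and is exactly what removes the factoriality assumption on $N$.

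The remaining difficulty---and the main obstacle---is a purely atomic $(\Omega,\mu)$, where $L^\infty(\Omega,\mu)\cong\ell^\infty(I)$ with $I$ countable has compact unit ball (a truncation using $\sum_{i\in I}\mu(\{i\})=1$ gives total boundedness), so primeness alone yields nothing. Here I would pass to the center. As $L^\infty[0,1]$ is central in $P$, the diffuse algebra $(L^\infty[0,1])^\cU$ lies in $Z(P^\cU)$; if $e\in Z(P^\cU)$ were a minimal projection, then minimality would carry the central subalgebra $(L^\infty[0,1])^\cU$ into $\bbC e$, making $a\mapsto\tau(ae)/\tau(e)$ a normal character on a diffuse abelian algebra, which is impossible. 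Hence $Z(P^\cU)$ is diffuse, and it is non-separable because $(L^\infty[0,1])^\cU$ already is. On the other hand $Z(L^\infty(\Omega,\mu)\bar\otimes N)=\ell^\infty(I)\bar\otimes Z(N)$; matching diffuseness forces $Z(N)$ diffuse, and matching density characters (with $I$ countable) forces $Z(N)$, hence $N$, to be non-separable. Under the standing convention that the tracial von Neumann algebras in play have separable predual this is the desired contradiction, and the proof is complete. I would flag that some such separability hypothesis on $N$ is genuinely needed in the purely atomic regime: for the one-point space and $N:=P^\cU$ the two sides literally coincide, so the essential content of the corollary lives in the diffuse case handled by primeness above.
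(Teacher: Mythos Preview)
Your route is valid but sits one level above the paper's. The paper simply says the corollary is implied by Proposition~\ref{L.qf-saturated}: $(L^\infty[0,1]\bar\otimes M)^\cU$ is countably quantifier-free saturated, while $L^\infty(\Omega,\mu)\bar\otimes N$ is not when $(\Omega,\mu)$ is diffuse and $N$ is type II$_1$. You instead invoke primeness via Theorem~\ref{T.UltrapowerTensoriallyIndecomposable}, which is itself proved using Proposition~\ref{L.qf-saturated} together with Proposition~\ref{P.CHaar}. Both arguments are correct for diffuse $(\Omega,\mu)$; the paper's is more direct, yours packages the conclusion through Definition~\ref{Def.prime}.

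One small gap: your dichotomy ``diffuse'' versus ``purely atomic'' skips the mixed case. This is harmless---any diffuse part already makes the unit ball of $L^\infty(\Omega,\mu)$ non-compact, so your primeness argument goes through; alternatively, pass to the corner over the diffuse part and apply Proposition~\ref{L.qf-saturated} there (corners of countably quantifier-free saturated algebras are countably quantifier-free saturated).

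Your treatment of the purely atomic case is sharper than the paper's. The one-point $\Omega$ with $N:=P^\cU$ indeed satisfies the hypotheses literally and falsifies the conclusion, so the corollary as stated is too strong; the paper's one-line deduction from Proposition~\ref{L.qf-saturated} only covers diffuse $(\Omega,\mu)$ and says nothing here. However, the ``standing convention'' of separable predual you appeal to does not exist in this paper---nonseparable structures are the point throughout---so your center/density-character argument does not close the gap under the paper's hypotheses. The honest conclusion is the one you reach at the end: the content of the corollary is the diffuse case, and the purely atomic case requires an extra hypothesis (diffuse $(\Omega,\mu)$, or $N$ with separable predual) that the statement omits.
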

		
		Our proofs use set-theoretic forcing, and in fact they rely on the same analysis of names that yields Theorem \ref{T.isomorphic_randomizations}. Immediately upon seeing the first draft of our paper, Adrian Ioana provided a proof of Theorem~\ref{T.UltrapowerTensoriallyIndecomposable} (and therefore of Corollary~\ref{T.ultrapower}) that uses only the standard techniques from Popa's deformation and rigidity theory. This proof is included in the final appendix with his kind permission. 
		
		On the other hand, the proof of Corollary~\ref{C1} via Theorem~\ref{T.isomorphic_randomizations} and forcing gives, in our opinion, the most natural and most enlightening approach to this problem. 
		
		\subsection*{Summary of the paper} Section \ref{S.preliminaries} focuses on preliminaries. In Section \ref{S.names} we study the class $M^\bbA$ of $\bbA$-names, for $\bbA$ a measure algebra, for (the completion of) a metric structure $M$ in a language $\cL$. We show that $M^\bbA$, or rather a natural quotient of it, can be interpreted as a structure  isomorphic to $L^1(\Omega, M)$---with $(\Omega ,\mu)$ being the probability space corresponding to $\bbA$---when both are interpreted in the randomization language $\cLR$ (whose definition is recalled in \S\ref{S.random}). This correspondence is essential to prove Theorem \ref{T.isomorphic_randomizations}, whose proof is given in Section \ref{S.X->L1}. Section \ref{S.vNa} contains the proof of Theorem \ref{T.UltrapowerTensoriallyIndecomposable}, and  Section \ref{S.Concluding} is devoted to final remarks. Appendix \S\ref{S.def/rg} contains Ioana's proof of tensorial primeness of ultrapowers of type II$_1$ von Neumann algebras. Readers interested only in von Neumann algebras can go straight to Section~\ref{S.vNa}, skip its parts that use forcing, and move on to Appendix \ref{S.def/rg}. 
		
	\subsection*{Acknowledgments}  We thank Chris Laskowski and Menachem Magidor for turning our attention to \cite[Example 5.1]{laskowski1996forcing} on which the proof of Corollary~\ref{C1} is based. 
	IF would also like to thank the attentive audience of the seminar given in May 2025 at KU Leuven, and Stefaan Vaes in particular, at which the results of this paper were first presented. AV would finally like to thank Soham Chakraborty and Srivatsav Kunnawalkam Elayavalli for their prompt and useful insights.
Finally, we would like to thank Tom\' as Ibarlucia, Adrian Ioana, David Jekel, Jennifer Pi, Sorin Popa for helpful remarks on the first draft of this paper and to the anonymous referee for a helpful report. Following the referee's advice, we used ChatGPT for proofreading revised version of the paper.

\section{Preliminaries} \label{S.preliminaries}

\subsection{Measure algebras} \label{S.MeasureAlgebras} A \emph{measure algebra} consists of a $\sigma$-complete Boolean algebra $B$ paired with a strictly positive probability measure $\mu$, namely a function $\mu \colon B \to \bbR$ such that
\begin{enumerate}
\item $\mu(0_B) = 0$,
\item $\mu(a) > 0$ for all $a \in B \setminus \{ 0 \}$,
\item $\mu \left(\sum_{n\in \bbN} a_n\right) = \sum_{n\in \bbN} \mu (a_n)$ for all sequences of pairwise disjoint elements $(a_n)_{n\in \bbN}$ in $B$,
\item $\mu(1_B) = 1$.
\end{enumerate}

Given a probability measure space $(\Omega, \Sigma, \mu)$, which we often denote $(\Omega, \mu)$ omitting the $\sigma$-algebra $\Sigma$, its measure algebra $\bbA$ is the quotient of $\Sigma$ by the null ideal $\{ A \in \Sigma : \mu (A) = 0 \}$, paired with measure induced by $\mu$ (which, with a slight abuse of notation, we still denote $\mu$). Note that every measure algebra is the measure algebra of some probability measure space (see e.g. \cite[\S 321J]{Fr:MT3}).

A central fact in the theory of measure algebras is Maharam's theorem (\cite{maharam},  see also \cite[\S 331]{Fr:MT3}), stating that the isomorphism class of a homogeneous infinite measure algebra is determined by its weight.
We briefly recall this fundamental result, used repeatedly in the following sections (see \cite[Chapters 31, 32]{Fr:MT3} for a thorough introduction). 

Given a measure algebra $\bbA$, its \emph{weight} is the least size of a subset $B \subseteq \bbA$ such that the $\sigma$-algebra generated by $B$ is $\bbA$ itself. This coincides with the density character of $\bbA$ when considered with respect to the natural metric $d(a,b)\coloneqq\mu(a\Delta b)$. For a measure algebra $\bbA$ and a positive set $a\in \bbA$ we can consider the measure algebra $\bbA_a\coloneqq \{b\in \bbA\mid b\leq a\}$ with respect to the normalized measure $\mu_a(b)=\mu(b)/\mu(a)$.  A measure algebra $\bbA$ is \emph{(Maharam) homogeneous} if the weight of $\bbA_a$ is equal to the weight of $\bbA$ for all positive $a\in \bbA$.

Maharam's theorem asserts that, for every infinite cardinal $\kappa$, the measure algebra of $\{0,1\}^\kappa$, equipped with the product measure $\mu_\kappa$, is the unique, up to measure-preserving isomorphism,  homogeneous algebra of weight~$\kappa$. Since an elementary argument shows that every measure algebra is a direct sum of at most countably many homogeneous  measure algebras, Maharam's theorem yields a complete classification of diffuse (probability) measure algebras as a corollary. More precisely, for every $\bbA$ there is a countable set $\mathcal{I}$, some reals $0<r_i\leq 1$ such that $\sum_{i \in \mathcal{I}} r_i=1$, infinite cardinals~$(\kappa_i)_{i \in \mathcal{I}}$, and a family $(a_i)_{i \in \mathcal{I}}$ of pairwise disjoint elements in $\bbA$ whose union has full measure, such that $\mu(a_i)=r_i$, $\bbA_{a_i}$ is homogeneous of weight $\kappa_i$ and $\bbA = \bigoplus_{i \in \mathcal{I}} \bbA_{a_i}$.
This immediately implies the following. 

\begin{proposition}\label{P.Abelian-tvNa}
Let $A$ be an abelian von Neumann algebra. There is a sequence of cardinals $(\kappa_n)_{n \in \bbN}$ and a sequence of pairwise orthogonal projections $(p_n)_{n \in \bbN}$ in~$A$ such that $A = \bigoplus_{n \in \bbN} p_n A p_n$ and such that each corner $p_n A p_n$ is isomorphic to $L^\infty( \{ 0,1 \}^{\kappa_n}, \mu_{\kappa_n})$, where $(\{ 0,1 \}^{\kappa_n}, \mu_{\kappa_n})$ is the homogeneous measure algebra on $\{0,1\}^{\kappa_n}$ with the product measure. The sequence $(\kappa_n)_{n \in \bbN}$ can be eventually zero.
\end{proposition}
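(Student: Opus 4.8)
The plan is to read the statement off directly from the Maharam decomposition of probability measure algebras recalled above, translated through the standard identification of an abelian von Neumann algebra with an $L^\infty$-space; the content is essentially bookkeeping, so I will keep the plan brief.

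First I would recall that an abelian von Neumann algebra $A$ --- countably decomposable in every application in this paper, e.g.\ because it is the center of a tracial von Neumann algebra and so carries a faithful normal state --- is $*$-isomorphic to $L^\infty(\bbA)$, where $\bbA$ is the Boolean algebra of projections of $A$ made into a probability measure algebra by the restriction of such a state; the choice and normalization of the state are immaterial, since $L^\infty(\bbA)$ depends on $\bbA$ only through its null ideal. Applying to $\bbA$ the decomposition recalled above, I obtain a countable set $\mathcal I$, reals $r_i > 0$ with $\sum_{i\in\mathcal I} r_i = 1$, cardinals $\kappa_i$ each either infinite or $0$ (the value $0$ accounting for the atoms of $\bbA$: a weight-$0$ homogeneous measure algebra is that of a one-point probability space, equivalently of $\{0,1\}^0$, whose $L^\infty$ is $\bbC$), and pairwise disjoint $a_i \in \bbA$ whose union has full measure, with $\mu(a_i) = r_i$, such that each $\bbA_{a_i}$ is measure-preservingly isomorphic to $(\{0,1\}^{\kappa_i}, \mu_{\kappa_i})$ and $\bbA = \bigoplus_{i\in\mathcal I}\bbA_{a_i}$.

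I then transport this back through $L^\infty$. Letting $p_i \in A$ be the projection corresponding to $a_i$, the $p_i$ are pairwise orthogonal with $\sum_{i\in\mathcal I} p_i = 1_A$, and, $\mathcal I$ being countable, $A = \bigoplus_{i\in\mathcal I} p_i A p_i$ as a von Neumann algebra direct sum; the only thing to verify here is that a bounded measurable function on the disjoint union of the underlying spaces is precisely a uniformly bounded family of bounded measurable functions on the pieces. Moreover $p_i A p_i \cong L^\infty(\bbA_{a_i}) \cong L^\infty(\{0,1\}^{\kappa_i},\mu_{\kappa_i})$, again because $L^\infty$ is insensitive to renormalizing the measure on $\bbA_{a_i}$. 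Finally, re-indexing $\mathcal I$ by $\bbN$ --- bijectively when $\mathcal I$ is infinite, and by appending zero projections with $\kappa_n = 0$ when $\mathcal I$ is finite --- yields the asserted sequences $(p_n)_{n\in\bbN}$ and $(\kappa_n)_{n\in\bbN}$; the sequence $(\kappa_n)$ is eventually $0$ precisely when $A$ has only finitely many diffuse homogeneous direct summands, which accounts for the ``eventually zero'' clause of the statement.

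I do not anticipate any real obstacle; the proposition is an immediate corollary of Maharam's theorem, obtained by running the dictionary between measure algebras and $L^\infty$-spaces. The two points deserving a line of care are: \emph{(i)} that the abelian von Neumann algebra at hand is genuinely a probability measure algebra, i.e.\ countably decomposable --- automatic in every use in this paper, but worth bearing in mind given the unqualified hypothesis, since e.g.\ $\ell^\infty(\kappa)$ for uncountable $\kappa$ admits no such decomposition; and \emph{(ii)} the routine compatibility of $L^\infty$ with countable direct sums of measure algebras and with renormalizing the measure on a corner.
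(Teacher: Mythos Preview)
Your proposal is correct and matches the paper's approach exactly: the paper does not give a separate proof but simply states that the proposition follows immediately from Maharam's theorem as recalled in the preceding paragraph. Your observation about countable decomposability is a fair caveat on the unqualified hypothesis, though irrelevant in the paper's applications.
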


\subsection{Metric structures and $L^1$-structures} %Readers interested only in tracial von Neumann algebras and not in general affine structures can skip ahead to \S\ref{S.Forcing}.  

Languages and metric structures are considered in the sense of \cite{benyaacov2008model} and \cite{benyaacov2024extremal}, and we refer to these works for the relevant background. Those references mainly focus on single-sorted structures, but the theory can be naturally adapted to multi-sorted structures (this is generally convenient when dealing with unbounded metric spaces such as $C^\ast$-algebras and tracial von Neumann algebras; see \cite{hart:continuous_mt} for a thorough introduction).

To streamline the exposition, we restrict ourselves to single-sorted metric structures, with the proviso that everything we say also holds in the multi-sorted case. We shall also sometimes consider theories from classical first order logic and models of these theories. These discrete theories naturally give rise to theories in continuous logic (see the beginning of  \cite[\S 26]{benyaacov2024extremal}) which are called \emph{classical} theories, where all basic predicates are $\{0,1\}$-valued. Models of discrete theories then naturally become models of classical theories, after interpreting them---as we shall always do---as metric structures with the discrete metric.

The domain of a structure is always assumed to be $(M, d^M)$, where $M$ is a non-empty, bounded, complete metric space with distance $d^M$.
This restriction does not pose serious limitations when working with tracial von Neumann algebras $(M, \tau)$, which will always be identified with their closed unit balls and interpreted as metric structures $(M, \| \cdot \|_{1,\tau})$\footnote{Given a von Neumann algebra $M$ and a faithful trace $\tau$ on $M$, the 1-norm is defined by $\| a \|_{1, \tau} \coloneqq \tau((a^\ast a)^{1/2})$, for $a \in M$. While the standard axiomatization of tracial von Neumann algebras uses the $\|\cdot\|_{2,\tau}$-norm, the two norms are equivalent on operator norm-bounded  balls by \cite[Lemma~29.1]{benyaacov2024extremal}. The theory with respect to 1-norm fits, however, within the affine framework of \cite{benyaacov2024extremal}, making the methods therein applicable. The difference is of only syntactic nature.} in the language $\cL_{\text{TvN}}$ as in \cite[\S 29]{benyaacov2024extremal} (see also \cite[\S 2.3.2]{FaHaSh:Model2} for an equivalent presentation of tracial von Neumann algebras via multiple domains and with respect to the standard 2-norm $\|\cdot\|_{2,\tau}$). We moreover assume, again without loss of generality, that all predicates and formulas have range contained in the interval $[0,1]$ (in the case of von Neumann algebras this can be done by shifting and rescaling).

In this paper, we focus on the study of certain direct integrals $\int^\oplus_\Omega M_\omega \, d\mu(\omega)$ of metric structures as defined in \cite[\S 8]{benyaacov2024extremal},
%(see also \cite{keisler1999randomizing, benyaacov2009randomizations, benyaacov2013theories} for precursors of these definitions). We do so by relying on the theory and formalism developed in \cite[\S 8]{benyaacov2024extremal},
to which we refer for the precise definitions and a thorough presentation of this construction. 
%of direct integral $\int^\oplus_\Omega M_\omega \, d\mu(\omega)$ of a measurable field $( (M_\omega)_{\omega \in \Omega}, (e_i)_{i \in I})$ of metric structures over a probability space $(\Omega, \mu)$ with pointwise enumeration $(e_i)_{i \in I}$.
In case all $M_\omega$ are tracial von Neumann algebras with separable predual, this definition agrees with the standard definition as in \cite[Chapter IV]{Tak:TheoryI}. 

In this paper we exclusively deal with structures of the form $L^1(\Omega, \mu, M)$, abbreviated $L^1( \Omega, M)$, which correspond to integrals of a constant measurable field with $M_\omega = M$ for all $\omega \in \Omega$, and with \emph{pointwise enumeration} (in the sense of \cite[Definition 8.1.iii]{benyaacov2024extremal}) given by constant functions with values in a dense subset of $M$. We briefly recall their definition.

\begin{definition}[{\cite[Definition 8.15]{benyaacov2024extremal}}]\label{Def.L1}
Let $(\Omega, \mu)$ be a probability measure space. Given a metric structure $(M, d^M)$ in some language $\cL$, we can define a metric structure $(L^1(\Omega,M), d^{L^1(\Omega,M)})$ in $\cL$ as follows.
\begin{enumerate}
\item \emph{Simple sections} are functions $s \colon \Omega \to M$ such that there is a finite partition $A_0, \dots, A_{n-1}$ of $\Omega$ and $a_0, \dots, a_{n-1} \in M$ such that $s = \sum_{i < n} a_i \mathbf{1}_{A_i}$, where $a_i \mathbf{1}_{A_i} \colon A_i \to M$ is the constant function with value $a_i$. A \emph{measurable section} is a function $f \colon \Omega \to M$ that is the pointwise limit of simple sections for $\mu$-almost every $\omega\in\Omega$ (\cite[Lemma 8.5]{benyaacov2024extremal}).

\item Given two measurable sections $f, g \colon \Omega \to M$, the function from $\Omega$ into $\bbR$ that maps $\omega \mapsto d^M(f(\omega), g(\omega))$ is $\mu$-measurable (\cite[Lemma 8.4]{benyaacov2024extremal}). It is then possible to define a pseudometric on the set of all measurable sections as follows
\[
d^{L^1(\Omega, M)}(f,g) \coloneqq \int_\Omega d^M(f(\omega), g(\omega))\, d\mu(\omega).
\]

\item The set $L^1(\Omega,M)$ is the quotient of the set of all measurable sections, where two sections are identified if they are equal for $\mu$-almost every $\omega\in\Omega$. Then $(L^1(\Omega,M), d^{L^1(\Omega,M)})$ is a complete metric space in which the collection (of equivalence classes) of simple sections is dense (\cite[Proposition 8.8]{benyaacov2024extremal}).

\item \label{4.D.L.1}Given a $k$-ary predicate symbol $R$ in $\cL$ and a $k$-tuple $\bar f \coloneqq (f_0, \dots, f_{k-1})$ in $L^1(\Omega, M)$, the function $\omega \mapsto R^M(\bar f(\omega))$, where $\bar f(\omega)$ is an abbreviation for the $k$-tuple $(f_0(\omega), \dots, f_{k-1}(\omega))$, is $\mu$-measurable (\cite[Lemma 8.7]{benyaacov2024extremal}). We can then define the interpretation of $R$ as follows
\[
R^{L^1(\Omega, M)}(\bar f) \coloneqq \int_\Omega R^M(\bar f(\omega))\, d\mu(\omega).
\]
The interpretation of function and constant symbols in $\cL$ is defined similarly.
\end{enumerate}
\end{definition}

Note that, if $M$ is a tracial von Neumann algebra identified with its unit ball and interpreted as a metric structure in $\cL_{\text{TvN}}$, then the $\cL_{\text{TvN}}$-structure $L^1(\Omega, \mu, M)$ corresponds to (the closed unit ball of) $L^\infty(\Omega, \mu) \bar \otimes M$.

\subsection{Affine logic}\label{S.Affine}
The framework developed in \cite{benyaacov2024extremal} is that of \emph{affine} (continuous) logic: the definition of formulas is similar to that in continuous logic (as in, for instance, \cite{benyaacov2008model}) with the additional restriction that connectives are constrained to be functions $[0,1]^n \to [0,1]$ of the form $\bar x \mapsto f(\bar x) + b$ where $f$ is linear and $b \in \bbR$, and the moduli of continuity for symbols in the language are always required to be continuous and convex functions. All the usual model theoretic terminology (e.g. elementary equivalence, definable set, type, theory, etc.) has a natural interpretation in the affine setting. We recall only the notions that are strictly necessary for this paper, and refer the reader to \cite{benyaacov2024extremal} for all the missing details.

Fix a continuous language $\cL$ and denote by $\cL_{\bar x}$  the set of all formulas in the sense of continuous model theory (see e.g. \cite[\S 3]{benyaacov2008model}) and by $\Laffx$ the set of all affine formulas (as in \cite[\S 2]{benyaacov2024extremal})  in the language $\cL$ whose free variables are among those in tuple $\bar x$. Given a metric structure $M$ and $\phi(\bar x) \in \cL_{\bar x}$, we denote by $\phi^M \colon M^{| \bar x |} \to \bbR$ the interpretation of $\phi$ on $M$, where $| \bar x |$ is the length of the tuple $\bar x$. 

We say that a substructre $N \subseteq M$ is an \emph{affine substructure}  of $M$, $N \preceq^{\text{aff}} M$ in symbols, if $\phi^M(\bar a) = \phi^N(\bar a)$ for all $\phi(\bar x) \in \Laffx$ and $\bar a \in N^{| \bar x |}$. Similarly, $N$ is an \emph{elementary substructure} of $M$, which we abbreviate as $N \preceq M$, if the previous condition holds for all $\phi(\bar x) \in \cL_{\bar x}$. An isometric embedding $\Phi \colon N \to M$ between metric structures is \emph{affine} if $\Phi(N) \preceq^{\text{aff}} M$, and it is \emph{elementary} if $\Phi(N) \preceq M$. 

As in \cite[\S 3]{benyaacov2024extremal}, the set $\Laffx$ can be naturally given a structure of order unit space, and an \emph{affine type} in the variables $\bar x$ is a state on $\Laffx$. The space of all affine types in $\bar x$ is denoted $S_{\bar x}^{\text{aff}}(\cL)$, and it is a compact convex set. Given an affine theory $T$, one can define the equivalence relation $\equiv_T$ on $\Laffx(\cL)$ by setting $\phi(\bar x) \equiv_T \psi(\bar x)$ if and only if $\phi^M(\bar a) = \psi^M(\bar a)$ for every model $M$ of $T$ and all tuples $\bar a \in M^{| \bar x |}$. A \emph{type on $T$} is a state on $\Laffx (T) \coloneqq \Laffx/ \equiv_T$ and the set of types in $T$, denoted $S_{\bar x}^{\text{aff}}(T)$, is again a  compact convex set.

A theory $T$ is called \emph{simplicial}
if $S_{\bar x}^{\text{aff}}(T)$ is a Choquet simplex for every tuple of variables $\bar x$ (\cite[Definition 11.1]{benyaacov2024extremal}) and it is \emph{non-degenerate} if all of its models have at least two elements (in at least one sort, in the multi-sorted case; see \cite[Definition 8.23]{benyaacov2024extremal}). An example of a simplicial, non-degenerate theory is that of tracial von Neumann algebras, as described in \cite[\S 29]{benyaacov2024extremal}.

An affine type is \emph{extreme} if it is an extreme point of $S_{\bar x}^{\text{aff}}(\cL)$. Given an affine theory $T$, a metric structure $M$ is an \emph{extremal model} of $T$ if tuples in $M$ only realize extreme types. Looking again at the theory of tracial von Neumann algebras, extremal models correspond to tracial factors (see \cite[\S 29]{benyaacov2024extremal}). Recall furthermore that discrete models of a classical theory are extremal and that, if complete, a single-sorted classical theory is simplicial (\cite[\S 26]{benyaacov2024extremal}).

Given a metric structure $M$ and a probability space $(\Omega, \mu)$ one can define a function $\llbracket \cdot\rrbracket $ from the space of formulas, possibly with parameters in $L^1(\Omega, M)$, into the space of random variables $L^1(\Omega, [0,1])$ defined by
\begin{equation} \label{eq:boo}
\llbracket\varphi(\bar a)\rrbracket(\omega)\coloneqq\varphi^{M}(\bar a(\omega)), \quad \omega \in \Omega, \, \phi \in \cL_{\bar x}, \, \bar a \in L^1(\Omega, M)^{| \bar x |}.
\end{equation}
Indeed,  the map $\omega \mapsto \phi(\bar a(\omega))$ is $\mu$-measurable and $\llbracket\varphi(\bar a)\rrbracket \in L^1(\Omega, [0,1])$. The following analog of \L o\'s's theorem for affine logic, which we state only for $L^1$-structures, has been proved in \cite[Theorem 8.11]{benyaacov2024extremal} in the more general setting of direct integrals of metric structures.

\begin{theorem}[{\cite[Theorem 8.11]{benyaacov2024extremal}}] \label{thm:Los}
Let $M$ be a metric structure in some language $\cL$ and let $(\Omega,\mu)$ be a probability space. For every $\phi(\bar x) \in \Laffx$ and $ \bar f \in L^1(\Omega, M)^{| \bar x |}$, the function $\omega \mapsto \phi^{M}(\bar f(\omega))$ is $\mu$-measurable and
\[
\phi^{L^1(\Omega, M)}(\bar f) = \int_\Omega \phi^{M}(\bar f(\omega)) \, d \mu(\omega).
\]
\end{theorem}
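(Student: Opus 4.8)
\textbf{Proof proposal for Theorem~\ref{thm:Los}.}

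The plan is to first reduce the statement to the claim that, for every affine formula $\phi(\bar x)$ with bounded range in $[0,1]$ and every tuple $\bar f$ of simple sections, both the measurability of $\omega \mapsto \phi^M(\bar f(\omega))$ and the integral identity hold; and then to bootstrap from simple sections to arbitrary measurable sections using that simple sections are dense in $L^1(\Omega, M)$ (Definition~\ref{Def.L1}(3)) together with the convexity and (uniform) continuity properties of affine formulas. The base step at the level of simple sections is essentially a computation: if $\bar f = \sum_{i<n} \bar a_i \mathbf 1_{A_i}$ on a common refining partition $A_0,\dots,A_{n-1}$ of $\Omega$, then $\phi^M(\bar f(\omega)) = \sum_{i<n} \phi^M(\bar a_i)\mathbf 1_{A_i}(\omega)$ is a simple real-valued function, hence measurable, and $\int_\Omega \phi^M(\bar f(\omega))\, d\mu(\omega) = \sum_{i<n}\mu(A_i)\phi^M(\bar a_i)$, which we must identify with $\phi^{L^1(\Omega,M)}(\bar f)$.

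For this identification I would argue by induction on the structure of the affine formula $\phi$. For atomic formulas $R(t_0,\dots,t_{k-1})$ the statement is exactly the definition of the interpretation of predicate and function symbols on $L^1(\Omega, M)$ given in Definition~\ref{Def.L1}(\ref{4.D.L.1}) (one first checks the analogous pointwise/integral behaviour for terms, again by induction on term complexity, using that function symbols are interpreted pointwise and that $L^1(\Omega,M)$ is built as an $L^1$-direct integral). For the inductive step, the affine connectives are precisely the maps $\bar y \mapsto \sum_j c_j y_j + b$ with $c_j, b\in\bbR$; since both sides of the desired identity are linear in the interpretations of the immediate subformulas — the left side because interpretation of an affine connective commutes with such finite linear combinations, the right side because the integral is linear — the identity for $\phi$ follows from the identity for its subformulas. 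The only genuinely non-routine connective is the quantifier $\inf$ (equivalently $\sup$) over the sort: here one must show $(\inf_y \psi)^{L^1(\Omega,M)}(\bar f) = \int_\Omega (\inf_{a\in M}\psi^M(a,\bar f(\omega)))\,d\mu(\omega)$, and this is where measurability of $\omega\mapsto\inf_{a\in M}\psi^M(a,\bar f(\omega))$ needs a separate argument: since $M$ is complete and separable in the relevant cases (or one reduces to a dense subset using the modulus of continuity of $\psi$), the infimum over $M$ equals the infimum over a fixed countable dense set, hence it is a countable infimum of measurable functions and thus measurable. The inequality $(\inf_y\psi)^{L^1}(\bar f)\le \int_\Omega\inf_a\psi^M(a,\bar f(\omega))\,d\mu$ is immediate by witnessing the infimum with a (measurable selection of a near-)optimal section, and the reverse inequality follows because for any $\bar g\in L^1(\Omega,M)$ one has $\psi^{L^1}(\bar g,\bar f) = \int_\Omega\psi^M(g(\omega),\bar f(\omega))\,d\mu \ge \int_\Omega\inf_a\psi^M(a,\bar f(\omega))\,d\mu$ pointwise under the integral, so taking the infimum over $\bar g$ preserves the bound.

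Finally I would pass from simple sections to arbitrary $\bar f\in L^1(\Omega,M)$. Given $\bar f$, choose simple sections $\bar f_m\to\bar f$ in $d^{L^1(\Omega,M)}$; then (after passing to a subsequence) $\bar f_m(\omega)\to\bar f(\omega)$ for $\mu$-almost every $\omega$, so $\omega\mapsto\phi^M(\bar f(\omega))$ is an a.e.\ pointwise limit of the measurable functions $\omega\mapsto\phi^M(\bar f_m(\omega))$ (using that $\phi^M$ is continuous on $M^{|\bar x|}$) and hence measurable, which handles the measurability assertion in general. For the integral identity, note that affine formulas are $1$-Lipschitz-controlled by a convex modulus of continuity, so $|\phi^{L^1}(\bar f)-\phi^{L^1}(\bar f_m)|$ and $\big|\int_\Omega\phi^M(\bar f(\omega))\,d\mu - \int_\Omega\phi^M(\bar f_m(\omega))\,d\mu\big|$ are both controlled by (an average of) the modulus applied to $d^M(\bar f(\omega),\bar f_m(\omega))$, which tends to $0$ as $m\to\infty$ by the dominated convergence theorem (the integrands are bounded by $1$). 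Combining this with the already-established identity for each $\bar f_m$ yields the identity for $\bar f$. The main obstacle I anticipate is the $\inf$-quantifier step, specifically getting measurability of the fibrewise infimum and the reverse inequality without assuming separability of $M$; the intended fix is to invoke the modulus of continuity of the subformula to reduce any such infimum over $M$ to an infimum over a countable dense subset, or, in the nonseparable case, to cite the relevant measurable-selection / direct-integral machinery of \cite[\S 8]{benyaacov2024extremal} on which this theorem is modelled.
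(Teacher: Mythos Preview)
The paper does not give its own proof of this statement: Theorem~\ref{thm:Los} is quoted from \cite[Theorem~8.11]{benyaacov2024extremal} and used as a black box. There is therefore nothing in the present paper to compare your argument against.

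That said, your outline is the standard route and is essentially sound: induction on affine formulas for simple sections, then passage to general sections by density plus dominated convergence. The quantifier step is indeed the only nontrivial part, and your identification of the difficulty (measurability of $\omega\mapsto\inf_{a\in M}\psi^M(a,\bar f(\omega))$ and existence of near-optimal measurable selections when $M$ is nonseparable) is accurate. Your proposed fix via a countable dense set does not work as stated in the nonseparable case: even though each individual section $\bar f$ has essentially separable range, the infimum is over \emph{all} of $M$, and there is no reason the pointwise minimizers should lie in any fixed separable set. The correct workaround, as carried out in \cite[\S 8]{benyaacov2024extremal}, uses the pointwise enumeration built into the definition of the direct integral: one approximates by simple sections valued in finitely many elements of $M$, and the $\inf$ over all measurable sections is then realized as a limit of infima over such finite approximants, which suffices for both measurability and the $\le$ inequality. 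So your proposal is correct in structure and honestly flags its own gap; filling that gap genuinely requires the machinery you cite rather than a naive separability reduction.
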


One of the most relevant (to our purposes) results proved in \cite{benyaacov2024extremal} is that models of non-degenerate simplicial theories admit a unique integral decomposition (up to an appropriate notion of isomorphism of measurable fields of structures). Our proof of Theorem \ref{T.isomorphic_randomizations} makes essential use of the following uniqueness theorem for decomposition of $L^1$-structures, which in its original form in \cite[Theorem~12.16]{benyaacov2024extremal} is stated for direct integrals.

\begin{theorem}[{\cite[Theorem~12.16]{benyaacov2024extremal}}] \label{L.DetectingCoordinates} Let  $(\Omega, \mu)$ and $(\Xi, \nu)$ be probability measure spaces, and suppose that $M$, $N$ are extremal models of the same non-degenerate, simplicial, affine theory.
		\begin{enumerate}
		\item 	 If $\Phi\colon L^1(\Omega, M) \to L^1(\Xi, N)$ is an affine embedding then there is an isometric linear map\footnote{In \cite[Definition~12.9]{benyaacov2024extremal} $\frs$ is an embedding between measure algebras, but we identify it with its canonical extension to the map between the associated $L^1$-spaces.} 
		\[
		\frs_{\Phi}\colon L^1(\Omega,[0,1])\to L^1(\Xi,[0,1])
		\] 
		such that  for every $\phi(\bar x) \in \Laffx$ and every tuple of variables $\bar a \in L^1(\Omega,M)^{| \bar x |}$ we have 
		\[
		\frs_{\Phi}\llbracket\phi(\bar a)\rrbracket=\llbracket \phi(\Phi(\bar a))\rrbracket. 
		\]
		\item 	If $\Phi$  is moreover an isomorphism then $\frs_{\Phi}$ is an isomorphism. 
			\end{enumerate}
 \end{theorem}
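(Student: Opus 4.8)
The plan is to reconstruct the ``coordinate'' $L^1(\Omega,[0,1])$ directly out of the affine type structure of $L^1(\Omega,M)$, using that $M$ is extremal and $T$ is simplicial. Fix a length $n$ and write $K_n\coloneqq S^{\text{aff}}_{\bar x}(T)$ for $|\bar x|=n$; this is a Choquet simplex and, since $M$ and $N$ are models of the same $T$, it is the \emph{same} simplex for both. For a tuple $\bar f\in L^1(\Omega,M)^n$ I would consider the map $\beta_{\bar f}\colon\Omega\to K_n$ sending $\omega$ to the affine type $\operatorname{tp}^{\text{aff}}(\bar f(\omega))$; this is $\mu$-measurable because $\omega\mapsto\phi^M(\bar f(\omega))$ is measurable for every $\phi(\bar x)\in\Laffx$ (clause \ref{4.D.L.1} of Definition~\ref{Def.L1} and Theorem~\ref{thm:Los}), and the maps $p\mapsto p(\phi)$ generate the weak$^\ast$ topology on $K_n$. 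Put $\mu_{\bar f}\coloneqq(\beta_{\bar f})_\ast\mu$, a Borel probability measure on $K_n$. Since $M$ is extremal, $\beta_{\bar f}$ takes values in the extreme boundary of $K_n$, so $\mu_{\bar f}$ is carried by the extreme boundary and hence is a maximal measure; and by Theorem~\ref{thm:Los} its barycenter is exactly $\operatorname{tp}^{\text{aff}}_{L^1(\Omega,M)}(\bar f)$. As $K_n$ is a simplex, the maximal representing measure of a point is unique, so $\mu_{\bar f}$ is \emph{determined by} $\operatorname{tp}^{\text{aff}}_{L^1(\Omega,M)}(\bar f)$. I expect this step — the measure-theoretic bookkeeping in the possibly non-metrizable simplex $K_n$ (measurability of $\beta_{\bar f}$, ``carried by the extreme boundary'', uniqueness of the maximal measure) — to be the technical heart of the argument.

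Granting this, the construction goes as follows. Let $\Phi\colon L^1(\Omega,M)\to L^1(\Xi,N)$ be an affine embedding; then $\Phi$ preserves affine types of tuples, so by the previous paragraph $\mu_{\bar f}=\mu_{\Phi(\bar f)}$ as Borel measures on $K_n$ for every $\bar f$. Let $\mathcal A_{\bar f}$ be the $\sigma$-subalgebra of the measure algebra of $\Omega$ generated by $\beta_{\bar f}$, so that $\psi\mapsto\psi\circ\beta_{\bar f}$ identifies $L^1(K_n,\mu_{\bar f})$ isometrically with the $L^1$-space over $\mathcal A_{\bar f}$, and likewise $\psi\mapsto\psi\circ\beta_{\Phi(\bar f)}$ identifies $L^1(K_n,\mu_{\Phi(\bar f)})=L^1(K_n,\mu_{\bar f})$ with the $L^1$-space over the corresponding subalgebra of the measure algebra of $\Xi$. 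I would then set $\frs_\Phi(\psi\circ\beta_{\bar f})\coloneqq\psi\circ\beta_{\Phi(\bar f)}$; this is well defined precisely because $\mu_{\bar f}=\mu_{\Phi(\bar f)}$ (functions on $K_n$ agreeing $\mu_{\bar f}$-a.e. agree $\mu_{\Phi(\bar f)}$-a.e.), and it is isometric, linear, positive, and induced by an embedding of measure algebras. Passing from a tuple to a subtuple is intertwined with the canonical projection $K_m\to K_n$ on both sides, so these maps cohere along the directed family of all finite tuples. Since $T$ is non-degenerate, $M$ has two distinct elements $a\neq b$; for any $E$ in the measure algebra of $\Omega$ the tuple $\bar f=(a,\,a\mathbf 1_{\Omega\setminus E}+b\mathbf 1_E)$ has $\beta_{\bar f}$-image $\{\operatorname{tp}^{\text{aff}}(a,a),\operatorname{tp}^{\text{aff}}(a,b)\}$, two distinct points of $K_2$ (they already differ on the atomic formula $d(x_1,x_2)$), whence $E\in\mathcal A_{\bar f}$. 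Thus $\bigvee_{\bar f}\mathcal A_{\bar f}$ is the whole measure algebra of $\Omega$, the union of the $L^1$-spaces over the $\mathcal A_{\bar f}$ is dense in $L^1(\Omega,[0,1])$, and the coherent family extends by continuity to an isometric linear map $\frs_\Phi\colon L^1(\Omega,[0,1])\to L^1(\Xi,[0,1])$.

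It remains to verify the two clauses. For \emph{(1)}: each $\phi(\bar x)\in\Laffx$ induces a continuous affine function $\hat\phi\colon K_n\to[0,1]$ with $\phi^{A}(\bar c)=\hat\phi(\operatorname{tp}^{\text{aff}}(\bar c))$ for every model $A\models T$ and every $\bar c\in A^n$, so $\llbracket\phi(\bar f)\rrbracket=\hat\phi\circ\beta_{\bar f}$ lies in the $L^1$-space over $\mathcal A_{\bar f}$ and, by construction, $\frs_\Phi\llbracket\phi(\bar f)\rrbracket=\hat\phi\circ\beta_{\Phi(\bar f)}=\llbracket\phi(\Phi(\bar f))\rrbracket$, which is the asserted identity. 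For \emph{(2)}: if $\Phi$ is an isomorphism then $\Phi^{-1}$ is again an affine isomorphism, $\beta_{\Phi^{-1}(\Phi(\bar f))}=\beta_{\bar f}$, and running the construction for $\Phi^{-1}$ shows that $\frs_{\Phi^{-1}}\circ\frs_\Phi$ is the identity on the $L^1$-space over each $\mathcal A_{\bar f}$, hence on the dense union, hence on all of $L^1(\Omega,[0,1])$; by symmetry $\frs_\Phi\circ\frs_{\Phi^{-1}}=\operatorname{id}$, so $\frs_\Phi$ is an isomorphism. As flagged above, the only genuinely nontrivial input is the first paragraph; everything else is coherence bookkeeping together with the non-degeneracy trick that lets two-valued sections recover the whole measure algebra.
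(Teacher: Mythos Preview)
The paper does not prove this theorem: it is quoted verbatim from \cite[Theorem~12.16]{benyaacov2024extremal} and used as a black box, with only the remark that for tracial von Neumann algebras the result is much easier because the measure algebra is recoverable as the projection lattice of the center. So there is no in-paper proof to compare your attempt against.

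That said, your outline is sound and is essentially the natural route (and, as far as one can tell from the statement and the surrounding discussion in \cite{benyaacov2024extremal}, in the spirit of the original argument). The key mechanism---push forward $\mu$ along $\omega\mapsto\operatorname{tp}^{\text{aff}}(\bar f(\omega))$ to get a boundary measure on the simplex $S^{\text{aff}}_{\bar x}(T)$ whose barycenter is $\operatorname{tp}^{\text{aff}}_{L^1(\Omega,M)}(\bar f)$, then invoke uniqueness of maximal representing measures on a Choquet simplex to conclude that an affine embedding preserves these pushforwards---is exactly right, and your coherence argument via concatenated tuples and the non-degeneracy trick (using a two-valued section to capture an arbitrary measurable set) are both correct. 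The one place requiring genuine care, which you correctly flag, is the Choquet theory in the non-metrizable case: the extreme boundary of $S^{\text{aff}}_{\bar x}(T)$ need not be Borel, so ``supported on the extreme boundary'' and ``maximal measure'' have to be interpreted in the sense of the Choquet--Bishop--de Leeuw ordering, and the uniqueness statement is the general simplex theorem rather than the metrizable version. This infrastructure is developed in the earlier sections of \cite{benyaacov2024extremal}; granted it, your sketch goes through.
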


We remark that Theorem~\ref{L.DetectingCoordinates} is considerably easier to prove for the theory of tracial von Neumann algebras because in that case the measure algebras associated to $(\Omega, \mu)$ and $(\Xi, \nu)$ are definable, being the lattices of projections of the centers of the von Neumann algebras in question.

\subsection{Randomizations} \label{S.random}
We recall the notion of {randomization} of a continuous language as in \cite{benyaacov2013theories} (to which we refer for all the missing details; see also \cite{keisler1999randomizing, benyaacov2009randomizations} for analogous definitions for discrete languages). Given a continuous language~$\cL$, the \emph{randomization language} $\cLR$ contains all sorts of $\cL$ (called main sorts) as well as an auxiliary sort. Every function symbol $F$ of $\cL$ is retained in $\cLR$, with the same sort and modulus of uniform continuity. 
For every predicate symbol $R$ of $\cL$ (including the distance function), $\cLR$ contains a function symbol $\Boo R$ from the corresponding main sorts to the auxiliary sort. It retains the same modulus of continuity as $R$. 
The auxiliary sort is equipped with the structure of the space of \emph{random variables} as in  \cite[\S 2]{benyaacov2013theories}. 

More concretely, given a metric structure $M$ and a probability measure space $(\Omega, \mu)$, the \emph{randomization} of $M$ is an $\cLR$-structure obtained from $(L^1(\Omega,M), d^{L^1(\Omega,M)})$ as in Definition~\ref{Def.L1} by adding the auxiliary sort $L^1(\Omega, [0,1])$ and  replacing~\eqref{4.D.L.1}   with the following variant.
\begin{enumerate}[label=(\arabic*$^{\mathsf{R}}$)] \setcounter{enumi}{3}
\item Given a $k$-ary predicate symbol $R$ in $\cL$ and a $k$-tuple $\bar f \coloneqq (f_0, \dots, f_{k-1})$ in $L^1(\Omega, M)$, interpret $\Boo R (\bar f)$ as the equivalence class of the measurable function $\omega \mapsto R^M(\bar f(\omega))$, where $\bar f(\omega)$ is an abbreviation for the $k$-tuple $(f_0(\omega), \dots, f_{k-1}(\omega))$. 
\end{enumerate}

Letting $\bbA$ be the measure algebra associated to $(\Omega, \mu)$, following \cite{benyaacov2013theories}, we use the notation $(\widehat M, \widehat \bbA)$ to denote the {randomization} in $\cLR$ of $M$ just defined, in order to distinguish it from the metric $\cL$-structure  $(L^1(\Omega,M), d^{L^1(\Omega,M)})$ as in Definition~\ref{Def.L1}.

This describes  the minimalist version of $\cLR$, as introduced in \cite[\S 3]{benyaacov2013theories}. In the original definitions from \cite{keisler1999randomizing, benyaacov2009randomizations}, the randomization language $\cLR$ contains a function symbol $\Boo\varphi$, with image in the auxiliary sort, for every $\cL$-formula $\phi$, interpreted as in \eqref{eq:boo} in the randomization of $M$. Note, however, that by \cite[Theorem~3.14]{benyaacov2013theories} these additional symbols are definable for randomizations of metric structures in the minimalist $\cLR$.

\subsection{Countable quantifier-free saturation}\label{S.cqf-saturated}

We recall the definition of countable quantifier-free saturation only for tracial von Neumann algebras. A tracial von Neumann algebra $(M,\tau)$ is \emph{countably quantifier-free saturated} if whenever $p_n(\bar x)$ are *-polynomials in non-commuting variables with coefficients in $M$ and $r_n\geq 0$, for $n\in \bbN$, the system of equations (usually called \emph{conditions}) $\|p_n(\bar x)\|_{2, \tau}=r_n$ has an exact solution in  the operator unit ball of $M$ if and only if, for every $n\geq 1$, there is a tuple $\bar a_n$ of elements in the operator unit ball of $M$ such that
\[
\max_{j<n}|\|p_j(\bar a_n)\|_{2,\tau}-r_j|_2<1/n.
\] 
Such system of equations is usually called a \emph{(partial) type},\footnote{All types in this paper will be partial.} and countable quantifier-free saturation of $M$ asserts that a type is realized  in $M$ if and only if it is approximately satisfiable in $M$. 

If $M=\prod^{\cU} M_n$ is an ultraproduct of tracial von Neumann algebras  associated with a nonprincipal ultrafilter on $\bbN$ and  $N$ is a \wstar-subalgebra of $M$ with separable predual then both $M$ and $M\cap N'$ are countably quantifier-free saturated (\cite[Theorem 16.4.1 and Corollary 16.7.2]{Fa:STCstar}). While $M$ is even fully saturated, we do not know whether this is the case with $M\cap N'$. For comparison, the relative commutant of a separable \cstar-subalgebra of a $C^\ast$-ultrapower is countably quantifier-free saturated, but not necessarily fully saturated (\cite[Proposition~6.3]{farah2023obstructions}). 

\subsection{Forcing with measure algebras}
We refer to \cite{kunen2014set, Jech:ST} for an introduction to the forcing method and for the use of measure algebras as forcing notions (see  also \cite{solovay1970model}). 
%While forcing was introduced as a means for proving independence results, it is often used to prove theorems in ZFC, and this is what we will do. For the benefit of the readers not fluent in forcing  (and as an attempt to popularize this remarkable method) we present the relevant notions and facts. 

If $\bbA$ is a measure algebra, then a \emph{maximal antichain} in $\bbA$ is a family $\cA$ of pairwise disjoint elements of $\bbA$ such that $\bigcup \cA$ has full measure. An important property of $\bbA$ as a forcing notion is that every maximal antichain is countable, a property that had been misnomered as \emph{countable chain condition}, or \emph{c.c.c.}. 

%This property had been misnomered as \emph{countable chain condition} and it was used to prove that a forcing at hand does not collapse cardinals. The latter property was essential in Cohen's proof that the CH fails in the forcing extension, but we will use it for different purpose. 
A \emph{generic filter} is a filter on $\bbA$ that intersects every maximal antichain. 
If $\bbA$ is a measure algebra, then by Maharam's theorem there is a maximal antichain $\cA$ in $\bbA$ such that for each $a\in \cA$ the measure algebra $\bbA_a$ is homogeneous. Therefore for every measure algebra $\bbA$, every generic filter corresponds to the generic filter in some homogeneous algebra of the form $\bbA_a$. 

If $\bbA$ is an atomless measure algebra then a generic filter does not exist. However, a standard combination of the L\" owenheim--Skolem Theorem and of the Mostowski collapse (see e.g., \cite[IV.1, IV.2]{kunen2014set}) shows that assuming the existence of a generic filter $G$ in a larger universe, called \emph{forcing extension} and denoted $V[G]$ ($V$ is the standard notation for von Neumann's universe) serves as a valid proof technique. 
We may therefore  assume that  every condition $p$ in $\bbA$  is included in  a generic filter~$G$ that exists in an extension of $V$. %Without this assumption, Proposition~\ref{P.forcing}.\ref{5.forcing} below would imply that, for a diffuse measure algebra $\bbA$, the relation $p\fA \varphi$ defined below is vacuously satisfied for all $p$ and $\varphi$. 

Every element of $V[G]$ has a \emph{name} in $V$: the set of all $\bbA$-names (\cite[Definition IV.2.5]{kunen2014set}) is denoted as $V^\bbA$. We let $\dot G$ be the canonical name for a generic filter $G$. By Maharam's theorem (\S\ref{S.MeasureAlgebras}) we may assume that the measure space associated to~$\bbA$ is of the form $(\{0,1\}^\kappa,\mu_\kappa)$. For every $\xi<\kappa$ it is forced that exactly one of the sets $\{\alpha\in \{0,1\}^\kappa\mid \alpha(\xi)=j\}$ for $j=0,1$ belongs to $\dot G$. Therefore $\dot G$ determines a generic element of $\{0,1\}^\kappa$, whose canonical name is denoted $\rAG$.  It is suggestively called `random real' in case when $\bbA$ is the Lebesgue measure algebra.

The properties of $V[G]$ are described by the  \emph{forcing language}, an expansion of the language of ZFC, by names for elements of $V[G]$. If $G$ is a generic filter, then every name $\dot a$ has an evaluation, which we denote $\dot a_G$ (\cite[Definition IV.2.7]{kunen2014set}), and $V[G]$ is the class of all such evaluations. In other words, $V[G]$ is obtained by evaluating names by $G$. An argument using coding for Baire-measurable subsets of $\{0,1\}^\kappa$ shows that $(\dot r_\bbA)_G$ uniquely determines~$G$, but we will not need this fact. 

 If $\varphi$ is a sentence of the forcing language and $p\in \bbA$, we say that $p$ \emph{forces} $\varphi$, and write $p\fA \varphi$, if the sentence obtained from $\varphi$, by evaluating all names that occur in it, holds in $V[G]$ for every generic $G$ with $p \in G$. We write $\fA \phi$ to abbreviate $1_\bbA \fA \phi$.
 	
	 In the following proposition we collect all the key facts about forcing that we will need. Some of them apply to an arbitrary forcing notion, while some are specific to measure algebras. 

\begin{proposition}\label{P.forcing}  Suppose that $\bbA$ is a measure algebra with measure $\mu$ and $G\subseteq \bbA$ is a generic filter. Let $\phi, \psi$ be sentences in the forcing language.
	\begin{enumerate}
		%\item \label{4.forcing}  For every statement $\varphi$ in the forcing language   there is $p\in \bbA$ such that $p\fA \varphi$ and $1_\bbA -  p \fA{\lnot \varphi}$.\footnote{A common notation for $p$ with these properties is $\Boo \varphi$, and $p$ is called the Boolean value of~$\varphi$. We avoid this notation because of the clash with the notation introduced in \S\ref{S.Affine}.}  
		\item \label{4.0.forcing} No $p \in \bbA$ can force both $\phi$ and $\neg \phi$.
		\item 
		If $q\le p\in \bbA$ and $p\fA\varphi$, then $q \fA \varphi$. 
		\item \label{4.1.forcing} 
		 $p\fA\varphi$ if and only if no $q\leq p$ forces $\lnot \varphi$. 
			%If all parameters of $\varphi$ belong to $M$ and $\bbA$ is homogeneous, then $a\in \{0_\bbA,1_\bbA\}$.
		\item \label{0.forcing} If $p\fA (\exists x)\psi(x)$ then there is a name $\dot a$ in $V^\bbA$ such that $p\fA\psi(\dot a)$. 
		
%		\item \label{n.forcing} If a measure algebra $\bbA$ is a subagebra of a measure algebra $\bbB$, and the measure on $\bbA$ is the restriction of the measure on $\bbB$, then $\bbA$ is a regular subordering of $\bbB$.   We can therefore construe forcing with $\bbB$ as a two-step iteration $\bbA* (\bbB/\dot G_\bbA)$, where $\dot G_\bbA$ is a canonical name for $\bbA$-generic filter. 	

%		\item\label{3.forcing} A real is random over a model $M$ of ZF-P if and only if it does not belong to any null set coded in $M$. 

		\item \label{5.forcing} Suppose that $(q_n)_{n\in \bbN}$ are elements of $\bbA$  such that for every pair of disjoint finite subsets $X$ and $Y$ of $\bbN$ we have (with the convention that $\bigcap_{n\in \emptyset} q_n=1_\bbA$)
		\[
		\textstyle\mu\left(\bigcap_{n\in X}q_n\cap \bigcap_{n\in Y} (1_\bbA - q_n)\right)=2^{-|X|-|Y|}. 
		\]
			Then the set $Z_G=\{n\mid q_n\in G\}$ belongs to $V[G]$, but not to $V$. %In particular, if $\bbA$ is diffuse then forcing by it adds a new real number to the universe.
	\end{enumerate}
	
\end{proposition}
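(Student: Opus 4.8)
The plan is to derive (1)--(4) from the general forcing apparatus (citing \cite{kunen2014set,Jech:ST}) and to give the one measure-algebra-specific argument, for (5), in full. Items (1)--(3) follow from the semantic description of $\fA$ fixed above together with the Truth Lemma (the substantive half of the Forcing Theorem). For (1): if some condition $p$ forced both $\phi$ and $\neg\phi$, then for a generic $G\ni p$ (which exists in a forcing extension) the model $V[G]$ would satisfy a contradiction. Item (2) is immediate, since a generic filter containing $q$ also contains every $p\geq q$. For (3): ``$\Rightarrow$'' combines (1) and (2); conversely, if $p$ does not force $\phi$ then $V[G]\models\neg\phi$ for some generic $G\ni p$, so by the Truth Lemma some $q\in G$ forces $\neg\phi$, and then $q\cap p\in G$ is a nonzero condition $\leq p$ forcing $\neg\phi$ by (2), contrary to hypothesis. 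Item (4) is the Maximality Principle: using (3) and Zorn's Lemma choose a maximal antichain $(p_i)_i$ below $p$ together with names $\dot a_i\in V^\bbA$ such that $p_i\fA\psi(\dot a_i)$, and mix the $\dot a_i$ along $(p_i)_i$ into a single name $\dot a$ with $p\fA\psi(\dot a)$; the c.c.c.\ makes the antichain countable, but this is not needed.

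For (5), set $\dot Z\coloneqq\{(\check n,q_n):n\in\bbN\}\in V^\bbA$, so that its evaluation is $\dot Z_G=\{n:q_n\in G\}=Z_G$; hence $Z_G\in V[G]$. Note that if a condition $q$ satisfies $q\leq q_n$ then $q\fA\check n\in\dot Z$ (a generic filter containing $q$ contains $q_n$), and if $q\leq 1_\bbA-q_n$ then $q\fA\check n\notin\dot Z$ (a filter cannot contain both $q$ and $q_n$). To see $Z_G\notin V$, suppose otherwise and put $A\coloneqq Z_G\subseteq\bbN$, a set lying in $V$; since $\dot Z_G=A$, the Truth Lemma yields a condition $p\in G$ with $p\fA\dot Z=\check A$. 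I claim no condition forces this. The key point is that no condition decides the events ``$q_n\in\dot G$'' for all large $n$: if $p\leq q_n$ for every $n\in S_1$ and $p\leq 1_\bbA-q_n$ for every $n\in S_2$, with $S_1\cup S_2\supseteq\{N,N+1,\dots\}$, then $p\leq\bigcap_{n\in X}q_n\cap\bigcap_{n\in Y}(1_\bbA-q_n)$ for all finite $X\subseteq S_1$, $Y\subseteq S_2$, whence $\mu(p)\leq 2^{-|X|-|Y|}$; since $S_1\cup S_2$ is infinite this forces $\mu(p)=0$, so $p$ is not a condition. Hence there is $n$ with $0_\bbA\neq p\cap q_n\neq p$. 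If $n\notin A$, set $q\coloneqq p\cap q_n$: then $0_\bbA\neq q\leq p$ and $q\leq q_n$, so $q\fA(\check n\in\dot Z)$ while $q\fA(\check n\notin\check A)$, hence $q\fA\dot Z\neq\check A$. If $n\in A$, set $q\coloneqq p\cap(1_\bbA-q_n)$: then $0_\bbA\neq q\leq p$ and $q\leq 1_\bbA-q_n$, so $q\fA(\check n\notin\dot Z)$ while $q\fA(\check n\in\check A)$, again $q\fA\dot Z\neq\check A$. In either case, by (3), $p$ does not force $\dot Z=\check A$; this contradicts the choice of $p$, so $Z_G\notin V$.

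The only step where something must actually be proved rather than invoked is this last measure-theoretic observation: the independence hypothesis on the $q_n$ prevents any positive-measure condition from committing to cofinitely many of the coordinates recorded in $Z_G$. (Equivalently, the $\sigma$-subalgebra generated by the $q_n$ is measure-isomorphic to the Lebesgue algebra on $\{0,1\}^\bbN$ and $Z_G$ codes its generic point, so this is the classical fact that random forcing adds a real not in the ground model.) Everything else is routine bookkeeping with names together with the Forcing Theorem.
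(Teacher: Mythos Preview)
Your proof is correct. For items (1)--(4) you give the standard sketches where the paper simply cites \cite{kunen2014set}; this is fine and your arguments are accurate.

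For item (5) your route differs from the paper's. You argue by contradiction via the Truth Lemma: assuming $Z_G=A\in V$, you find $p\in G$ forcing $\dot Z=\check A$, then use the independence hypothesis to show that no positive-measure $p$ can decide cofinitely many coordinates, so some $n$ has $0_\bbA\neq p\cap q_n\neq p$; extending $p$ appropriately contradicts $p\fA\dot Z=\check A$. The paper instead works directly with genericity: for an arbitrary $Z\subseteq\bbN$ in $V$ it sets $p_m=\bigcap_{n\in Z\cap m}q_n\cap\bigcap_{n\in m\setminus Z}(1_\bbA-q_n)$, observes that $\mu(p_m-p_{m+1})=2^{-m-1}>0$ so $(p_m-p_{m+1})_{m\in\bbN}$ is a maximal antichain, and notes that if $p_m-p_{m+1}\in G$ then $Z$ and $Z_G$ disagree at coordinate $m$. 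The paper's argument avoids invoking the Truth Lemma and packages the measure computation into a single explicit antichain, which is a bit slicker; your argument is equally valid and makes more transparent \emph{why} no condition can pin down $Z_G$ (and your parenthetical remark identifying this with the random real is apt). Both are standard and either would be acceptable here.
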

\begin{proof}
		%\eqref{4.forcing} 
		%By the Truth Lemma (\cite[Lemma IV.2.24]{kunen2014set}), $\varphi$ holds in the  forcing extension $V[G]$ if and only if $q \fA \phi$ for some condition $q\in G$.  Since $\bbA$ is a complete Boolean algebra, we can take $p$ to be the supremum of all $q\in G$ that force $\varphi$. Then $p$ is as required. 
		\eqref{4.0.forcing}--\eqref{4.1.forcing} . See \cite[Lemma IV.2.30]{kunen2014set}.
			
	\eqref{0.forcing}. See \cite[Lemma IV.2.32]{kunen2014set}.
	
%	\eqref{n.forcing} We need to verify that every maximal antichain in $\bbA$ is also a maximal antichain in $\bbB$.	This is because an antichain $(a_i)$ in a measure algebra is maximal if and only if $\sum_i \mu(a_i)=1$, hence  %(since $\frs_\Phi$ is measure-preserving) 	every maximal antichain in $\bbA$ is mapped to a maximal antichain in $\bbB$. Because of this, $\bbB$ is forcing-equivalent to the two-step iteration $\bbA* (\bbB/\dot G_\bbA)$ by \cite[?]{kunen2014set}.

%	\eqref{3.forcing} See  \cite[Definition V.4.14 and Exercise V.4.19]{kunen2014set}.

	% IF: Not sure whether we will need the Definability Lemma. 
	%This is the Definability Lemma (\cite[Lemma~IV.2.25]{kunen2014set}); we will be using it mostly tacitly.
	%\cD^-(M) is Definition 1.15.25 
	
	\eqref{5.forcing}. Let $(q_n)_{n \in \bbN}$ be as in the statement and fix some $Z\subseteq \bbN$. For $m\in \bbN$, interpreting $m$ as the set $\{ 0, \dots, m-1 \}$, define 
	\[
	p_m \coloneqq \bigcap_{n\in Z\cap m}q_n\cap \bigcap_{n\in m\setminus Z} (1_\bbA - q_n). 
	\]
	Then $p_0=1_\bbA$ and $\mu(p_m -  p_{m+1})=2^{-m-1}$, so $p_m - p_{m+1} \ne 0_\bbA$, for all $m \in \bbN$. Therefore the sets $(p_m- p_{m+1})_{m\in \bbN}$ form a maximal antichain in $\bbA$. 
	
	 Let $G$ be a generic filter, and let $m \in \bbN$ be such that $p_m- p_{m+1}\in G$. We then have $m \in Z$ if and only if $m \notin Z_G$, and therefore $Z \ne Z_G$. Since $Z \subseteq \bbN$ was chosen arbitrarily in $V$, we conclude that $Z_G$ does not belong to $V$.
\end{proof}

If $\bbA$ has an atom $A$, then $G=\{B\in \bbA\mid A\leq B\}$ is a generic filter. On the other hand, \eqref{5.forcing} implies that if $\bbA$ has no atoms then no generic filter exists in $V$. 

\section{Names for elements of metric structures}  \label{S.names}

Completeness of metric spaces is not preserved by forcing extension, since new non-convergent Cauchy sequences might be added in the process. As a consequence, if $M$ is a metric structure in $V$ and $V[G]$ is a generic extension, the domain of $M$ might not be complete in $V[G]$ and $M$ might become a \emph{prestructure} in $V[G]$ in the sense of \cite[\S 3]{benyaacov2008model}. For example, if  a forcing  notion adds a new real number (this is the case with diffuse measure algebras, by Proposition~\ref{P.forcing}.\ref{5.forcing}), then in $V[G]$ even the field of real numbers  that belong to $V$ is no longer complete. 

This issue has an easy solution: take $M^{V[G]}$, which denotes the completion of $M$ in $V[G]$, and interpret it as a metric structure by taking the unique continuous extensions of the interpretations $R^M$, $F^M$ of all symbols in the language of $M$. It is immediate to see that, given $\phi(\bar x)\in \cL_{ \bar x}$, the interpretation $\phi^{M^{V[G]}}$ is the unique continuous function on $(M^{V[G]})^{| \bar x |}$ extending $\phi^M$.

Given a forcing notion $\bbA$, we fix some $\bbA$-name $M^\ast$ for the completion of $M$ in the generic extension and similarly, given $\phi \in \cL_{\bar x}$ (or some symbol in $R, F \in \cL$), we let $\phi^{M^\ast}$ (respectively $R^{M^\ast}, F^{M^\ast}$) be an $\bbA$-name for the function on $M^\ast$ extending $\phi^M$ (respectively $R^M, F^M$).

We denote by $M^\bbA$ the set of all \emph{$\bbA$-names for elements of $M^\ast$}, that is the set of all $\bbA$-names $\dot a$ such that $\fA \dot a \in M^\ast$. This is a proper class. However, if we set $\dot a \sim \dot b$ if and only if $\fA \dot a=\dot b$, then the quotient $\sfrac{M^\bbA}{\sim}$ has cardinality at most $|M|^{\aleph_0}$ if $\bbA$ is a measure algebra. This is a consequence of the fact that measure algebras have countable chain condition  and a standard counting argument (\cite[Lemma~IV.3.5]{kunen2014set}). Alternatively, in each $\sim$-equivalence class we can choose a representative of smallest possible rank and argue that this is a set.

The main result of this section is Lemma~\ref{L.names}, a fact probably well-known to experts, which provides a description of elements in $M^\bbA$, for $\bbA$ the measure algebra of $(\Omega, \mu)$,  in terms of functions in $L^1(\Omega, M)$, with a correspondence analogous to that of names for elements of a Polish space as spaces of functions with possibly infinite values from \cite{jech1985abstract, vaccaro2017generic}. Lemma~\ref{L.names} expands some of the ideas therein to the framework of continuous logic, bringing to light a natural interpretation of $\sfrac{M^\bbA}{\sim}$ as an $\cLR$-structure isomorphic to  the randomization $(\widehat M, \widehat \bbA)$ of $M$. This connection, isolated in Proposition \ref{prop:random},  will be essential for the proof of  Theorem \ref{T.isomorphic_randomizations} in the next section.

%It is probably taken as evident by the experts in forcing but we provide a proof. For the structure $L^1(\Omega,X)$ see Definition~\ref{Def.L1}. 

\begin{lemma} \label{L.names} Let $(\Omega, \mu)$ be a probability measure space, let $\bbA$ be its measure algebra and let $M$ be a metric structure in some language $\cL$. Then there are functions
\begin{align*}
\Xi \colon L^1(\Omega,M) &\to M^\bbA 
& 		\Theta \colon M^\bbA& \to L^1(\Omega, M)\\
f &\mapsto f(\rAG) 
&  \dot a &\mapsto \Theta_{\dot a}
\end{align*}
such that
	\begin{enumerate}[label=(\alph*)]
	\item \label{2.names}  For every $f \in L^1(\Omega, M)$, $b \in M$, $A \in \bbA$ and $t \in \bbQ_+$
\begin{equation*}
A \fA d^{M^\ast}(f(\rAG), \check b) \le \check t \quad  \iff \quad d^M (f(\omega), b) \le t \quad\text{for $\mu$-almost every } \omega \in A.
\end{equation*}
		\item \label{3.names} Suppose that $\phi(x_0, \dots, x_{k-1}) \in \cL_{\bar x}$, that $f_0, \dots, f_{k-1} \in L^1(\Omega, M)$, that $A \in \bbA$ and $t \in \bbQ_+$. The following are equivalent:
		\begin{enumerate}[label=(\roman*)]
		\item \label{item:i} $A \fA \phi^{M^\ast} (f_0(\rAG), \dots, f_{k-1}(\rAG)) \le \check t$,
		\item \label{item:iii} $ \phi^M(f_0(\omega), \dots , f_{k-1}(\omega)) \le t$ for $\mu$-almost every $\omega \in A$.
		\end{enumerate}	
		\item \label{4.names}  For every $\dot a\in M^{\bbA}$ we have  
		  $\fA \dot a = \Theta_{\dot a}(\rAG)$.
		\item \label{1.names}
		For every $f \in L^1(\Omega, M)$ we have $f(\omega) = \Theta_{f(\rAG)}(\omega)$ for $\mu$-almost every $\omega\in\Omega$.
		\end{enumerate} 
\end{lemma}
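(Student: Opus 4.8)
The plan is to construct $\Xi$ and $\Theta$ explicitly and then verify the four properties in the order (a), (b), (c), (d), as the later ones depend on the earlier. First I would make sense of $\Xi(f) = f(\rAG)$: given $f \in L^1(\Omega, M)$, choose a genuine measurable section representing $f$; since $M$ is complete and $f$ is a pointwise a.e. limit of simple sections $s_n = \sum_{i} a_i^{(n)} \mathbf 1_{A_i^{(n)}}$, each $s_n$ has an obvious $\bbA$-name (a ``mixture'' of the checks $\check a_i^{(n)}$ along the generic, using that $\rAG$ decides which $A_i^{(n)}$ it lands in), and the Cauchy condition $\int d^M(s_n, s_{n+1})\, d\mu \to 0$ forces that the sequence of names is forced to be Cauchy in $M^\ast$, hence converges to a name $\dot a =: f(\rAG)$. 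One must check this name is independent (mod $\sim$) of the choices, which follows once (a) is established. For $\Theta$: given $\dot a \in M^\bbA$, I want $\Theta_{\dot a}$ to be the a.e.-defined function whose value at $\omega$ is the unique $b \in M$ with $d^M(b, \cdot)$ matching what $\dot a$ is forced to be near. Concretely, for each $b$ in a fixed countable dense subset $D \subseteq M$ and each $t \in \bbQ_+$, the Boolean value $\llbracket d^{M^\ast}(\dot a, \check b) \le \check t \rrbracket \in \bbA$ corresponds to a measurable set $U_{b,t} \subseteq \Omega$; define $\Theta_{\dot a}(\omega)$ to be the limit of any sequence $b_n \in D$ with $\omega \in U_{b_n, 1/n}$. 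Measurability and the fact that this is well-defined a.e. come from $\fA \dot a \in M^\ast$ together with c.c.c.: the countably many sets $U_{b,t}$ exhaust $\Omega$ appropriately because $\dot a$ is forced to be approximable by checks of dense elements.

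For (a): the forward direction is bookkeeping with the name $f(\rAG)$ — by construction $d^{M^\ast}(f(\rAG), \check b)$ is the $\bbA$-name obtained as a limit of $d^M(s_n(\rAG), b)$, and on the piece of $\bbA$ below $A$ where $\rAG$ lands in a given cell $A_i^{(n)}$ this evaluates to $\check d^M(a_i^{(n)}, b)$; passing to the limit and using that $\fA \rAG \in A$ iff $A \in \dot G$, one sees $A$ forces the value $\le \check t$ exactly when $d^M(f(\omega),b)\le t$ a.e.\ on $A$. The reverse direction uses Proposition~\ref{P.forcing}.\eqref{4.1.forcing}: if the a.e.\ inequality holds on $A$ but $A$ does not force it, some $A' \le A$ forces $d^{M^\ast}(f(\rAG),\check b) > \check t$, and shrinking to a positive cell one gets a contradiction with the pointwise values of a simple approximant. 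Then (b) follows from (a) by reducing to the distance predicate: a general $\phi \in \cL_{\bar x}$ is, on $M$, a continuous function of the tuple, and both the forcing statement and the a.e.\ statement only see $\phi^{M^\ast}$ resp.\ $\phi^M$ applied to the fixed sections, so one approximates $\phi$ uniformly by rational combinations / uses continuity together with (a) applied to each coordinate $f_j(\rAG)$ and $f_j(\omega)$. (Here one uses that $\phi^{M^\ast}$ is by fiat the unique continuous extension of $\phi^M$, established right before the statement.)

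For (c): given $\dot a$, I must show $\fA \dot a = \Theta_{\dot a}(\rAG)$. By (a), the name $\Theta_{\dot a}(\rAG)$ is forced, for each $b \in D$ and $t \in \bbQ_+$, to satisfy $d^{M^\ast}(\Theta_{\dot a}(\rAG), \check b) \le \check t$ on exactly the set $\{\omega : d^M(\Theta_{\dot a}(\omega), b) \le t\}$, which by definition of $\Theta_{\dot a}$ is (a.e.) $U_{b,t}$, which is (a.e.) the Boolean value $\llbracket d^{M^\ast}(\dot a, \check b)\le \check t\rrbracket$. So $\dot a$ and $\Theta_{\dot a}(\rAG)$ are forced to have the same distance to every $\check b$, $b \in D$; density of $D$ in $M$ (and hence of the checks in $M^\ast$) forces them equal. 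Finally (d): for $f \in L^1(\Omega,M)$, apply (a) to get that $\Theta_{f(\rAG)}(\omega)$ is defined via $U_{b,t} = \{\omega : d^M(f(\omega), b) \le t\}$ up to null sets, and the intersection over $t = 1/n$ of $U_{b_n,1/n}$ with $b_n \to f(\omega)$ recovers $f(\omega)$ itself a.e., i.e.\ $\Theta_{f(\rAG)} = f$ in $L^1$.

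The main obstacle I expect is the careful a.e./``forced on $A$'' translation in part (a), in particular checking that the name $f(\rAG)$ built as a forced-Cauchy limit of simple-section names genuinely has the clean distance behavior claimed, and that the exceptional null sets (from the a.e.\ convergence $s_n \to f$, from the countably many pairs $(b,t)$, and from representing Boolean values as honest measurable sets) can be absorbed into a single null set using c.c.c. Everything else is a routine but bookkeeping-heavy deployment of the basic forcing facts in Proposition~\ref{P.forcing} together with density of simple sections and of $D$.
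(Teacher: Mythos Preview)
Your overall architecture is right and closely parallels the paper's, but there is one genuine gap and one imprecision worth flagging.

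The gap is in your construction of $\Theta$: you fix ``a countable dense subset $D \subseteq M$'', but the lemma is stated for an arbitrary metric structure $M$, and in the paper's principal applications $M$ is \emph{not} separable (e.g.\ $(\bbR,<)$ with the discrete metric in Corollary~\ref{C1}, or ultrapowers of von Neumann algebras later). So no such $D$ exists in general. The paper circumvents this by using c.c.c.\ differently: rather than a global dense set, for each $n\ge 1$ it uses $\fA$``$M$ is dense in $M^\ast$'' together with c.c.c.\ to produce a countable maximal antichain $\cA_n$ and points $a_{B,n}\in M$ (depending on $\dot a$) with $B\fA d^{M^\ast}(\dot a,\check a_{B,n})<1/\check n$. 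The simple functions $\theta_n\coloneqq\sum_{B\in\cA_n}a_{B,n}\mathbf 1_B$ are then shown to be Cauchy in $L^1(\Omega,M)$, and $\Theta_{\dot a}$ is their limit. Your Boolean-value approach can be repaired along the same lines---take $D$ to be the countable set $\{a_{B,n}:n\in\bbN,\,B\in\cA_n\}$, which depends on $\dot a$---but as written the proposal does not cover the non-separable case.

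The imprecision is in your justification that $(s_n(\rAG))$ is forced to be Cauchy: the condition $\int d^M(s_n,s_{n+1})\,d\mu\to 0$ does not by itself imply this (it does not even imply the sequence is Cauchy in $L^1$). What you actually need is the pointwise a.e.\ convergence you already assumed, and the paper makes this work cleanly via Egoroff's theorem: for each $\varepsilon>0$ there is $B$ with $\mu(\Omega\setminus B)<\varepsilon$ on which $s_n\to f$ uniformly, so $B\fA$``$(s_n(\rAG))$ is Cauchy'', and then Proposition~\ref{P.forcing}\ref{4.1.forcing} gives the forced statement globally. (The paper also notes that the range of the $s_n$'s lies in a separable subset of $M$, which is what makes Egoroff applicable.)
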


\begin{proof}
Fix $f \in L^1(\Omega, M)$. We begin by defining a name $f(\rAG)$ that satisfies condition~\ref{2.names}. If $f = \sum_{i < k} a_i \mathbf{1}_{A_i}$ is simple, then let $f(\rAG)$ be the $\bbA$-name such that $A_i \fA f(\rAG) = \check a_i$ for all $i < n$. It is immediate to check that \ref{2.names} holds in this case.

Suppose next that $f$ is not simple. There is thus a sequence $(f_n)_{n \in \bbN}$ of simple functions that converges pointwise to $f$ for $\mu$-almost every $\omega\in\Omega$. Note that, up to a measure zero set, the range of all $f_n$'s is contained in a separable subset of $M$, whose completion also contains the range of $f$. We can then apply the Egoroff theorem, which, given $\e > 0$, asserts the existence of $B \in \bbA$ such that $\mu (\Omega \setminus B) < \e$ and such that $\sup_{\omega \in B} d^M(f_n(\omega), f(\omega)) \to 0$ as $n \to \infty$, which is to say that $(f_n)_{n \in \bbN}$ converges uniformly to $f$ in $B$. By \ref{2.names} this entails $B \fA ` `(f_n(\rAG))_{n \in \bbN}$ is Cauchy'', and since $\fA ` `M^\ast$ is complete and Hausdorff'', we obtain
\[
B \fA ` `\lim_{n \to \infty}f_n(\rAG) \text{ exists and is unique}''.
\]
Fix now some $A \in \bbA$. By choosing $\e >0$ sufficiently small, we can choose $B$ so that $ B \cap A$ has positive measure. This means that for every $A \in \bbA$ there exists a nonzero $C\subseteq A$ which forces that $\lim_{n \to \infty} f_n(\rAG)$ exists and is unique. By Proposition \ref{P.forcing}.\ref{4.1.forcing}  we deduce $\fA  ` `\lim_{n \to \infty}f_n(r_\bbA)$ exists and is unique''. Using Proposition \ref{P.forcing}.\ref{0.forcing}, we let $f(\rAG) \in M^\bbA$ be an $\bbA$-name for such limit. Note that $f(\rAG)$ does not depend on the sequence $(f_n)_{n \in \bbN}$.

We show that the name $f(\rAG)$ so defined satisfies \ref{2.names}. Fix then $b \in M$, $A \in \bbA$ and $t \in \bbQ_+$. Let $(f_n)_{n \in \bbN}$ be a sequence of simple functions that converges pointwise to $f$ for $\mu$-almost every $\omega\in\Omega$. The equivalence follows by continuity of the distance:
\begin{align*}
d^M(f(\omega), b) \le t &\iff d^M\left(\lim_{n \to \infty}f_n(\omega), b\right) \le t, \quad\text{for $\mu$-almost every } \omega \in A \\
&\iff \lim_{n \to \infty} d^M(f_n(\omega), b) \le t , \quad\text{for $\mu$-almost every } \omega \in A \\
& \iff A \fA \lim_{n \to \infty} d^{M^\ast}(f_n(\rAG), \check b) \le \check t \\
& \iff A \fA d^{M^\ast}\left( \lim_{n \to \infty} f_n(\rAG), \check b\right) \le \check t \\
& \iff A \fA d^{M^\ast}( f(\rAG), \check b)\le \check  t .
\end{align*}

The equivalence in \ref{3.names} is a direct consequence of \ref{2.names} if $f_0, \dots, f_{k-1}$ are simple functions. The general case follows using uniform continuity of $\phi$ and density of simple functions in $L^1(\Omega, M)$, similarly to what we showed above for the distance symbol.

To prove \ref{4.names}, fix $\dot a\in M^\bbA$. Since $\fA ` `M$ is dense in $M^\ast$'', and since $\bbA$ has the c.c.c., for every $n \ge 1$ there is a countable maximal antichain $\cA_n$ and $(a_{B,n})_{B\in \cA_n} \subseteq M$ such that 
\begin{equation} \label{eq:approximate}
B\fA d^{M^\ast}(\dot a, \check a_{B,n}) < \frac{1}{\check n}, \quad  B\in \cA_n.
\end{equation}

Let $\theta_n \in L^1(\Omega, M)$ be defined on $\bigcup_{B\in \cA_n}B$ as:
\[
\theta_n(\omega) = a_{B,n}, \quad   \omega \in B, \, B\in \cA_n,
\]
and define $\theta_n(\omega) = a$ for some $a \in M$, whenever $\omega \in \Omega \setminus \bigcup_{B\in \cA_n}B$. Note that $B \fA \theta_n(\rAG) = \check a_{B,n}$ for all $B \in \cA_n$, so $\fA d^{M^\ast}(\dot a, \theta_n(r_\bbA)) < 1/\check n$ by \eqref{eq:approximate}, and therefore
\begin{equation}\label{eq:limit}
\fA \lim_{n \to \infty}\theta_n(r_\bbA) = \dot a.
\end{equation}

For all $m > n \ge 1$, $B_0\in \cA_m$, $B_1\in \cA_n$, by \eqref{eq:approximate} we have
\begin{equation*}
d^M(\theta_m(\omega),\theta_n(\omega))  <\frac 1m+\frac 1n, \quad\text{for $\mu$-almost every } \omega \in B_0 \cap B_1.
\end{equation*}
This implies in particular that $d^{L^1(\Omega, M)}(\theta_m, \theta_n) \leq \frac 1m+\frac 1n \le \frac2n$, thus $(\theta_n)_{n =1}^\infty$ is a Cauchy sequence in $L^1(\Omega, M)$. Let $\Theta_{\dot a}$ be its limit in $L^1(\Omega, M)$. Then $\fA \Theta(\rAG) = \lim_{n \to \infty}\theta_n(r_\bbA)$, so $\fA \Theta(\rAG) = \dot a$ by \eqref{eq:limit}.

Finally, to prove \ref{1.names}, note that by \ref{4.names} we have $\fA \Theta_{f(\rAG)}(\rAG) = f(\rAG)$, which, by \ref{3.names}, implies $\Theta_{f(\rAG)}(\omega) = f(\omega)$ for $\mu$-almost every $\omega \in \Omega$, as desired.
		\end{proof}

Lemma  \ref{L.names} implies that if $\dot a, \dot b \in M^\bbA$, then $\fA \dot a = \dot b$ if and only if $\Theta_{\dot a} = \Theta_{\dot b}$. From here on, we will systematically  identify elements of $M^\bbA$ with their classes in $\sfrac{M^\bbA}{\sim}$. We moreover retain the notation $\Theta$ for the induced map $\sfrac{M^\bbA}{\sim} \to L^1(\Omega, M)$. Note that, after modding out by $\sim$, $\Theta$ is a bijection with inverse $\Xi$.
By means of this correspondence, we can naturally transpose the $\cLR$-structure defined on $L^1(\Omega, M)$ in \S\ref{S.random} to $\sfrac{M^\bbA}{\sim}$ as follows.
		
\begin{definition} \label{def:random}
Let $M$ be a metric structure over a language $\cL$ and let $\bbA$ be the measure algebra of a probability space $(\Omega, \mu)$. Using $L^1(\Omega, [0,1])$ as an auxiliary sort, and letting $\Theta \colon \sfrac{M^\bbA}{\sim} \to L^1(\Omega, M)$ be the map defined in Lemma \ref{L.names}, we interpret $\sfrac{M^\bbA}{\sim}$ as an $\cL^\mathsf{R}$-structure as follows:
\begin{enumerate}
\item If $\dot a, \dot b \in \sfrac{M^\bbA}{\sim}$, then $d^{\sfrac{M^\bbA}{\sim}}(\dot a, \dot b) \coloneqq d^{L^1(\Omega, M)}(\Theta_{\dot a}, \Theta_{\dot b})$.
\item If $F \in \cL$ is a $k$-ary function symbol and $\dot a_0, \dots, \dot a_{k-1} \in \sfrac{M^\bbA}{\sim}$, we set
\[ 
F^{\sfrac{M^\bbA}{\sim}}(\dot a_0, \dots, \dot a_{k-1}) \coloneqq F^{L^1(\Omega, M)}(\Theta_{\dot a_0}, \dots, \Theta_{\dot a_{k-1}}) (\rAG).
\]
\item If $R$ is a $k$-ary predicate symbol in $\cL$ and $\dot a_0, \dots, \dot a_{k-1} \in M^\bbA$, then we define
\[
\Boo{R(\dot a_0, \dots, \dot a_{k-1})} \coloneqq \Boo{R^{L^1(\Omega, M)}(\Theta_{\dot a_0}, \dots, \Theta_{\dot a_{k-1}})}.
\]
\end{enumerate}
\end{definition}

We note that the interpretation of $\sfrac{M^\bbA}{\sim}$ as an $\cLR$-structure described above can also be recovered in a more direct and natural way, without relying as much on Lemma~\ref{L.names}. Indeed, if $F \in \cL$ is a $k$-ary function symbol and $\dot a_0, \dots, \dot a_{k-1} \in \sfrac{M^\bbA}{\sim}$, then $\fA F^{M^\ast}(\dot a_0, \dots, \dot a_{k-1}) \in M^\ast $ hence, by Proposition \ref{P.forcing}.\ref{0.forcing}, there exists $\dot b\in M^\bbA$ forced to be equal to $F^{M^\ast}(\dot a_0, \dots, \dot a_{k-1})$, and we can take such $\dot b$ to be the interpretation of $F^{\sfrac{M^\bbA}{\sim}}(\dot a_0, \dots, \dot a_{k-1})$. This is equivalent to what is done in Definition~\ref{def:random}, since~$\dot b$ would be in the same $\sim$-class of $F^{L^1(\Omega, M)}(\Theta_{\dot a_0}, \dots, \Theta_{\dot a_{k-1}}) (\rAG)$.

Similarly, if $R \in\cL$ is a  $k$-ary predicate symbol in $\cL$ and $\dot a_0, \dots, \dot a_{k-1} \in M^\bbA$, then $\fA R^{M^\ast}(\dot a_0, \dots, \dot a_{k-1}) \in [0,1]$. We can then interpret $R^{M^\ast}(\dot a_0, \dots, \dot a_{k-1})$ as an $\bbA$-name for a real number which, applying Lemma \ref{L.names} to $M = [0,1]$ as a metric structure in the empty language, naturally corresponds to some element of $L^1(\Omega, [0,1])$. In hindsight, this can be checked to be, up to a measure zero set, precisely the function $\Boo{R^{L^1(\Omega, M)}(\Theta_{\dot a_0}, \dots, \Theta_{\dot a_{k-1}})}$.

The following is an immediate consequence of Definition \ref{def:random}.

\begin{proposition} \label{prop:random}
Let $M$ be a metric structure in a language $\cL$, suppose that $(\Omega, \mu)$ is a probability measure space and let $\bbA$ be its measure algebra. The map $\Theta$ defined in Lemma \ref{L.names}, paired with the identity map on the auxiliary sort  induces an isomorphism of $\cLR$-structures from $(\widehat M, \widehat \bbA)$ onto $\sfrac{M^\bbA}{\sim}$. In particular, for every  $\phi(\bar x) \in \cL_{\bar x}$ and $\dot a_0, \dots, \dot a_{| \bar x | -1} \in \sfrac{M^\bbA}{\sim}$, we have
\begin{equation} \label{eq:equal_random}
\Boo{\phi(\dot a_0, \dots, \dot a_{| \bar x |-1})} = \Boo{\phi^{L^1(\Omega, M)}(\Theta_{\dot a_0}, \dots, \Theta_{\dot a_{| \bar x |-1}})}.
\end{equation}
 \end{proposition}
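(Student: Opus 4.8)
The statement is essentially built into Definition~\ref{def:random}, so the proof is a matter of assembling facts already at hand. The plan is to check three things: (i) that $\Theta$ descends to a bijection $\sfrac{M^\bbA}{\sim}\to L^1(\Omega,M)$, which simultaneously shows that Definition~\ref{def:random} is well posed; (ii) that clauses (1)--(3) of Definition~\ref{def:random} exhibit the $\cLR$-structure on $\sfrac{M^\bbA}{\sim}$ as the pull-back of $(\widehat M,\widehat\bbA)$ along $\Theta$, so that $\Theta$ paired with the identity on the auxiliary sort is an $\cLR$-isomorphism; and (iii) the displayed identity~\eqref{eq:equal_random}, which is the only part needing an external input. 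For (i): Lemma~\ref{L.names}\ref{4.names} gives $\fA \dot a=\Theta_{\dot a}(\rAG)$, i.e.\ $\Xi\circ\Theta=\id$ on $\sfrac{M^\bbA}{\sim}$, where $\Xi\colon f\mapsto f(\rAG)$; Lemma~\ref{L.names}\ref{1.names} gives $f=\Theta_{f(\rAG)}$ $\muae$, i.e.\ $\Theta\circ\Xi=\id$ on $L^1(\Omega,M)$; and the remark immediately following Lemma~\ref{L.names} (a consequence of~\ref{3.names} and~\ref{4.names}) shows $\fA\dot a=\dot b$ iff $\Theta_{\dot a}=\Theta_{\dot b}$, so $\Theta$ is well defined and injective on $\sim$-classes. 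Consequently the right-hand sides in clauses (1)--(3) of Definition~\ref{def:random} depend only on the $\sim$-classes of the $\dot a_i$, so that definition makes sense.

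For (ii), observe that clauses (1)--(3) of Definition~\ref{def:random} say precisely that the metric, the interpretation of each function symbol, and the interpretation of each $\Boo R$ on $\sfrac{M^\bbA}{\sim}$ are obtained from the corresponding operations of $(\widehat M,\widehat\bbA)=(L^1(\Omega,M),L^1(\Omega,[0,1]),\dots)$ by applying $\Theta$ in the main-sort coordinates (using $F^{L^1(\Omega,M)}(\bar g)(\rAG)=\Xi\bigl(F^{L^1(\Omega,M)}(\bar g)\bigr)$) and the identity in the auxiliary coordinate. Since the auxiliary sort of $\sfrac{M^\bbA}{\sim}$ is literally the same space of random variables $L^1(\Omega,[0,1])$, with the operations of \cite[\S2]{benyaacov2013theories}, used in $(\widehat M,\widehat\bbA)$, the map $\Theta$ together with $\id$ on the auxiliary sort is an isomorphism of $\cLR$-structures from $(\widehat M,\widehat\bbA)$ onto $\sfrac{M^\bbA}{\sim}$.

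For (iii): an isomorphism of $\cLR$-structures preserves every $\cLR$-definable predicate. By \cite[Theorem~3.14]{benyaacov2013theories}, for each $\cL$-formula $\phi(\bar x)$ the operator $\Boo\phi$ is a predicate definable in the minimalist $\cLR$ over the theory of randomizations, a theory of which $(\widehat M,\widehat\bbA)$ is a model by construction (\S\ref{S.random}), and hence of which $\sfrac{M^\bbA}{\sim}$ is a model, being isomorphic to it. In $(\widehat M,\widehat\bbA)$ this definable predicate is, by~\eqref{eq:boo}, the map $\bar g\mapsto[\,\omega\mapsto\phi^M(\bar g(\omega))\,]$, which is what the right-hand side of~\eqref{eq:equal_random} abbreviates; transporting along $\Theta$ then yields~\eqref{eq:equal_random}. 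I do not expect a genuine obstacle: the content is bookkeeping packaged into Definition~\ref{def:random}. The one point requiring care is exactly this last step — only the atomic symbols $\Boo R$ are primitive in the minimalist $\cLR$, so the passage to arbitrary $\cL$-formulas really does rely on the Ben Yaacov--Keisler definability theorem \cite[Theorem~3.14]{benyaacov2013theories} — and one should note that "identity on the auxiliary sort" is honestly an isomorphism of the auxiliary-sort structures, which holds because both are set up as the same space of random variables.
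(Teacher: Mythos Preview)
Your proof is correct and is precisely the unpacking of what the paper records as ``an immediate consequence of Definition~\ref{def:random}'': the bijectivity of $\Theta$ comes from Lemma~\ref{L.names}\ref{4.names}--\ref{1.names}, the $\cLR$-isomorphism is tautological from the pull-back definition, and the passage to arbitrary $\phi$ via \cite[Theorem~3.14]{benyaacov2013theories} is exactly the mechanism the paper has in mind when it notes in \S\ref{S.random} that the additional $\Boo\phi$ are definable in the minimalist $\cLR$. Your care in isolating that the last step genuinely requires the definability theorem is well placed and matches the paper's implicit reasoning.
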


\section{Probable isomorphisms and isomorphisms of $L^1$-structures}\label{S.X->L1}
In this section we prove Theorem \ref{T.isomorphic_randomizations}, relying on the results of the previous section.

\subsection{From probable isomorphisms to isomorphisms of $L^1$-structures}
The first half of Theorem \ref{T.isomorphic_randomizations} is a consequence of the following result.
\begin{theorem} \label{T.extension_to_random}
	 Let $(\Omega, \mu)$ be a probability measure space, let $\bbA$ be its measure algebra and suppose that $M$ and $N$ are metric structures in the same language $\cL$
		\begin{enumerate}
		\item \label{1.e_t_r} If $\fA  ` ` M^\ast \text{ isometrically embeds into } N^\ast$'', then $L^1(\Omega,M)$ isometrically embeds into $L^1(\Omega ,N)$. 
		\item \label{2.e_t_r} If $\fA  ` ` M^\ast \text{ affinely embeds into } N^\ast$'', then $L^1(\Omega,M)$ affinely embeds into $L^1(\Omega ,N)$. 
		\item \label{3.e_t_r} If $\fA   M^\ast \cong N^\ast$, then $L^1(\Omega,M) \cong L^1(\Omega ,N)$. 
	\end{enumerate} 
\end{theorem}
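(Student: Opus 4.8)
The plan is to push a forced embedding of $M^\ast$ into $N^\ast$ through the name correspondence of Lemma~\ref{L.names}, reading it off as a map between the $L^1$-structures. Suppose $\fA `` \Psi \colon M^\ast \to N^\ast$ is an isometric embedding'', and by Proposition~\ref{P.forcing}.\ref{0.forcing} fix an $\bbA$-name $\dot\Psi$ for such a map. Given $f \in L^1(\Omega, M)$, the element $f(\rAG) = \Xi(f) \in M^\bbA$ is a name for an element of $M^\ast$, so $\dot\Psi(f(\rAG))$ is forced to be an element of $N^\ast$; again by Proposition~\ref{P.forcing}.\ref{0.forcing} this has a representative in $N^\bbA$, and we set $\Phi(f) \coloneqq \Theta^N_{\dot\Psi(f(\rAG))} \in L^1(\Omega, N)$, where $\Theta^N$ is the map from Lemma~\ref{L.names} for $N$. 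By Lemma~\ref{L.names}.\ref{4.names} this is well defined on $\sim$-classes, and since $\fA \Theta^N_{\dot\Psi(f(\rAG))} = \dot\Psi(f(\rAG))$ this element does not depend on the chosen representatives.

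Next I would check that $\Phi$ is isometric. For $f, g \in L^1(\Omega, M)$, applying Lemma~\ref{L.names}.\ref{3.names} to the distance predicate of $N$ and using that $\dot\Psi$ is forced to be isometric, we get for each rational $t$ and each $A \in \bbA$ that $d^M(f(\omega), g(\omega)) \le t$ for $\muae \omega \in A$ iff $A \fA d^{M^\ast}(f(\rAG), g(\rAG)) \le \check t$ iff $A \fA d^{N^\ast}(\dot\Psi(f(\rAG)), \dot\Psi(g(\rAG))) \le \check t$ iff $d^N(\Phi(f)(\omega), \Phi(g)(\omega)) \le t$ for $\muae \omega \in A$. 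Taking $A = 1_\bbA$ and varying $t$ over $\bbQ_+$ gives $d^M(f(\omega),g(\omega)) = d^N(\Phi(f)(\omega), \Phi(g)(\omega))$ $\muae$, and integrating over $\Omega$ yields $d^{L^1(\Omega,M)}(f,g) = d^{L^1(\Omega,N)}(\Phi(f),\Phi(g))$. For part~\eqref{3.e_t_r}, surjectivity follows by running the construction for $\dot\Psi^{-1}$ and checking, again via Lemma~\ref{L.names}.\ref{4.names}, that the two maps are mutually inverse; alternatively one observes that $\Phi$ and $\Psi^{-1}$-induced map compose to maps forced to be the identity, hence equal to the identity by Lemma~\ref{L.names}. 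It remains to verify that $\Phi$ is an $\cL$-embedding, i.e.\ commutes with function symbols and preserves predicates: this is where one uses that $\dot\Psi$ is forced to be an $\cL$-embedding of $M^\ast$ into $N^\ast$, together with Lemma~\ref{L.names}.\ref{3.names} applied to an arbitrary formula $\phi$ and the defining formula for the interpretation of $R^{L^1(\Omega, M)}$ and $F^{L^1(\Omega, M)}$ via Theorem~\ref{thm:Los}; for function symbols one also invokes Lemma~\ref{L.names}.\ref{4.names} to identify $\Phi(F^{L^1(\Omega,M)}(\bar f))$ with $F^{L^1(\Omega,N)}(\Phi(\bar f))$ through their common name $\dot\Psi(F^{M^\ast}(\overline{f(\rAG)}))$. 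Parts~\eqref{2.e_t_r} and~\eqref{3.e_t_r} are identical except that one additionally quotes Proposition~\ref{prop:random} (equation~\eqref{eq:equal_random}) to transport affine formulas: since $\Phi$ corresponds under $\Theta$ to the $\cLR$-structure map and $\dot\Psi$ preserves all affine formulas, $\Boo{\phi(\Phi(\bar f))} = \Boo{\phi(\bar f)}$ for all affine $\phi$, which after integrating is exactly affinity of $\Phi$ in the sense of \S\ref{S.Affine}.

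The main obstacle I anticipate is the bookkeeping around names versus $\sim$-classes versus genuine $L^1$-elements: one must be careful that $\dot\Psi(f(\rAG))$ is literally an element of $N^\bbA$ (a name forced into $N^\ast$, not just into the ground-model copy $N$), that the assignment $f \mapsto \Phi(f)$ respects the identifications on both sides, and that ``$\muae$'' statements obtained pointwise for each rational $t$ can be amalgamated into a single full-measure set — this last point is routine since $\bbQ_+$ is countable. A secondary subtlety is that $\dot\Psi$ is only forced to be an embedding of the \emph{completion} $M^\ast$, so every appeal to Lemma~\ref{L.names} must go through $M^\ast$ and $N^\ast$ and the extended interpretations $\phi^{M^\ast}, \phi^{N^\ast}$, never through $M$ or $N$ directly; Lemma~\ref{L.names} is stated precisely in these terms, so this causes no real trouble but requires consistent notation.
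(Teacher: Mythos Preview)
Your proposal is correct and follows essentially the same route as the paper: push the forced map through the name correspondence $\Xi$/$\Theta$ of Lemma~\ref{L.names}, then read off preservation of (atomic, affine) formulas pointwise and integrate. The paper packages the formula-preservation step more compactly via the $\cLR$-structure identification of Proposition~\ref{prop:random} and the $\Boo{\cdot}$ map, rather than your explicit ``for each rational $t$ and each $A\in\bbA$'' argument, but this is a cosmetic difference and your version works just as well.
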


\begin{proof} Suppose that $\dot \Phi$ is an $\bbA$-name for a function $\dot \Phi\colon M^\ast \to N^\ast$. For every $\dot a \in  M^\bbA$ we have $\fA \exists  y \in N^\ast  \dot\Phi(\dot a)= y$. By Proposition \ref{P.forcing}.\eqref{0.forcing} we can fix an $\bbA$-name $\dot \Phi(\dot a) \in N^\bbA$. We can then define a map $\sfrac{M^\bbA}{\sim} \to \sfrac{N^\bbA}{\sim}$ which, with an abuse of notation, we denote again $\dot \Phi$.

Recall the interpretation of $\sfrac{M^\bbA}{\sim}$, and similarly of $\sfrac{N^\bbA}{\sim}$, as $\cLR$-structures from Definition \ref{def:random}, and \eqref{eq:equal_random} in particular. By Lemma \ref{L.names} we have that, given an $\cL$-formula $\phi( \bar x )$ and $\dot a_0, \dots, \dot a_{k-1} \in \sfrac{M^\bbA}{\sim}$, the following are equivalent
\begin{enumerate}[label=(\roman*)]
\item $\Boo{\phi(\dot a_0, \dots, \dot a_{k-1})} = \Boo{\phi(\dot\Phi(\dot a_0), \dots, \dot\Phi(\dot a_{k-1}))}$.
\item $\fA \phi(\dot a_0, \dots, \dot a_{k-1}) = \phi(\dot\Phi(\dot a_0), \dots, \dot\Phi(\dot a_{k-1}))$.
\end{enumerate}

The second condition holds for all atomic formulas if $\dot \Phi$ is an $\bbA$-name for an isometric embedding, and it holds for all affine formulas if it is an $\bbA$-name for an affine embedding. Items \eqref{1.e_t_r} and \eqref{2.e_t_r} then follow by Proposition \ref{prop:random} and Theorem~\ref{thm:Los}.

Finally, if $\fA   M^\ast \cong N^\ast$, then $\dot \Phi \colon \sfrac{M^\bbA}{\sim} \to \sfrac{N^\bbA}{\sim}$ is also surjective, and so \eqref{3.e_t_r} follows.
\end{proof}

With the previous theorem we can deduce Corollary \ref{C1}. 

\begin{corollary} \label{cor:isomorphic_randomizations}
 Let $(\Omega, \mu)$ be a diffuse probability measure space, let $\bbA$ be its measure algebra. Then $M \coloneqq (\bbR, <)$ and $N \coloneqq (\bbR \setminus \{ 0 \}, <)$ are non-isomorphic linear orders such that $L^1(\Omega, M) \cong L^1(\Omega, N)$.
\end{corollary}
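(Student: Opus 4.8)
The plan is to apply Theorem~\ref{T.extension_to_random}\eqref{3.e_t_r}, reducing the statement to a forcing fact in the spirit of \cite[Example~5.1]{laskowski1996forcing}. First, $M$ and $N$ really are non-isomorphic: as abstract linear orders $M=(\bbR,<)$ is Dedekind complete, whereas in $N=(\bbR\setminus\{0\},<)$ the cut $(\{x<0\},\{x>0\})$ has both sides nonempty, no greatest element on the left, no least element on the right, and is not filled; since Dedekind completeness is an isomorphism invariant, $M\not\cong N$. Next, because $M$ and $N$ carry the \emph{discrete} metric they are complete metric spaces in every forcing extension, so in $V[G]$ the names $M^\ast$, $N^\ast$ are (forced to be equal to) $M$, $N$ themselves; and since isometries of discrete metric spaces are exactly bijections, an isomorphism of $M^\ast$ and $N^\ast$ as $\cL$-structures is precisely an order isomorphism of $(\bbR^V,<)$ and $(\bbR^V\setminus\{0\},<)$. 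By Theorem~\ref{T.extension_to_random}\eqref{3.e_t_r} it therefore suffices to prove that $\bbA$ forces $(\bbR^V,<)\cong(\bbR^V\setminus\{0\},<)$.

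To prove the latter, work in a generic extension $V[G]$. Since $(\Omega,\mu)$ is diffuse, $\bbA$ is atomless, so Proposition~\ref{P.forcing}\eqref{5.forcing} yields a set $Z_G\subseteq\bbN$ with $Z_G\in V[G]\setminus V$, hence a real $r\coloneqq\sum_{n\in Z_G}2^{-n-1}\in(0,1)\setminus V$. The key structural observation, valid in $V[G]$, is that every Dedekind gap of $(\bbR^V,<)$ has character $(\omega,\omega)$: a gap $(L,R)$ of $\bbR^V$ restricts to a cut $(L\cap\bbQ,R\cap\bbQ)$ of the \emph{countable} order $\bbQ$, so $L\cap\bbQ$ carries an $\omega$-sequence cofinal in it, and by density of $\bbQ$ in $\bbR^V$ this sequence is cofinal in $L$; symmetrically $R$ is coinitial of type $\omega$. (Concretely, the gaps of $\bbR^V$ in $V[G]$ correspond bijectively to the points of $\bbR^{V[G]}\setminus\bbR^V$.) The same computation applies verbatim to $(\bbR^V\setminus\{0\},<)$, whose gaps correspond to $(\bbR^{V[G]}\setminus\bbR^V)\cup\{0\}$. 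Moreover $(\bbR^V,<)$ now genuinely has a gap, namely $\{x\in\bbR^V:x<r\}$, of the same type $(\omega,\omega)$ as the gap at $0$ of $\bbR^V\setminus\{0\}$.

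Given all this, one builds the isomorphism in $V[G]$ by a back-and-forth of length $|\bbR^V|$: list the two underlying sets in order type $|\bbR^V|$ and, along an increasing chain of partial isomorphisms of size $<|\bbR^V|$, at each stage absorb the next listed point on each side into the appropriate open interval or gap cut out by the partial isomorphism constructed so far. Each step succeeds because both orders are dense without endpoints, between any two of their points there lie both a point and a gap, and---the decisive input---all of their gaps have the single character $(\omega,\omega)$, so any configuration visible from a countable subset of one order can be matched in the other. The main obstacle is precisely making this transfinite construction close, i.e. guaranteeing that the target side is never exhausted; this bookkeeping is the content of the argument in \cite[Example~5.1]{laskowski1996forcing}. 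Note that in $V$ the construction provably cannot be completed---that is exactly the non-isomorphism recorded above, the Dedekind completeness of $\bbR$ obstructing it---and the role of the forcing is solely to endow $\bbR^V$ with gaps of the type $(\omega,\omega)$ needed to absorb the gap at $0$ of $\bbR^V\setminus\{0\}$. Applying Theorem~\ref{T.extension_to_random}\eqref{3.e_t_r} to the resulting forced isomorphism gives $L^1(\Omega,M)\cong L^1(\Omega,N)$, and Corollary~\ref{C1} is the special case $(\Omega,\mu)=([0,1],\text{Lebesgue})$.
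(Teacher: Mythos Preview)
Your argument follows the paper's exactly: check $M\not\cong N$ via Dedekind completeness, note that discreteness forces $M^\ast=\check M$ and $N^\ast=\check N$, invoke Proposition~\ref{P.forcing}\eqref{5.forcing} to get a new real, cite \cite[Example~5.1]{laskowski1996forcing} for the forced isomorphism, and apply Theorem~\ref{T.extension_to_random}\eqref{3.e_t_r}. Where you go beyond the paper is in sketching the Laskowski--Shelah argument as a back-and-forth of length $|\bbR^V|$, and that sketch is not right as stated. The properties you isolate---density, no endpoints, gaps dense and all of type $(\omega,\omega)$---do not guarantee that each extension step succeeds: when you try to add a point $a$ to the domain, the image cut in the target can be tight, with $\sup\phi_\alpha(L)=\inf\phi_\alpha(U)$ landing outside the target order (for instance at $0$ when the target is $\bbR^V\setminus\{0\}$, or at a real in $\bbR^{V[G]}\setminus\bbR^V$), leaving no candidate for $b$. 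Concretely, $\bbR^V$ is \emph{not} an $\eta_1$-order in $V[G]$, so ``any countable configuration can be matched'' fails as a blanket claim. Since you ultimately defer to \cite{laskowski1996forcing} for the construction this does not invalidate your proof, but the informal gloss misidentifies both why the steps succeed and what the obstacle is; the issue is not ``exhausting the target'' but avoiding tight cuts, and a cleaner route is to use the new real $r$ directly to split $\bbR^V$ as an ordered sum and match the resulting pieces with those of $\bbR^V\setminus\{0\}$.
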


%\begin{proof}[Proof of Corollary~\ref{cor:isomorphic_randomizations}   using forcing]
\begin{proof}
		Consider the discrete structures  $M \coloneqq (\bbR,<)$ and  $N \coloneqq (\bbR\setminus \{0\}, <)$.  Since $M$ is Dedekind-complete and $N$ is not, they are not isomorphic. 
	On the other hand, a diffuse measure algebra adds a new real number to the universe by Proposition~\ref{P.forcing}.\ref{5.forcing} and, by \cite[Example 5.1]{laskowski1996forcing}, every forcing $\bbP$ that adds a real also forces $\Vdash_\bbP \check M \cong \check N$. Since $M$ and $N$ are discrete structures we also have $\Vdash_\bbP (M^\ast = \check M) \wedge (N^\ast = \check N)$, hence the conclusion follows by Theorem~\ref{T.extension_to_random}. 
\end{proof}

\subsection{From isomorphisms of $L^1$-structures to probable isomorphisms}\label{S.L1->X}

The following result, combined with Theorem~\ref{T.extension_to_random}, concludes our proof of Theorem \ref{T.isomorphic_randomizations}.
\begin{theorem}\label{T.L1->X}
	 Let $(\Omega, \mu)$ and $(\Xi, \nu)$ be probability measure spaces, and let $\bbA$ and $\bbB$ be the corresponding measure algebras. Suppose that $M$ and $N$ are extremal models of the same non-degenerate, simplicial, affine theory (e.g. single-sorted discrete structures of a complete classical theory). 
	\begin{enumerate}
		\item If $\Phi\colon L^1(\Omega, M)\to L^1(\Xi,N)$ is an affine embedding  then $\Vdash_\bbB$``there is an affine embedding of $M^\ast$ into $N^\ast$''.
		\item If $L^1(\Omega,M)\cong L^1(\Xi,N)$, then $\bbA\cong \bbB$ and $\fA M^\ast \cong N^\ast$. 
	\end{enumerate}
\end{theorem}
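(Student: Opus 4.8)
The plan is to leverage Theorem~\ref{L.DetectingCoordinates} to first extract a measure-algebra embedding from the given $L^1$-map, and then transport the data through the name-correspondence of Lemma~\ref{L.names} and Proposition~\ref{prop:random}. Suppose $\Phi\colon L^1(\Omega,M)\to L^1(\Xi,N)$ is an affine embedding. By Theorem~\ref{L.DetectingCoordinates}.(1) we obtain an isometric linear map $\frs_\Phi\colon L^1(\Omega,[0,1])\to L^1(\Xi,[0,1])$ with $\frs_\Phi\llbracket\phi(\bar a)\rrbracket=\llbracket\phi(\Phi(\bar a))\rrbracket$ for all affine $\phi$ and all tuples $\bar a$ in $L^1(\Omega,M)$; in the original source of that theorem, $\frs_\Phi$ comes from an embedding of measure algebras $\bbA\hookrightarrow\bbB$ (this is the content of the footnote to Theorem~\ref{L.DetectingCoordinates}). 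The point of view to adopt is that an embedding $\bbA\hookrightarrow\bbB$ means that, after forcing with $\bbB$, a $\bbB$-generic filter restricts to an $\bbA$-generic filter; equivalently $V[G_\bbB]\supseteq V[G_\bbA]$, so every $\bbA$-name can be reinterpreted as a $\bbB$-name and the structure $\sfrac{M^\bbA}{\sim}$ embeds (as an $\cLR$-structure) into $\sfrac{M^\bbB}{\sim}$.

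Next I would run the argument of Theorem~\ref{T.extension_to_random} in reverse. Using the isomorphisms of $\cLR$-structures $(\widehat M,\widehat\bbA)\cong\sfrac{M^\bbA}{\sim}$ and $(\widehat N,\widehat\bbB)\cong\sfrac{N^\bbB}{\sim}$ from Proposition~\ref{prop:random}, together with the compatibility of $\Phi$ with $\frs_\Phi$, the map $\Phi$ becomes an affine embedding of $\cLR$-structures $\sfrac{M^\bbA}{\sim}\to\sfrac{N^\bbB}{\sim}$ that intertwines the name-evaluation maps $\Theta$ and is $\frs_\Phi$-equivariant on the auxiliary sort. Unwinding Definition~\ref{def:random}, for each $\dot a\in M^\bbA$ the value $\Phi(\dot a)$ is a $\bbB$-name for an element of $N^\ast$, and the equivariance $\frs_\Phi\llbracket\phi(\bar a)\rrbracket=\llbracket\phi(\Phi(\bar a))\rrbracket$ translates — via Lemma~\ref{L.names}\ref{3.names} applied over $\bbB$ — into the statement $\Vdash_\bbB \phi^{M^\ast}(\dot a)=\phi^{N^\ast}(\Phi(\dot a))$ for every affine $\phi$ and every $\dot a$. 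Since by density and the c.c.c.\ every element of $M^\ast$ in $V[G_\bbB]$ is named by some $\dot a\in M^\bbB$ lying in the image of $\sfrac{M^\bbA}{\sim}$ under the reinterpretation above (or is approximated by such, so completeness closes the gap), we conclude $\Vdash_\bbB$ ``the map $\dot a\mapsto\Phi(\dot a)$ is an affine embedding of $M^\ast$ into $N^\ast$'', which is (1).

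For (2), assume $\Phi$ is an isomorphism. Then Theorem~\ref{L.DetectingCoordinates}.(2) gives that $\frs_\Phi$ is an isomorphism $L^1(\Omega,[0,1])\cong L^1(\Xi,[0,1])$, hence (being induced by a measure-algebra map) $\bbA\cong\bbB$; fix such an isomorphism and identify $\bbA=\bbB$, so $G_\bbA=G_\bbB=:G$. Running the same translation as in (1), now with $\Phi$ surjective, shows that in $V[G]$ the induced map is a bijective affine embedding of $M^\ast$ onto $N^\ast$. To upgrade this to a full isomorphism of $\cL$-structures, I would note that a surjective isometric map between models of an affine theory that preserves all affine formulas is automatically an isomorphism: function symbols are handled because $F^{M^\ast}(\dot a)\mapsto F^{N^\ast}(\Phi(\dot a))$ already holds at the level of names (Definition~\ref{def:random}, clause~(2)), and predicate symbols because $d^{M^\ast}$ is itself among the symbols and an isometric surjection preserving the metric-valued atomic data preserves all of $\cL_{\bar x}$ by the usual density/uniform-continuity argument. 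Thus $\fA M^\ast\cong N^\ast$.

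The main obstacle I anticipate is making precise the passage ``embedding of measure algebras $\Rightarrow$ embedding of the name-structures $\sfrac{M^\bbA}{\sim}\hookrightarrow\sfrac{M^\bbB}{\sim}$'', i.e.\ checking that the reinterpretation of $\bbA$-names as $\bbB$-names is well-defined on $\sim$-classes, compatible with the $\cLR$-structure, and surjective-enough (up to completion) onto $M^\ast$ as computed in $V[G_\bbB]$; this is where one genuinely uses that $\frs_\Phi$ is measure-preserving rather than merely isometric, and where the c.c.c.\ counting from Section~\ref{S.names} must be invoked carefully. The second delicate point is the last step of (2) — deducing a bona fide $\cL$-isomorphism from an affine, isometric, surjective map — which should follow from the fact that atomic $\cL$-formulas of a metric structure are, up to uniform continuity and the metric, captured by affine combinations, but deserves an explicit line.
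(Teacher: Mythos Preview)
Your proposal is correct and follows essentially the same route as the paper: invoke Theorem~\ref{L.DetectingCoordinates} to get the measure-algebra embedding $\frs_\Phi$, transport $\Phi$ through Proposition~\ref{prop:random} to a map of name-structures, and read off the forced affine embedding via Lemma~\ref{L.names}. Your two self-identified obstacles are not genuine: the reinterpretation of $\bbA$-names as $\bbB$-names along a complete subalgebra embedding is a standard forcing move (the paper dispatches it in one sentence), and a surjective affine embedding is automatically an $\cL$-isomorphism since atomic formulas \emph{are} affine and function symbols are already preserved by any embedding of structures---no approximation argument is needed there.
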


%(see the proof of Corollary~\ref{C2} \eqref{1.C2} below) for an explanation of why $L^1$-spaces are suitable for treatment of tracial von Neumann algebras). 

%We remark furthermore that Theorem~\ref{L.DetectingCoordinates} bears a striking resemblance to the fact that reduced products of certain discrete theories (linear orders, trees, random graphs) `recognize coordinates' that was crucial in a proof of structure theorem for isomorphisms between such reduced products using forcing axioms imply (see the discussion following Proposition 6 of \cite{de2023trivial}). 

\begin{proof}
Let $M$ and $N$ be extremal models of the same non-degenerate, simplicial, affine theory and fix an affine embedding $\Phi \colon L^1(\Omega, M) \to L^1(\Xi, N)$. Suppose that $\frs_\Phi\colon L^1(\Omega,[0,1])\to L^1(\Xi,[0,1])$ is the embedding given by Theorem~\ref{L.DetectingCoordinates}, so in particular, for every formula $\phi(\bar x) \in \Laffx$ and every tuple $\bar f\in L^1(\Omega, M)^{| \bar x |}$ we have 
\begin{equation} \label{eq:preserve}
\frs_{\Phi}\llbracket\phi(\bar f)\rrbracket=\llbracket \phi(\Phi(\bar f))\rrbracket. 
\end{equation}

The image of $\bbA$ via $\frs_\Phi$ is a measure (unital) subalgebra of $\bbB$ with the induced measure, and the map $\frs_\Phi$ naturally extends to $\frs_\Phi \colon V^\bbA \to V^\bbB$ so that, for any formula $\psi(\bar x)$ in  the language of ZFC, $A \in \bbA$ and $\dot \alpha_0, \dots, \dot \alpha_{k-1} \in V^\bbA$ then
\[
A \fA \psi(\dot \alpha_0, \dots, \dot \alpha_{k-1}) \iff \frs_\Phi(A) \Vdash_\bbB \psi(\frs_\Phi(\dot \alpha_0), \dots, \frs_\Phi(\dot \alpha_{k-1})).
\]

After identifying $\bbA$ and $M^\bbA$ with their images in $\bbB$ and $M^\bbB$ by $\frs_\Phi$ respectively, we can use Proposition \ref{prop:random} to see that the map $\Phi$ induces an embedding $\dot \Phi \colon \sfrac{M^\bbA}{\sim} \to \sfrac{N^\bbB}{\sim}$ such that, by \eqref{eq:preserve} and \eqref{eq:equal_random}, for every formula $\phi(\bar x) \in \Laffx$ and $\dot a_0, \dots, \dot a_{k-1} \in \sfrac{M^\bbA}{\sim}$
\[
\llbracket\phi(\dot a_0, \dots, \dot a_{k-1})\rrbracket=\llbracket \phi(\dot \Phi(\dot a_0), \dots, \dot \Phi(\dot a_{k-1}))\rrbracket. 
\]
By Lemma \ref{L.names}, this is equivalent to $\fA  \phi(\dot a_0, \dots, \dot a_{k-1}) = \phi(\dot\Phi(\dot a_0), \dots, \dot\Phi(\dot a_{k-1}))$, which entails that $\dot \Phi$ defines an $\bbB$-name for an affine embedding from a dense subalgebra of $M^\ast$ into $N^\ast$ (if $M$ and $N$ are discrete, then it is already an embedding). This embedding is isometric, and its continuous extension sends $M^\ast$ into $N^\ast$. 

If the map  $\Phi \colon L^1(\Omega, M) \to L^1(\Xi, N)$ is an isomorphism then by Theorem~\ref{L.DetectingCoordinates} $\bbA \cong \bbB$, and the resulting $\bbA$-name $\dot \Phi$ for the continuous extension as in the previous paragraph is moreover forced to have dense range, hence an $\bbA$-name for an isomorphism.
\end{proof}

We record a corollary motivated by a question on von Neumann factors (see Corollary~\ref{C3}). 

The minimal number of Lebesgue null sets needed to cover $[0,1]$ (a cardinal characteristic well-studied and denoted $\cov(\Null)$ in the literature, see e.g., \cite{bartoszynski2009invariants}) is at least $\aleph_1$ (the inequality $\aleph_1\leq \cov(\Null)$ is equivalent to the familiar fact that the Lebesgue measure is countably additive) and equal to the continuum if Martin’s Axiom (MA) holds. MA is consistent with the continuum being greater than $\aleph_\alpha$ for any given ordinal~$\alpha$ (\cite[Theorem V.4.1]{kunen2014set}, also \cite[Theorem~3.4]{Sh:PIF}). 
In the following $\bbL$ is the Lebesgue measure algebra on $[0,1]$ and $M^\ast$ stands for the canonical $\bbL$-name for the completion of $M$.  
%While direct integrals of separable von Neumann algebras had been in use for decades (\cite{Tak:TheoryI}), disintegration theory has been extended to the not necessarily separable case only recently in   \cite[\S 8]{benyaacov2024extremal}. 

\begin{corollary} \label{C2} Suppose that $M$ and $N$ are extremal models of the same non-degenerate, simplicial, affine theory and let $\lambda$ denote the Lebesgue measure on $[0,1]$. 
	\begin{enumerate}
		\item \label{1.C2} Then  $L^1([0,1],\lambda, M)\cong L^1([0,1],\lambda,N)$ if and only if $\Vdash_\bbL M^\ast \cong N^\ast$. 
		\item \label{2.C2} Suppose that $L^1([0,1],\lambda,M)\cong L^1([0,1],\lambda, N)$. Let $\kappa$ be the maximum of density characters of $M$ and $N$ in the strong operator topology. If the real line cannot be covered by $\kappa$ Lebesgue null sets, then  $M\cong N$. 
	\end{enumerate} 
\end{corollary}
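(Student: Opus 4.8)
The first assertion I would get for free from what has already been proved. If $\Vdash_\bbL M^\ast\cong N^\ast$, then $L^1([0,1],\lambda,M)\cong L^1([0,1],\lambda,N)$ by Theorem~\ref{T.extension_to_random}.\ref{3.e_t_r}, taking $(\Omega,\mu)=([0,1],\lambda)$ so that $\bbA=\bbL$. Conversely, if $L^1([0,1],\lambda,M)\cong L^1([0,1],\lambda,N)$, then Theorem~\ref{T.L1->X}(2), applied with $(\Omega,\mu)=(\Xi,\nu)=([0,1],\lambda)$, yields $\bbA\cong\bbB$ (both of which are $\bbL$) together with $\fA M^\ast\cong N^\ast$, that is, $\Vdash_\bbL M^\ast\cong N^\ast$. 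So part~\eqref{1.C2} is done, and I will use it freely in part~\eqref{2.C2}.

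For part~\eqref{2.C2}, assume $L^1([0,1],\lambda,M)\cong L^1([0,1],\lambda,N)$ and fix, by part~\eqref{1.C2}, an $\bbL$-name $\dot\Phi$ for an isomorphism $M^\ast\to N^\ast$. Exactly as in the proof of Theorem~\ref{T.extension_to_random}, Lemma~\ref{L.names} and Proposition~\ref{prop:random} turn $\dot\Phi$ into an isomorphism $\Phi$ of the $\cLR$-structures $\sfrac{M^\bbL}{\sim}$ and $\sfrac{N^\bbL}{\sim}$, i.e.\ of the randomizations $(\widehat M,\widehat\bbL)$ and $(\widehat N,\widehat\bbL)$; in particular $\Phi$ is an isometric bijection $L^1([0,1],M)\to L^1([0,1],N)$ with $\Boo{\phi(\Phi(\bar f))}=\Boo{\phi(\bar f)}$ for every $\cL$-formula $\phi$ and tuple $\bar f$, intertwining the function symbols of $\cL$, and for $a\in M$ it sends the constant section with value $a$ to $\Theta_{\dot\Phi(\check a)}$. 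Since $\dot\Phi$ is forced onto $N^\ast$, for each $b\in N$ there is $h_b\in L^1([0,1],M)$ with $\Phi(h_b)$ equal to the constant section with value $b$. I would then pick dense subsets $D\subseteq M$, $E\subseteq N$ with $|D|,|E|\le\kappa$ (this is possible since $\kappa$ bounds the density characters of $M$ and $N$; for tracial von Neumann algebras the strong-operator density character of the unit ball coincides with the metric one used here), fix a representative $g_a\colon[0,1]\to N$ of $\Theta_{\dot\Phi(\check a)}$ for each $a\in D$, and a representative $\tilde h_b\colon[0,1]\to M$ of $h_b$ for each $b\in E$.

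The plan is to exhibit at most $\kappa$ Lebesgue-null subsets of $[0,1]$ off of which the evaluation map $a\mapsto g_a(\omega)$ extends to an isomorphism $M\to N$, and then use the hypothesis that $[0,1]$ is not a union of $\kappa$ null sets (equivalently $\kappa<\cov(\Null)$) to pick an $\omega_0$ avoiding all of them. The \emph{first family} consists, for each $\cL$-formula $\phi(\bar x)$ and each finite tuple $\bar a$ from $D$, of the set $\{\omega:\phi^N(g_{\bar a}(\omega))\ne\phi^M(\bar a)\}$; this is null because, by Proposition~\ref{prop:random} and Lemma~\ref{L.names}, $\Boo{\phi(g_{\bar a})}=\Boo{\phi(\dot\Phi(\check{\bar a}))}=\Boo{\phi(\check{\bar a})}$, and the last quantity is the constant function $\phi^M(\bar a)$. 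There being countably many formulas and at most $\kappa$ tuples, this is $\le\kappa$ sets, and off their union $\bar\Lambda_\omega\colon a\mapsto g_a(\omega)$ preserves the value of every $\cL$-formula on $D$; being in particular $d$-preserving, and using that $D$ is dense in $M$ and $N$ is complete, it extends uniquely to an isometry $\bar\Lambda_\omega\colon M\to N$, which by continuity of formulas is an elementary embedding --- in particular a morphism of $\cL$-structures --- and which, at the cost of at most $\kappa$ further null sets, does not depend on the chosen representatives. The \emph{second family} is $\{\omega:\bar\Lambda_\omega(\tilde h_b(\omega))\ne b\}$ for $b\in E$, again $\le\kappa$ sets. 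Granting that the second family consists of null sets, any $\omega_0$ outside the union of both families gives an elementary embedding $\bar\Lambda_{\omega_0}\colon M\to N$ whose range contains $E$; the range is closed (as $\bar\Lambda_{\omega_0}$ is isometric and $M$ complete), hence equals $\overline E=N$, so $\bar\Lambda_{\omega_0}$ is an isomorphism and $M\cong N$.

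The crux --- and the only place I expect real work --- is showing that the second family consists of null sets, i.e.\ that $\Phi$ is literally the pointwise application of the family $(\bar\Lambda_\omega)_\omega$: for every $f\in L^1([0,1],M)$ one has $\Phi(f)=[\omega\mapsto\bar\Lambda_\omega(\tilde f(\omega))]$. Applying this to $f=h_b$ then gives that $[\omega\mapsto\bar\Lambda_\omega(\tilde h_b(\omega))]$ equals the constant section with value $b$, so $\bar\Lambda_\omega(\tilde h_b(\omega))=b$ for almost every $\omega$, as needed. To establish the identity I would argue that $\Phi'\colon f\mapsto[\omega\mapsto\bar\Lambda_\omega(\tilde f(\omega))]$ is a well-defined isometry $L^1([0,1],M)\to L^1([0,1],N)$ --- $\mu$-measurability of $\omega\mapsto\bar\Lambda_\omega(\tilde f(\omega))$ follows by approximating $f$ by simple sections and using that the $\bar\Lambda_\omega$ are $1$-Lipschitz, and the isometry property because each $\bar\Lambda_\omega$ preserves $d$ --- and that $\Phi$ and $\Phi'$ agree on $D$-valued simple sections, since the name $\dot\Phi$ acts locally: if $f=\sum_{i<n}a_i\mathbf 1_{A_i}$ with all $a_i\in D$, then $A_i\fA f(\rAG)=\check a_i$, hence $A_i\fA\dot\Phi(f(\rAG))=\dot\Phi(\check a_i)=g_{a_i}(\rAG)$, which forces $\Phi(f)=\sum_{i<n}g_{a_i}\mathbf 1_{A_i}=\Phi'(f)$. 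Since $D$-valued simple sections are dense in $L^1([0,1],M)$ (simple sections are dense by Definition~\ref{Def.L1} and $D$ is dense in $M$) and $\Phi,\Phi'$ are both isometric, $\Phi=\Phi'$. This local-to-global step mirrors the proof of Lemma~\ref{L.names}; once it is in place, the cardinal-arithmetic bookkeeping finishing the argument is routine.
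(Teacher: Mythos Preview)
Part~\eqref{1.C2} is correct and matches the paper's proof exactly.

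For part~\eqref{2.C2} you take a genuinely different route. The paper argues via a standard set-theoretic reduction: take an elementary submodel of $H_\theta$ of cardinality $\kappa$ containing $M$, $N$, dense subsets, and the isomorphism $\Phi$; Mostowski-collapse it to a transitive $P$; use the hypothesis to find $r\in[0,1]$ which is Solovay-random over $P$ (there are only $\kappa$ null $G_\delta$ sets coded in $P$, and their union is not all of $[0,1]$); then part~\eqref{1.C2} applied inside $P$ gives an isomorphism $M^{P[r]}\to N^{P[r]}$ in $P[r]$, and one extends by continuity. Your approach instead unpacks the name $\dot\Phi$ into a measurable family $(\bar\Lambda_\omega)_\omega$ and searches for a single $\omega_0$ making $\bar\Lambda_{\omega_0}$ an honest isomorphism. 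This is more elementary (no Mostowski collapse, no Solovay characterization) and closer in spirit to classical direct-integral arguments, at the cost of more bookkeeping; the paper's route hides that bookkeeping inside the forcing machinery.

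There is, however, a gap in your treatment of the second family. Your map $\Phi'(f)=[\omega\mapsto\bar\Lambda_\omega(\tilde f(\omega))]$ is not well-defined as an element of $L^1([0,1],N)$: the global extension $\bar\Lambda_\omega$ exists only for $\omega$ outside the union of the \emph{first} family, and that union of $\kappa$ null sets need not itself be null --- the hypothesis $\kappa<\cov(\Null)$ only guarantees it is not all of $[0,1]$; you would need $\kappa<\mathrm{add}(\Null)$ for it to be null. So the identity $\Phi=\Phi'$ is not meaningful as written, and you cannot directly conclude that $\{\omega:\bar\Lambda_\omega(\tilde h_b(\omega))\ne b\}$ is null.

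The repair is to localize. For each $b\in E$ the essential range of $\tilde h_b$ is separable, so choose a countable $D_b\subseteq D$ dense in it. Off a \emph{countable} union of null sets the map $D_b\ni a\mapsto g_a(\omega)$ is an isometry and extends to $\bar\Lambda_\omega^b$ on $\overline{D_b}$; your simple-section density argument (which is correct) then shows $b=\Phi(h_b)(\omega)=\bar\Lambda_\omega^b(\tilde h_b(\omega))$ for $\omega$ outside a genuine null set $Z_b$. Now for $\omega_0$ outside the first family together with $\{Z_b:b\in E\}$ --- still at most $\kappa$ null sets --- the global $\bar\Lambda_{\omega_0}$ is defined, agrees with each $\bar\Lambda_{\omega_0}^b$ on its domain, and hits every $b\in E$. (Minor point: ``countably many formulas'' presumes a countable language; in general replace this by at most $|\cL|\le\kappa$ atomic conditions.)
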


\begin{proof} The converse direction of \eqref{1.C2} follows by Theorem~\ref{T.extension_to_random}. 
			The direct implication follows from Theorem~\ref{T.L1->X}.	
	
	We next prove \eqref{2.C2}. 	For convenience, we can present $M$ and $N$ as completions of structures whose domain is $\kappa$. Assume that  $\Phi \colon L^1([0,1],\lambda, M)\to  L^1([0,1],\lambda,N)$ is an isomorphism and 
	 let $\theta$ be a sufficiently large regular cardinal, so that all relevant objects (including $\kappa$,  $M$, $N$, and $\Phi$) belong to $H_\theta$,  the structure of all sets whose hereditary closure has cardinality smaller than $\theta$ (\cite[Definition I.13.27]{kunen2014set} or  \cite[\S A.7]{Fa:STCstar}).   
	 
	Let $P$ be the Mostowski's collapse (\cite[Theorem 6.15]{Jech:ST}) of an elementary submodel of $H_\theta$ of cardinality $\kappa$ that contains $M, N$, and includes $\kappa$ (hence it includes dense subsets of $M$ and $N$ of size $\kappa$), and $\Phi$. The union of all null $G_\delta$ sets coded in~$P$ 	is null by our assumption that the real line cannot be covered by $\kappa$ Lebesgue null sets. Any real $r$ outside of this union is random over~$P$ by Solovay's characterization of random reals (\cite{solovay1970model} or  \cite[Lemma~26.4]{Jech:ST}). 
	Therefore,  $P[r]$ is (isomorphic to) the forcing extension of $P$ by the Lebesgue measure algebra. 
	By item \eqref{1.C2}, $P[r]$ contains an isometric isomorphism $\Psi \colon M^{P[r]} \to N^{P[r]}$. We have that $M^{P[r]}$ is dense in $M$ and $N^{P[r]}$ is dense in $N$  (by the choice of $P$, these structures contain dense subsets of $M$ and $N$ dense in the strong operator topology). It follows that the unique continuous extension of $\Psi$ to $M$ gives an isomorphism between $M$ and $N$. 
	\end{proof}

		\subsection*{Ozawa's `exercise'}\label{Ex.Ozawa}
		N. Ozawa's infamous `exercise' on von Neumann algebra tensor products asks  whether for von Neumann factors $M$ and $N$, $L^\infty([0,1])\bar\otimes M\cong L^\infty([0,1])\bar\otimes N$ implies $M\cong N$ (\cite{ozawa2017exercise}; 
		see \cite{gao2023topics} for some partial answers). The following instance of Corollary~\ref{C2} gives some limitations on what a counterexample may look like. 
		
		\begin{corollary} \label{C3} Suppose that $M$ and $N$ are II$_1$ factors. 
			\begin{enumerate}
				\item \label{1.C3} Then  $L^\infty[0,1]\bar\otimes M\cong L^\infty[0,1]\bar\otimes N$ if and only if $\Vdash_\bbL M^\ast \cong N^\ast$, where $\bbL$ is the Lebesgue measure algebra on $[0,1]$. 
				\item \label{2.C3} Suppose that $L^\infty[0,1]\bar\otimes M\cong L^\infty[0,1]\bar\otimes N$. Let $\kappa$ be the maximum of density characters of $M$ and $N$ in the strong operator topology. If the real line cannot be covered by $\kappa$ Lebesgue null sets, then  $M\cong N$. 
			\end{enumerate} 
		\end{corollary}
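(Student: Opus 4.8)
The plan is to read off Corollary~\ref{C3} as the special case of Corollary~\ref{C2} in which the non-degenerate, simplicial, affine theory is the theory $\cL_{\text{TvN}}$ of tracial von Neumann algebras from \cite[\S 29]{benyaacov2024extremal}. Recall from \S\ref{S.Affine} that $\cL_{\text{TvN}}$ is non-degenerate and simplicial and that its extremal models are exactly the tracial factors; in particular a II$_1$ factor $M$, identified with its operator unit ball and equipped with the $1$-norm $\|\cdot\|_{1,\tau}$, is an extremal model of $\cL_{\text{TvN}}$, so that both $M$ and $N$ fall under the hypotheses of Corollary~\ref{C2}.

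The first thing to record is the dictionary between the two formulations. By the remark following Definition~\ref{Def.L1}, the $\cL_{\text{TvN}}$-structure $L^1([0,1],\lambda,M)$ is (the operator unit ball of) $L^\infty[0,1]\bar\otimes M$, and likewise for $N$. Since a bijection between the operator unit balls of two tracial von Neumann algebras is an isomorphism of $\cL_{\text{TvN}}$-structures precisely when it is the restriction of a (necessarily trace-preserving) $\ast$-isomorphism, we obtain
\[
L^\infty[0,1]\bar\otimes M\cong L^\infty[0,1]\bar\otimes N
\quad\Longleftrightarrow\quad
L^1([0,1],\lambda,M)\cong L^1([0,1],\lambda,N)
\]
as $\cL_{\text{TvN}}$-structures. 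The same remarks translate the forcing side: the $\bbL$-name $M^\ast$ for the completion of the operator unit ball of $M$ is forced to be (the operator unit ball of) a tracial von Neumann algebra, the axioms of $\cL_{\text{TvN}}$ being preserved under completion, so that $\Vdash_\bbL M^\ast\cong N^\ast$ in the sense of $\cL_{\text{TvN}}$-structures is exactly the assertion that forcing with $\bbL$ makes the completions of $M$ and $N$ isomorphic von Neumann algebras. With these identifications, part~\eqref{1.C3} is Corollary~\ref{C2}.\eqref{1.C2} verbatim.

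For part~\eqref{2.C3} the only remaining point is to match the cardinal parameter: I claim that $\kappa$ equals the density character of $M$ viewed as the metric structure $(M,\|\cdot\|_{1,\tau})$ on its operator unit ball. Indeed, on operator-norm bounded sets the strong-operator topology coincides with the $\|\cdot\|_{2,\tau}$-metric, which is equivalent to $\|\cdot\|_{1,\tau}$ there by \cite[Lemma~29.1]{benyaacov2024extremal}, and it is standard that the strong-operator density character of $M$ agrees with the density character of its operator unit ball (equivalently, with that of the predual). The same computation applies to $N$, so $\kappa$ is the maximum of the density characters of $M$ and $N$ as $\cL_{\text{TvN}}$-structures, and Corollary~\ref{C2}.\eqref{2.C2} applies directly, giving $M\cong N$ whenever the real line is not a union of $\kappa$ Lebesgue null sets.

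I do not expect a genuine obstacle: all of the mathematical substance already resides in Corollary~\ref{C2}, hence in Theorems~\ref{T.extension_to_random}, \ref{T.L1->X}, and~\ref{L.DetectingCoordinates}. The only steps deserving a line of justification are the two bookkeeping identifications above---``isomorphism of von Neumann algebras'' versus ``isomorphism of operator unit balls as $\cL_{\text{TvN}}$-structures'', and the fact that $M^\ast$ remains (the operator unit ball of) a tracial von Neumann algebra in the generic extension---both of which follow immediately from the axiomatisation in \cite[\S 29]{benyaacov2024extremal}.
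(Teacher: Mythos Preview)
Your proof is correct and follows essentially the same route as the paper: reduce to Corollary~\ref{C2} by invoking that the theory of tracial von Neumann algebras is non-degenerate and simplicial with extremal models the finite factors, and identify $L^\infty[0,1]\bar\otimes M$ with $L^1([0,1],\lambda,M)$ via \cite[Lemma~29.1]{benyaacov2024extremal}. You supply more detail than the paper does (the dictionary between von Neumann algebra isomorphisms and $\cL_{\text{TvN}}$-structure isomorphisms, and the matching of density characters), but the substance is identical; one small slip is that $\cL_{\text{TvN}}$ is the language, not the theory.
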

		
		\begin{proof}  The theory of tracial von Neumann algebras is non-trivial and simplicial, and its extreme models are precisely the finite factors by \cite[Theorem~29.7]{benyaacov2024extremal}.
			Let~$\tau$ denote the tensor product of the Lebesgue measure $\lambda$ on $L^\infty[0,1]$ and the unique trace on $M$, and consider $L^\infty[0,1]\bar \otimes M$ with the $L^1$-norm $\|a\|_1=\tau(|a|)$. By \cite[Lemma 29.1]{benyaacov2024extremal}, the associated metric is compatible with the strong operator topology on bounded balls, which are thus complete. Therefore $L^\infty[0,1]\bar\otimes M$ can be identified with $L^1([0,1], \lambda,M)$. 
			Therefore 	this is a special case of Corollary~\ref{C2}.		\end{proof}

	\section{Tensorial indecomposability of saturated von Neumann algebras} \label{S.vNa}
	In this section we focus on tensorial indecomposability of ultraproducts of von Neumann algebras and prove Theorem~\ref{T.UltrapowerTensoriallyIndecomposable}. The proof relies on various components, including a key result due to Popa from \cite{popa1983orthogonal} (previously used to prove tensorial indecomposability of ultraproducts of II$_1$ factors in \cite{fang2006central}), as well as some facts relying on forcing from previous sections.
	
	\subsection{Tensorial decomposability in the type I case}

This non-technical section contains the example motivating (and promised after) Definition~\ref{Def.prime}. %, and a lemma needed to prove the second part of Theorem~\ref{T.UltrapowerTensoriallyIndecomposable} about ultrapowers of type I von Neumann algebras. \IF{We can probably remove the latter.}

\begin{example}\label{Ex.Prime} Assume the Continuum Hypothesis, CH. Fix  a type II$_1$ tracial von Neumann algebra $(M,\tau)$ with diffuse center. We will prove that if $\cU$ is a nonprincipal ultrafilter on $\bbN$\footnote{This is our only result in which  $\cU$ is required to be a nonprincipal ultrafilter on $\bbN$. In all other results it suffices to have $\cU$ countably incomplete.} then~$M^{\cU}$  absorbs $\ell_\infty(\bbN)$ tensorially. 
	
	Towards this, we claim that, for every $\varepsilon\in (0,1)$, there is a central projection $p$ in~$M$ with trace $\e$ such that $pM^{\cU}$ (equipped with normalized trace $\frac 1{\tau(p)}\tau$) and $M^{\cU}$ are elementarily equivalent. 
	First note the following: if $N$ is a type II$_1$ von Neumann algebra with tracial state $\tau$ and diffuse center, then $N\oplus N$ equipped with the tracial state $(a,b)\mapsto (1-\varepsilon)\tau(a)+\varepsilon \tau(b)$ and~$N$ equipped with $\tau$ are elementarily equivalent. 
	This is because the theories of the direct integral decompositions of $N$ and $N\oplus N$ into II$_1$ factors have the same   distribution of the theories (defined as in \cite[Definition~5.2]{farah2023preservation}) hence  by \cite[Corollary~5.3]{farah2023preservation} they are elementarily equivalent  (see also \cite{gao2024elementary}). Since CH implies $M^\cU$ is saturated, so is $M^\cU\oplus M^\cU$ when equipped with the tracial state $(a,b)\mapsto (1-\varepsilon)\tau(a)+\varepsilon\tau(b)$,  and therefore these two tracial von Neumann algebras are isomorphic for any choice of $\varepsilon>0$. The isomorphism provides a central projection $p$ as required.

	By choosing $\varepsilon_n>0$ so that $\sum_n \varepsilon_n=1$ and repeatedly using the fact from the previous paragraph, we can find a sequence of nonzero central projections~$p_n$ in $M^\cU$ such that $\sum_n p_n=1$ and $p_n M^{\cU}$ is isomorphic~$M^{\cU}$ for all $n$.   Since $\ell_\infty(\bbN)\bar\otimes M^{\cU}$ is isomorphic to $\bigoplus_n M^{\cU}$, the desired conclusion follows. 
\end{example}

%
%Lemma~\ref{L.AbelianCharacterization} below is closely related to the characterization of ultrapowers of abelian tracial von Neumann algebras in \cite[Proposition~4.6]{FaHaSh:Model1}. 
%Recall that we denote by $\mu_\kappa$ the product measure on $\{0,1\}^\kappa$. 	
%
%
%\begin{lemma}\label{L.AbelianCharacterization}
%	Let $A$ be an abelian, tracial, diffuse von Neumann algebra $A$, let $\cU$ be a non-principal ultrafilter over $\bbN$ and set $B \coloneqq A^\cU$.
%	Let $(\kappa_n)_{n \in \bbN}$ be the sequence of cardinals corresponding to the decomposition of $B$ as in Proposition \ref{P.Abelian-tvNa}. Then  $\kappa_n^{\aleph_0}=\kappa_n$, for every $n \in \bbN$.
%	In particular, $B$ tensorially absorbs $L^\infty(\{0,1\}^{2^{\aleph_0}}, \mu_{2^{\aleph_0}})$. \IF{We probably don't need this anymore.}
%\end{lemma}		
%
%\begin{proof}
%	
%	By Proposition~\ref{P.Abelian-tvNa}, $A \cong \bigoplus_{n \in \bbN} L^\infty(\{0,1\}^{\kappa_n}, \mu_{\kappa_n})$ with every $\kappa_n$ infinite. By \cite[Proposition~4.6]{FaHaSh:Model1} we have that
%	\[ B \coloneqq A^\cU \cong \bigoplus_{n \in \bbN} L^\infty(\{0,1\}^{\kappa_n}, \mu_{\kappa_n})^{\cU}. \]
%	
%	Since for every infinite $\kappa$ we have that $L^\infty(\{0,1\}^\kappa, \mu_\kappa)^{\cU} \cong L^\infty(\{0,1\}^{\kappa^{\aleph_0}}, \mu_{\kappa^{\aleph_0}})$ (for details see the proof of \cite[Proposition~4.6]{FaHaSh:Model1}), and since $(\kappa^{\aleph_0})^{\aleph_0}=\kappa^{\aleph_0}$, the conclusion follows. 
%	The last sentence follows by elementary cardinal arithmetic. 
%\end{proof}

	\subsection{Indecomposability by type II$_1$ components}\label{S.Popa}
The following subsection is based on a suggestion by Stefaan Vaes and it is included with his kind permission. The main result is Proposition~\ref{P.CHaar}, stating that countably quantifier-free saturated type II$_1$ von Neumann algebras cannot be decomposed as tensor product of two type II$_1$ von Neumann algebras, and using the key result due to Popa \cite[Corollary~2.6]{popa1983orthogonal}. 

Recall that a unitary $u$ in a II$_1$ factor $(N, \tau)$ is called a \emph{Haar unitary} if $\tau(u^k)=0$ for all $k\geq 1$. 
In the following, we denote by $Z(M)$ the center of a tracial von Neumann algebra $M$, and we let  $E_{Z(M)}$ denote the unique center-valued trace from $M$ onto $Z(M)$ (see e.g. \cite[III.2.5]{blackadar}).

\begin{definition}\label{Def.CHaar}
	Suppose that $M$ is a tracial von Neumann algebra. A unitary $u$ in $M$ is \emph{uniformly Haar} if $E_{Z(M)}(u^k)=0$ for all $k\geq 1$.  
\end{definition}

The terminology \emph{uniformly Haar} is justified by the fact that $E_{Z(M)}(u^k)=0$ for all $k\geq 1$ if and only if $\tau(u^k)=0$  for all tracial states $\tau$ in $M$ and all $k\geq 1$ (see e.g. \cite[\S III.2.5.7]{blackadar}).
In case $M$ is a II$_1$ factor, a unitary is uniformly Haar if and only if it is Haar. More generally, if $M=\int^\oplus_\Omega M_\omega \, d\mu(\omega)$ is a direct integral of II$_1$ factors, then a unitary $u=\int_\Omega u_\omega \, d\mu(\omega)$ in $M$ is uniformly Haar if and only if $u_\omega$ is a Haar unitary in $M_\omega$ for $\mu$-almost every $\omega\in\Omega$. 

\begin{lemma}\label{L.CHaar.0}
	Every type II$_1$ von Neumann algebra contains a uniformly Haar unitary.
	\end{lemma}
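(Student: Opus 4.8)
Write $E:=E_{Z(M)}$ for the center-valued trace. The plan is to produce a unital diffuse abelian subalgebra $A\subseteq M$ on which $E$ restricts to a (scalar-valued) faithful normal tracial state, and then transport a Haar unitary into $M$: since every diffuse abelian von Neumann algebra with a faithful normal probability trace contains a unital copy of $(L^\infty(\bbT),\mathrm{Haar})$, the image $u\in A\subseteq M$ of the generating unitary $z$ will satisfy $E(u^k)=\int_\bbT z^k=0$ for all $k\ge 1$ (and hence, taking adjoints, for all $k\ne 0$), i.e.\ $u$ is uniformly Haar in the sense of Definition~\ref{Def.CHaar}. The only real content is therefore the construction of such an $A$; note that an arbitrary diffuse abelian subalgebra will not do, since e.g.\ $Z(M)$ itself could be diffuse while $E|_{Z(M)}=\id$ is as far from scalar-valued as possible. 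One must build $A$ so that it is ``$E$-independent from the center''.

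\textbf{The dyadic tower.} First I would record two standard facts: any nonzero corner $rMr$ of $M$ is again type II$_1$ (it is finite, being a corner of a finite algebra, and has no nonzero abelian projection, since such a projection would be abelian in $M$); and in a type II$_1$ algebra every projection $r$ can be halved, i.e.\ there is $s\le r$ with $s\sim r-s$ computed inside $rMr$. Combining these with the uniqueness of the center-valued trace on $M$ (so Murray--von Neumann equivalent projections have equal $E$-value, and the partial isometry realizing $s\sim r-s$ lies in $rMr\subseteq M$) gives $E(s)=\tfrac12 E(r)$. Using this I would construct recursively an increasing chain $A_0=\bbC 1\subseteq A_1\subseteq A_2\subseteq\cdots$ of finite-dimensional abelian $*$-subalgebras of $M$ with $A_n$ having minimal projections $(r_\sigma)_{\sigma\in\{0,1\}^n}$ satisfying $E(r_\sigma)=2^{-n}1$: at stage $n\to n+1$ apply the halving fact inside each corner $r_\sigma M r_\sigma$ to get $s_\sigma\le r_\sigma$ with $E(s_\sigma)=2^{-(n+1)}1$, set $q_{n+1}:=\sum_{\sigma} s_\sigma$ (a projection, as the $r_\sigma$ are orthogonal), and let $A_{n+1}:=(A_n\cup\{q_{n+1}\})''$, whose minimal projections are $r_{\sigma 1}:=s_\sigma$ and $r_{\sigma 0}:=r_\sigma-s_\sigma$, each of $E$-value $2^{-(n+1)}1$.

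\textbf{Passing to the closure.} Let $A:=\bigl(\bigcup_n A_n\bigr)''\subseteq M$. Since $E$ is normal and is scalar-valued on the weakly dense $*$-subalgebra $\bigcup_n A_n$ (there $E$ equals the tracial state sending each atom $r_\sigma$ of $A_n$ to $2^{-n}$), $E$ is scalar-valued on $A$; thus $\tau:=E|_A$ is a faithful normal tracial state on the abelian algebra $A$, so $(A,\tau)\cong L^\infty(X,m)$ for a probability space $(X,m)$. This $m$ is atomless: an atom would be dominated by a single minimal projection $r_\sigma$ of each $A_n$, hence would have mass $\le 2^{-n}$ for every $n$. Therefore $(A,\tau)$ is diffuse and, as noted above, contains a unital copy of $(L^\infty(\bbT),\mathrm{Haar})$; the image $u\in A\subseteq M$ of the generating Haar unitary satisfies $E_{Z(M)}(u^k)=0$ for all $k\ge 1$, proving the lemma. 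The step most in need of care is the preservation of the ``scalar on $A$'' property, which is exactly what the halving-in-corners construction secures; the verifications that corners are type II$_1$ and that every projection in a type II$_1$ algebra halves are routine structure theory.
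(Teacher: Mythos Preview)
Your proof is correct and takes a genuinely different route from the paper's. The paper simply cites the (nontrivial) fact that every type II$_1$ von Neumann algebra contains a unital copy of the hyperfinite II$_1$ factor $\cR$, takes a Haar unitary $u\in\cR$, and observes that since every tracial state on $M$ restricts to the unique trace on $\cR$, one has $\tau(u^k)=0$ for all traces $\tau$ on $M$ and all $k\geq 1$, which is equivalent to $E_{Z(M)}(u^k)=0$.

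Your argument is more elementary and more self-contained: rather than invoking the full $\cR$-embedding, you build only what is needed, namely a separable diffuse abelian subalgebra $A$ on which the center-valued trace is scalar, via the dyadic halving construction (which is itself the first ingredient in any proof of the $\cR$-embedding). The paper's approach is shorter on the page but leans on a heavier cited fact; yours unpacks exactly the relevant portion of that fact and stops there. One small remark: your $A$ is countably generated and hence has separable predual, so $(A,\tau)\cong L^\infty([0,1],\lambda)$ on the nose, which makes the existence of the Haar unitary completely immediate and sidesteps any worry about the nonseparable case of your ``diffuse abelian contains $L^\infty(\bbT)$'' claim.
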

\begin{proof}
	Let $\cR$ be the hyperfinite II$_1$ factor and let $u \in \cR$ be a Haar unitary. If $M$ is a type II$_1$ von Neumann algebra, then it contains a unital copy of $\cR$ (see \cite{argerami_overflow} for a detailed proof). We therefore have that $u$  is a uniformly Haar unitary in $M$:  for $k \in \bbN$ we have $E_{Z(M)}(u^k) = 0$ if and only if $\tau(u^k) = 0$ for every trace $\tau$ in $M$, and the conclusion follows since every trace in $M$ restricts to the unique trace $\tau_\cR$ in $\cR$, and $u$ has been chosen so that $\tau_\cR(u^k) = 0$ for every $k \in \bbN$.
		\end{proof}

For countable quantifier-free saturation see \S\ref{S.cqf-saturated}.
\begin{lemma}\label{L.CHaar}
	Suppose that $u$ and $v$ are uniformly Haar unitaries in a countably quantifier-free saturated tracial von Neumann algebra $(M,\tau)$. Then $u$ and $v$ are unitarily equivalent. 
\end{lemma}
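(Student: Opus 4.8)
The goal is to show that two uniformly Haar unitaries $u,v$ in a countably quantifier-free saturated tracial von Neumann algebra $(M,\tau)$ are unitarily equivalent, i.e.\ that there is a unitary $w\in M$ with $wuw^*=v$. The strategy is the standard one for saturated structures: reduce the problem to a \emph{type} (a countable system of $\|\cdot\|_{2,\tau}$-conditions) in one variable $w$, verify that this type is \emph{approximately finitely satisfiable} in $M$, and then invoke countable quantifier-free saturation (\S\ref{S.cqf-saturated}) to obtain an exact solution, which after a further saturation step we may take to be an honest unitary.

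First I would write down the type $\Sigma(w)$ over the coefficients $u,v\in M$ consisting of the conditions $\|w^*w-1\|_{2,\tau}=0$, $\|ww^*-1\|_{2,\tau}=0$ and $\|wuw^*-v\|_{2,\tau}=0$ (equivalently $\|wu-vw\|_{2,\tau}=0$, which is the cleaner polynomial form). A realization of $\Sigma$ is precisely a unitary implementing the equivalence. By countable quantifier-free saturation it suffices to show that for every $n$ there is $w_n$ in the operator unit ball of $M$ with $\|w_n^*w_n-1\|_{2,\tau}<1/n$ and $\|w_nu-vw_n\|_{2,\tau}<1/n$; a routine polar-decomposition/functional-calculus argument then upgrades an approximate solution to an exact unitary solution (again using saturation, or simply noting that an exact solution of the weaker system can be cut down to a unitary). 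So the crux is the \emph{approximate} intertwining: given $\varepsilon>0$, produce $w$ in the unit ball with $\|wu-vw\|_{2,\tau}<\varepsilon$ and $\|w^*w-1\|_{2,\tau}<\varepsilon$.

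This approximate-intertwining statement is exactly where Popa's result enters. The abelian von Neumann algebras $A_u=\{u\}''$ and $A_v=\{v\}''$ are both singly generated by a uniformly Haar unitary. By the remark following Definition~\ref{Def.CHaar} and the direct-integral description there, $A_u\cong A_v\cong L^\infty(\mathbb T,\lambda)\,\bar\otimes\,Z_0$ for a suitable abelian piece; more to the point, both embed as \emph{maximal abelian} or at least as copies of the same abelian algebra carrying the same center-valued moment data. Popa's \cite[Corollary~2.6]{popa1983orthogonal} (the tool cited in \S\ref{S.Popa}) gives, in a tracial von Neumann algebra, an orthogonality/approximation statement allowing one to move one copy of such an abelian subalgebra approximately onto another by an inner unitary, with control in $\|\cdot\|_2$; applying it to $u$ and $v$ yields, for each $\varepsilon$, a unitary $w_\varepsilon\in M$ with $\|w_\varepsilon u w_\varepsilon^* - v\|_{2,\tau}<\varepsilon$. (If Popa's statement is phrased for factors, one first uses the uniformly-Haar hypothesis to see that $u$ and $v$ have matching distribution \emph{over the center} $Z(M)$, which is what lets the factorial statement be integrated over the center of $M$; the center-valued trace $E_{Z(M)}$ from the setup is precisely the bookkeeping device for this.)

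The main obstacle I anticipate is the passage from the factorial version of Popa's corollary to the general tracial (non-factor) case: one must check that ``uniformly Haar'' is genuinely the right hypothesis to guarantee that $u$ and $v$ look the same fiberwise in a direct-integral decomposition of $M$ into factors, and that the inner unitaries $w_\varepsilon$ produced on the fibers can be assembled (measurably) into a single inner unitary of $M$ with uniform $\|\cdot\|_2$-control—this is where the equivalence ``$E_{Z(M)}(u^k)=0$ for all $k$ iff $\tau'(u^k)=0$ for all traces $\tau'$'' does the work, combined with the fact that $M$ here is countably quantifier-free saturated, which makes such measurable-selection/assembly arguments unnecessary: one simply appeals to saturation once the approximate data is in hand. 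I would therefore organize the write-up so that Popa's corollary is quoted in whatever form \cite{popa1983orthogonal} states it, the uniformly-Haar hypothesis is used only to match moments over the center, and saturation is invoked exactly once at the end to turn the resulting sequence of approximate intertwiners into an exact unitary conjugacy.
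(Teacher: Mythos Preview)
Your reduction to a type and the plan to invoke countable quantifier-free saturation once approximate conjugacy is in hand is exactly right, and matches the paper. The gap is in how you propose to obtain the approximate intertwiners.

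You cite Popa's \cite[Corollary~2.6]{popa1983orthogonal} as the source of the approximate unitaries $w_\varepsilon$ with $\|w_\varepsilon u w_\varepsilon^*-v\|_{2,\tau}<\varepsilon$. That is a misreading of what Popa's corollary says: it is an \emph{orthogonality} statement (if a conjugate $wuw^*$ is orthogonal to a subalgebra then $w$ is orthogonal to its normalizer), not a construction of approximate intertwiners. In this paper it is used only in Proposition~\ref{P.CHaar}, \emph{after} Lemma~\ref{L.CHaar}, to derive a contradiction from an exact conjugacy already produced by the lemma. It does not furnish the $w_\varepsilon$ you need, and no amount of ``integrating over the center'' will extract such a statement from it.

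The paper's actual argument for the approximate step is elementary spectral calculus and uses no deep external input. One passes to a separable elementary submodel $M_0\preceq M$ containing $u,v$, decomposes $M_0=\int_\Omega^\oplus M_\omega\,d\mu$ into II$_1$ factors, and notes that the uniformly-Haar hypothesis makes $u_\omega,v_\omega$ Haar in $M_\omega$ for a.e.\ $\omega$. Now discretize: for each $n$ let $f_n\colon\bbT\to\bbT$ be the step function collapsing the circle to the $n$-th roots of unity. Since $u_\omega$ is Haar, the spectral projections of $f_n(u_\omega)$ all have trace $1/n$, and likewise for $f_n(v_\omega)$; in a II$_1$ factor, equal-trace projections are unitarily equivalent, so $f_n(u_\omega)$ and $f_n(v_\omega)$ are conjugate in $M_\omega$. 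Integrating (via the affine \L o\'s theorem for direct integrals) gives $\inf_w\bigl(\|w^*w-1\|_2+\|wf_n(u)w^*-f_n(v)\|_2\bigr)=0$ in $M_0$, hence in $M$, and saturation yields an exact unitary $w_n$ with $w_nf_n(u)w_n^*=f_n(v)$. Since $\|f_n(u)-u\|_2\to 0$ and $\|f_n(v)-v\|_2\to 0$, this is your sequence of approximate intertwiners.
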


\begin{proof}
	It suffices to prove that the type $\bt(x)$ whose (quantifier-free) conditions are $\|x^*x-1\|_2=0$ and $\|xux^\ast -v\|_2=0$ is approximately satisfiable in $M$.
	For this it suffices to prove that there is a sequence of unitaries $(w_n)_{n \in \bbN}$ in $M$ such that $\|w_n u w_n^*-v\|_{2,\tau}\to 0$ as $n\to \infty$. 

Let $M_0$ be an elementary submodel of $M$ with separable predual that contains $u$ and $v$. We thus have $M_0= \int_\Omega^\oplus M_\omega \, d\mu(\omega)$, with $M_\omega$ a II$_1$ factor for $\mu$-almost every $\omega \in \Omega$, and $Z(M_0) \cong L^\infty(\Omega , \mu)$, with $\mu$ being the measure induced by $\tau$ (alternatively, we could also apply disintegration of nonseparable von Neumann algebras as in \cite[\S 8]{benyaacov2024extremal}). In particular, we can assume that $u=\int_\Omega u_\omega\, d\mu(\omega)$ and $v=\int_\Omega v_\omega\, d\mu(\omega)$, where $u_\omega$ and $v_\omega$ are Haar unitaries for $\mu$-almost every $\omega \in \Omega$.

For $n\geq 2$ and $0\leq k<n$ let
\[
J_{n,k}\coloneqq \left\{\exp(i\zeta) \mid \frac{2\pi k} n\leq \zeta<\frac {2\pi(k+1)}n\right\}
\]
	and define $f_n\colon \bbT\to \bbT$ by 
	\[
	f_n(\exp(i\zeta))\coloneqq \exp(2\pi i k/n), \quad \text{if } \zeta\in J_{n,k}. 
	\]
If $\omega \in \Omega$ is such that $M_\omega$ is a factor and both $u_\omega$, $v_\omega$ are Haar unitaries, then $f_n(u_\omega) = \sum_{j < n} \lambda_j p_{j,n}$ and $f_n(v_\omega) = \sum_{j < n} \lambda_j q_{j,n}$, where $\lambda_0, \dots, \lambda_{n-1} \in \bbT$ and
\begin{enumerate}[label=(\roman*)]
\item $\sum_{j < n} p_{j,n} = \sum_{j < n} q_{j,n} = 1_{M_\omega}$,
\item  $\tau_{M_\omega}(p_{j,n}) = \tau_{M_\omega}(q_{j,n}) = \frac1n$, for every $j < n$, where $\tau_{M_\omega}$ is the unique trace on $M_\omega$.
\end{enumerate}
	There exists therefore a unitary $w_{\omega,n} \in M_\omega$ conjugating the tuples $(p_j)_{j < n}$ to $(q_j)_{j < n}$ (see e.g. \cite[Lemma 4.1]{fang2006central}), and as a consequence $w_{\omega,n} f_n(u_\omega) w_{\omega,n}^\ast = f_n(v_\omega)$.
	
	In other words, if $\phi(x,y) \coloneqq \inf_z \| z^\ast z - 1 \|_2 + \| z x z^\ast - y \|_2$, then
	\[
	\phi^{M_\omega}(f_n(u_\omega), f_n(v_\omega)) = 0, \quad
	 \quad\text{for $\mu$-almost every }\omega \in \Omega.
	 \]
	 By \cite[Theorem 8.11]{benyaacov2024extremal} we then get
	\[
	\phi^M(f_n(u),f_n(v)) = \phi^{M_0}(f_n(u),f_n(v)) = \int_\Omega \phi^{M_\omega}(f_n(u_\omega), f_n(v_\omega)))\, d \mu(\omega) = 0.
	\]
	By quantifier-free saturation of $M$ this actually gives a unitary $w_n \in M$ such that $w_n f_n(u) w_n^\ast = f_n(v)$.
	
	 Since $\|f_n(u) -u\|_{2,\tau} \to 0$ and $\|f_n(v)-v\|_{2,\tau}\to 0$ as $n \to \infty$, we therefore get $\|w_n u w_n^*-v\|_2\to 0$ as $n\to \infty$. Finally, by quantifier-free saturation of $M$, $u$ and $v$ are  unitarily conjugate. 
\end{proof}

\begin{proposition}\label{P.CHaar}
	If $(M, \tau)$ is a countably quantifier-free saturated tracial von Neumann algebra of type II$_1$, then $M$ is not isomorphic to the tensor product of two type~II$_1$ von Neumann algebras. 
\end{proposition}

\begin{proof}
	Assume otherwise, so $M\cong P\bar \otimes Q$ with both $P$ and $Q$ of type II$_1$. By Lemma~\ref{L.CHaar.0}, there are uniformly Haar unitaries $u_0 \in P$ and $v_0 \in Q$. Note that both $u \coloneqq u_0 \otimes 1_Q$ and $v \coloneqq 1_P \otimes v_0$ are uniformly Haar in $M$ since $E_{Z(M)}(u) = E_{Z(P)}(u_0) \otimes 1_Q$ and $E_{Z(M)}(v) = 1_P \otimes E_{Z(Q)}(v_0)$.
	
	By Lemma~\ref{L.CHaar}, there is a unitary $w\in M$ such that $wuw^*=v$. In particular, $wuw^*$ is \emph{orthogonal} to $P\bar\otimes 1_Q$, in the sense that $\tau(wuw^\ast a) = 0$ for every $a \in P \bar \otimes 1_Q$.
		By \cite[Corollary~2.6]{popa1983orthogonal}, this implies that $w$ is orthogonal to the normalizer of $P\otimes 1_Q$ in $P\bar \otimes Q$ which, as a von Neumann algebra, generates the whole algebra $ P \bar \otimes Q$, which is a contradiction.
\end{proof}

Popa's \cite[Corollary~2.6]{popa1983orthogonal} has other strong consequences to the structure of countably quantifier-free saturated~II$_1$ factors $M$, usually stated in terms of ultrapowers of II$_1$ factors associated with a nonprincipal ultrafilter on $\bbN$. Such   $M$  cannot be  a crossed product of a diffuse tracial von Neumann algebra by an infinite group, it does not have a Cartan subalgebra (\cite{popa2022topics}), it has no separable maximal abelian subalgebras (\cite{popa2014independence}), and it is not a nontrivial amalgamated free product (see \cite[Proposition 3.16]{gao2024conjugacy}, which also  includes proofs of the previous statements). See also \cite[\S 4.4]{Fa:STCstar} for applications of \cite[Corollary~2.6]{popa1983orthogonal} to \cstar-algebras.

\subsection{Indecomposability}\label{S.Indecomposability}
We are almost ready to prove Theorem \ref{T.UltrapowerTensoriallyIndecomposable}.
	Before doing so, we prove an elementary preliminary lemma. Given a probability measure space $(\Omega, \mu)$, in what follows we use $\tau_\mu$ to denote the trace on $L^\infty(\Omega, \mu)$ induced by~$\mu$.

\begin{lemma}\label{L.commutator.estimate}
Let $(M,\tau)$ be a tracial von Neumann algebra, let $(\Omega, \mu)$ be a probability space. Suppose that $p \in L^\infty(\Omega, \mu) \bar \otimes M$ is a projection and that $w \in L^\infty(\Omega, \mu) \bar \otimes M$ is a unitary. Then $\|[w,p]\|_{2, \tau_\mu \otimes \tau}=1$ if and only if $\|[w(\omega),p(\omega)]\|_{2, \tau}=1$, for $\mu$-almost every $\omega \in \Omega$.  
\end{lemma}

\begin{proof} Only the direct implication requires a proof. 
This follows from the general fact that if $w$ and $p$ are a unitary  and a projection in a tracial von Neumann algebra $(N, \sigma)$, then $\|[w,p]\|_{2,\sigma} \leq 1$. Note first that, since $[w,p]=[w,1-p]$, we may assume $\sigma(p)\leq 1/2$. We moreover have
\begin{align*}
\|[w,p]\|_2^2 &=\tau((wp-pw)^*(wp-pw))\\
& = \tau(2p)-2\tau(pw^*pwp).
\end{align*}
As $pw^*pwp\geq 0$, we deduce that $\|[w,p]\|_2^2 \le 2\tau(p)\leq 1$. 

Now suppose that $w$ and $p$ are a unitary and a projection in $N = L^\infty(\Omega, \mu) \bar \otimes M$. Then $w(\omega)$ and $p(\omega)$ are a unitary and a projection for $\mu$-almost every $\omega\in\Omega$, and, with $f(\omega)=\|[w(\omega),p(\omega)]\|_2^2$, we have
$\|[w,p]\|_{2,\tau_\mu \otimes \tau}^2=\int_\Omega f(\omega)\, d\mu(\omega)$. 
The previous calculation shows that  $0\leq f(\omega)\leq 1$ for $\mu$-almost every $\omega\in\Omega$. 

To complete the proof, it suffices to observe that if $f\colon \Omega\to [0,1]$ is a measurable function such that $\int f\, d\mu=1$ then $f(\omega)=1$ for $\mu$-almost every $\omega\in\Omega$. Assume otherwise. By $\sigma$-additivity of $\mu$ we can fix $n\geq 1$ such that   $\mu(\{\omega: f(\omega)\leq 1-1/n\})\geq 1/m$ for some $m\geq 1$. Then $\int f\, d\mu\leq (1-1/m)+(1-1/n)1/m=1-1/(mn)<1$, contradiction. 
\end{proof}

The following proposition implies Corollary \ref{T.ultrapower} from the introduction, and it is the last ingredient needed to prove Theorem \ref{T.UltrapowerTensoriallyIndecomposable}.
	\begin{proposition}
		\label{L.qf-saturated}
		Suppose that $(P,\tau_P)$ is a tracial type II$_1$ von Neumann algebra and let $(\Omega , \mu)$ be a diffuse probability measure space. Then $M \coloneqq L^\infty(\Omega , \mu) \bar \otimes P$ is not quantifier-free countably saturated.  In particular $M$ cannot be a nontrivial ultraproduct of tracial von Neumann algebras or a relative commutant of a subalgebra with a separable predual in one. 
			\end{proposition}
	
	\begin{proof}
%Assume otherwise and let $M=Q\bar \otimes P$ be a tracial von Neumann algebra with trace $\tau_M$. Since $M$ is not of type I, at least one of $P$ or  $Q$ is not of type I; we may assume that $P$ is not of type I. Since the corner of a quantifier-free countably saturated tracial von Neumann algebra is quantifier-free countably saturated,   by replacing $P$ by such corner, we may assume that it is of type II$_1$, and we denote by $\tau_P$ the trace induced by $\tau_M$. 
%On the other hand, since the closed unit ball of $Q$ is not compact in the 2-norm, there is  a projection $q\in Q$ such that the corner $qQq$ is diffuse (and nontrivial), hence we may assume that $Q$ is diffuse.

Since $L^\infty(\Omega , \mu)$ is diffuse, we can choose pairwise commuting projections $(q_n)_{n \in \bbN}$ in it such that for all disjoint finite subsets $F$ and $G$ of $\bbN$ we have (with the convention that $\prod_{n\in \emptyset} q_n=\prod_{n\in \emptyset} (1-q_n)=1_Q$)
\[
\textstyle\tau_\mu( \prod_{n\in F} q_n\prod_{n\in G} (1-q_n) )=2^{-|F|-|G|}. 
\]

The following part of the proof is based on the proof that the theory of II$_1$ factors has the order property (\cite[Lemma~3.2]{FaHaSh:Model1}). 
Since $P$ is of type II$_1$, it contains a unital copy of the hyperfinite II$_1$ factor $\cR$. 
Identify $\cR$ with $\overline\bigotimes_{n \in \bbN} P_n$, where $P_n\cong M_2(\bbC)$ and identify each $P_n$ with a unital subalgebra of $P$. 
For every $n \in \bbN$, consider the projections $p_n^0$, $p_n^1 \in P_n$ corresponding to
\[
p_n^0 \coloneqq  \begin{pmatrix} 1& 0 \\ 0 & 0 \end{pmatrix}, \quad p_n^1\coloneqq \begin{pmatrix} 1/2& 1/2 \\ 1/2 & 1/2\end{pmatrix}.
\] 
Finally consider the projections in $L^\infty(\Omega, \mu) \bar \otimes \cR \subseteq M$ defined as 
\[
s^0_n \coloneqq q_n\otimes p_n^0+({1}-q_n)\otimes p_n^1, \quad n \in \bbN, 
\]
\[
s^1_n \coloneqq q_n\otimes p_n^1+({1}-q_n)\otimes p_n^0, \quad n \in \bbN.
\]
%Thus $r_n^0$ is a projection whose image under the center-valued trace is the scalar $1/2$. 

Let $\bt(x)$ be the type over $Q\bar\otimes P$ defined as
\[
\bt(x) \coloneqq \{ \|x^*x-1\|_2=0, \, \|[x,s^0_n]\|_2=0, \, \| [x,s^1_n]\|_2=1, \, n \in \bbN \}. 
\]
We show that $\bt$ is consistent in $M$.  Towards this, consider the unitaries in $P_n$
\[
u_n^0\coloneqq\begin{pmatrix} 1 & 0 \\ 0 & -1\end{pmatrix}, \quad 
u_n^1\coloneqq\begin{pmatrix} 0 & 1 \\ 1 & 0\end{pmatrix}, \quad n \in \bbN.
\]
Then $\|[u_n^0, p_n^0]\|_{2,\tau_P}=\|[u_n^1,p_n^1]\|_{2,\tau_P}=0$  and 
$\|[u_n^0, p_n^1]\|_{2,\tau_P}=\|[u_n^1,p_n^0]\|_{2,\tau_P}=1$. 

Since $u_n^i$ and $p_{m}^j$ commute for  all $n\neq m$ and all $i,j = 0,1$, 
the sequence of unitaries
\[
v_n=\prod_{k<n} (q_k \otimes u_k^0+(1-q_k)\otimes u_k^1), \quad n \in\bbN
\]
shows that every finite subset of $\bt(x)$ is satisfiable in $M$.

We deduce that $\bt(x)$ is consistent in $M$. Assume by contradiction that $M$ is countably quantifier-free saturated, and therefore that $\bt(x)$ is realized by some unitary $w \in M$.

We apply now Lemma~\ref{L.names}, and view all $s^i_n$ and $w$  as $\bbA$-names in $P^\bbA$, where $\bbA$ is the measure algebra associated to $(\Omega, \mu)$. After identifying each $q_n$ with the element in $\bbA$ corresponding to its range, by Lemma~\ref{L.names} we get
\begin{equation}
\label{eq.qn}
q_n\fA s_n^0(\rAG)=\check p_n^0,\quad 
\mathbf{1}_\Omega-q_n\fA s_n^1(\rAG)=\check p_n^0, \quad n \in \bbN.
\end{equation}
By  Lemma \ref{L.commutator.estimate} we moreover obtain
\begin{equation*}
\| [w(\omega), s_n^0(\omega) ] \|_{2, \tau_P} = 0, \quad \| [w(\omega), s_n^1(\omega) ] \|_{2, \tau_P} = 1,
\quad\text{for $\mu$-almost every }\omega \in \Omega, \, n \in \bbN.
\end{equation*}
Therefore, Lemma~\ref{L.names}, along with \eqref{eq.qn}, implies
\begin{equation*}
q_n \fA \|[w(\rAG), \check p^0_n] \|_{2, \tau_{P^\ast}} = 0,  \quad\mathbf{1}_\Omega - q_n \fA \|[w(\rAG), \check p^0_n] \|_{2, \tau_{P^\ast}} = 1, \quad n \in \bbN.
\end{equation*}

As a consequence, in the generic extension $V[G]$ we get that the evaluation $w(\dot r_\bbA)_G$ belongs to $P^{V[G]}$, and, more importantly,
\[
[w(\dot r_\bbA)_G, p_n^0] = 0 \text{ if and only if }q_n \in G.
\]
Since $P$ is dense in $P^{V[G]}$, let $w_0 \in P$ be such that $\| w_0 - w(\dot r_\bbA)_G \|_{2, \tau_P^{V[G]}} < 1/4$. We therefore get
\[
Z(G) \coloneqq \{ n \in \bbN : q_n \in G \} = \{ n \in \bbN : \| [w_0, p_n^0] \|_{2, \tau_P} < 1/2 \}.
\]
Since the set on the right-hand side belongs to $V$, we obtain a contradiction by Proposition \ref{P.forcing}.\ref{5.forcing}.
	\end{proof}

	\begin{theorem}
		\label{T.CountablySaturated} Suppose that $M$ is a diffuse, countably quantifier-free  saturated, tracial von Neumann algebra. Then $M$ is tensorially prime if and only if it is not of type I. 
	\end{theorem}

	\begin{proof} We first prove that every countably quantifier-free saturated tracial von Neumann algebra $M$ that is not type I is tensorially prime.
	Assume otherwise, so $M=Q\bar \otimes P$ where each one of $P$ and $Q$ has a unit ball that is not compact in the 2-norm. Since $M$ is not of type I, at least one of $P$ or  $Q$ is not of type I; we may assume that $P$ is not of type I. Since the corner of a countably quantifier-free saturated tracial von Neumann algebra is countably quantifier-free saturated,   by replacing $P$ with its corner, we may assume it is of type II$_1$. 
	Since the unit ball of $Q$ is not compact in the 2-norm, there is  a nonzero projection $q_0\in Q$ such that the corner $q_0Qq_0$ is diffuse. By type decomposition, there is a nonzero projection $q_1\leq q_0$ such that $q_1 Q q_1$ is of type I or of type II$_1$.  We may therefore assume that $Q$ is diffuse and of type I or of type II$_1$. 
	In the former case, Proposition~\ref{L.qf-saturated} implies that $M$ is not countably quantifier-free saturated. 
	In the latter case, Proposition~\ref{P.CHaar} implies that $M$ is not countably quantifier-free saturated. Since both cases lead to contradiction, every countably quantifier-free saturated tracial von Neumann algebra that is not type I is tensorially prime.

			For the second part of the theorem, fix a type I countably quantifier-free saturated tracial von Neumann algebra $M$. By a combination of characterization of type I tracial von Neumann algebras and Proposition~\ref{P.Abelian-tvNa}, we have that $M$ is isomorphic to a direct sum of von Neumann algebras of the form $M_n(A_\kappa)$,  where $n\geq 1$ and $A_\kappa=L^\infty(\{0,1\}^\kappa,\mu_\kappa)$. 
			Since $M$ is countably quantifier-free saturated, so is each of its summands. As $M$ is diffuse, so is each of its summands. Being countably quantifier-free saturated and diffuse, these summands are  necessarily nonseparable.
			For infinite cardinals $\kappa$ and $\lambda$ we have $A_\kappa\bar\otimes A_\lambda\cong A_{\max(\kappa,\lambda)}$. Hence each of the direct summands  of $M$ absorbs $L^\infty[0,1]$ tensorially, and therefore so does $M$.
	\end{proof}
	
		\begin{proof}[Proof of Theorem~\ref{T.UltrapowerTensoriallyIndecomposable}]  	
			Suppose that $M$ is an ultraproduct  of tracial von Neumann algebras associated with a countably incomplete ultrafilter on $\bbN$. Then $M$ is countably quantifier-free saturated, and so is $M\cap N'$ for every \wstar-subalgebra $N$ of $M$ with separable predual. 
Therefore the conclusion follows by Theorem~\ref{T.CountablySaturated}. 
				\end{proof}

		\section{Concluding remarks}\label{S.Concluding}
		
		We ought to mention that this is not the first direct application of forcing to operator algebras. In \cite{ozawa1985nonuniqueness}, M. Ozawa  proved that for every pair of infinite cardinals $\kappa_0<\kappa_1$ there is an \awstar-module $M$ over a commutative \awstar-algebra that has one orthonormal basis of cardinality $\kappa_0$ and one of cardinality $\kappa_1$.  Such \awstar-module is both $\kappa_0$-homogeneous and $\kappa_1$-homogeneous. This confirmed a conjecture of Kaplansky.  
		  The proof uses the L\' evy poset $\bbP$ for collapsing $\lambda$ to $\kappa$ with conditions of cardinality $<\kappa$. Taking $\cB$ to be the complete Boolean algebra of regular open subsets of $\bbP$, the required \awstar{} algebra is obtained directly by Stone and Gelfand--Naimark dualities: it is $C(X)$, where $X$ is the Stone space of $\cB$.

		\subsection*{Probably isomorphic structures}
It is about time that we defined the notion that this paper is about. 

\begin{definition}\label{D.Probably}
		If the Lebesgue measure algebra forces the completions of two structures $M$ and $N$ of the same language to be isomorphic,\footnote{Since the Lebesgue measure algebra is homogeneous as a forcing notion, this does not depend on the choice of the generic filter.} we say that $M$ and $N$ are \emph{probably isomorphic}. 
\end{definition}
 Theorem~\ref{T.isomorphic_randomizations} implies that if $M$ and $N$ are discrete structures, or  extremal models of the same non-degenerate, simplicial theory, then they are probably isomorphic if and only if $L^1([0,1],\lambda,  M) \cong L^1([0,1],\lambda, N)$.
		
		\begin{question}\label{Q1}
			Is there a characterization of theories that admit probably isomorphic, but not isomorphic models? 
		\end{question}
		
		One such theory is the theory of dense linear orders, because by  \cite[Example~5.1]{laskowski1996forcing}, linear orderings $\bbR$ and $\bbR\setminus \{0\}$ are probably isomorphic but not isomorphic. This example was used in the proof of Corollary~\ref{C1}.  By Theorem~\ref{T.isomorphic_randomizations},  Ozawa's `exercise' \cite{ozawa2017exercise} literally asks whether the theory of II$_1$ factors has this property. 
		The study of nonisomorphic models such that a forcing with the countable chain condition makes them isomorphic was initiated in \cite{baldwin1993forcing} and \cite{laskowski1996forcing}  and more recently the analogous question for $\sigma$-closed forcings was studied in \cite{shelah2025twins}. 
		
		Questions analogous to Question~\ref{Q1} can be asked for Cohen forcing and other `nice' forcing notions, although we are not aware of natural analogs of Theorem~\ref{T.isomorphic_randomizations} in other contexts.

		\subsection*{Probably isomorphic ICC groups}
		 
		A discrete group $\Gamma$ has  ICC (infinite conjugacy classes) if every conjugacy class except that of the identity is infinite. A group~$\Gamma$ is ICC if and only if the group von Neumann algebra $L(\Gamma)$ is a II$_1$ factor (see \cite[Proposition 1.3.9]{anantharaman2017introduction}). 
		
		\begin{question} Are there nonisomorphic ICC groups $\Gamma_0$  and $\Gamma_1$ such that the Lebesgue measure algebra forces $\Gamma_0\cong \Gamma_1$?
		\end{question}
		
		An affirmative answer to this question would not outright imply that there are nonisomorphic II$_1$ factors that can be forced to be isomorphic, as it is well-known that group factors associated to  non-isomorphic ICC groups can be isomorphic. For example, each amenable ICC group gives rise to a hyperfinite II$_1$ factor, and there is a unique hyperfinite II$_1$ factor with separable predual (i.e., whose unit ball is separable in the $\|\cdot\|_2$-norm). The question whether all von Neumann factors associated with nonabelian (countable) free groups are isomorphic, is one of the most prominent questions in the theory of von Neumann algebras. Since $L(F_n)$ is isomorphic to the free product of $n$ copies of $L^\infty[0,1]$,  this leads to the question whether for a  given abelian tracial von Neumann algebra $A$ and $m\neq n$ a free product of $m$ copies of~$A$
		can be isomorphic to the free product of $n$ copies of~$A$. As the referee pointed out,  there are many questions of similar flavour, but we decide to stop here.  The case when $A$ is nonseparable has a negative answer by \cite{boutonnet2024non}. 
		
		\subsection*{Continuous ultrapowers}
		The generic section of $A\bar\otimes P$ for an abelian tracial von Neumann algebra $A$ and II$_1$ factor $P$, used implicitly in the proof of Lemma~\ref{L.qf-saturated}, is equal to the completion $P^{V[G]}$ of $P$ in the forcing extension.   
 This construction appears to be related to the continuous ultrapower $M^{/E,\cU}$ as in \cite[Definition~2.1]{gao2024elementary}.\footnote{In this notation $M$ is a direct  integral of tracial von Neumann algebras, $E$ is the conditional expectation onto its center $Z(M)$, and $\cU$ is a character on $Z(M)$.} However, while the Gao--Jekel continuous ultrapower is countably saturated (\cite[Proposition~5.7]{gao2024elementary}), the generic section of $L^\infty[0,1]\bar\otimes P$ is isomorphic to $P$ and therefore not countably saturated unless $P$ is.   %This property was crucial in the proof of Theorem~\ref{T.UltrapowerTensoriallyIndecomposable}.  

\subsection*{Non-structure}
		Shelah’s non-structure theory (\cite{shelah2023general}) was used in  \cite{FaKa:NonseparableII} to define a functor from the category of linear orderings into II$_1$ factors such that sufficiently different linear orderings are sent to nonisomorphic hyperfinite II$_1$ factors. This was used  to prove that in every uncountable density character $\kappa$ there are $2^\kappa$ nonisomorphic hyperfinite II$_1$ factors. 
		However, the nonisomorphic linear orderings such that the Lebesgue measure algebra forces that they are isomorphic from \cite{laskowski1996forcing} and used in Corollary~\ref{C1} are not sufficiently different. More precisely,   being separable, they have the same isomorphism invariant that is usually used to distinguish the corresponding Ehrenfeucht--Mostowski models (see e.g., \cite{shelah2023general} or \cite{FaSh:Dichotomy}). 
		Moreover, \emph{sufficiently different} is defined using an invariant that codes the gap structure of a given linear ordering, and this structure cannot be changed by any Lebesgue measure algebra, or any ccc forcing. It is however not obvious  whether the II$_1$ factors associated with $\bbR$ and $\bbR\setminus \{0\}$ are isomorphic.

	A first-order  theory $T$ has the \emph{independence property} (a common expression is that $T$ does not have  NIP, where NIP stands for `no independence property') if there is a formula $\varphi(\bar x,\bar y)$ such that in every model $M$ of $T$, for every $n$ there are $\bar a_i$, for $i<n$ such that for all $s\subseteq n$ there is $\bar b_s$ such that $M\models \varphi(\bar a_i,\bar b_s)$ if and only if $i\in s$ (\cite{shelah1990classification}).

	The proof of Proposition~\ref{L.qf-saturated} provides some evidence for the following.

	\begin{conjecture}\label{C.IP} If $M$ has the independence property witnessed by an affine quantifier-free formula, $(\Omega, \mu)$ is a diffuse probability space, and $\cU$ is a countably incomplete ultrafilter, $L^1(\Omega,M)^{\cU}$ is not isomorphic to $L^1(\Lambda,N)$ for any diffuse measure space $\Lambda$ and any structure $N$. 
	\end{conjecture}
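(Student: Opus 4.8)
The plan is to argue by contradiction, following the pattern of the proof of Proposition~\ref{L.qf-saturated}. Suppose that $L^1(\Omega,M)^{\cU}\cong L^1(\Lambda,N)$ with $\Lambda$ diffuse, and let $\bbB$ be the (atomless) measure algebra of $\Lambda$. Fix an affine quantifier-free formula $\varphi(x,y)$ witnessing the independence property of $M$; since the independence property is unaffected by interchanging the two groups of variables, I may assume the shattered tuples occupy the second argument of $\varphi$, and to lighten notation I take $x$ and $y$ to be single variables, the general case being identical. Being a nontrivial metric ultrapower, $L^1(\Omega,M)^{\cU}$ is $\aleph_1$-saturated, and hence so is $L^1(\Lambda,N)$; the contradiction will be a countable quantifier-free type over $L^1(\Lambda,N)$ that is finitely satisfiable but not realized. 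The first and most delicate task is to place an $\omega$-indexed $\varphi$-shattering configuration \emph{inside $N$ itself}. Since $M$ has the independence property via $\varphi$, for every finite $S\subseteq\bbN$ it contains a family shattered by every subset of $S\times\{0,1\}$; consequently the countable affine type $p\bigl((x_n^j)_{n\in\bbN,\,j<2}\bigr)$ whose conditions are, for each finite $S\subseteq\bbN$ and each $\eta\colon S\times\{0,1\}\to\{0,1\}$,
\[
\inf_y\Bigl(\sum_{\eta(n,j)=0}\varphi(y,x_n^j)+\sum_{\eta(n,j)=1}\bigl(1-\varphi(y,x_n^j)\bigr)\Bigr)=0
\]
— every displayed connective being affine precisely because $\varphi$ is — is finitely satisfiable in $L^1(\Omega,M)$, as witnessed by constant sections of finite shattering configurations of $M$. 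Hence $p$ is consistent with $\Th\bigl(L^1(\Omega,M)^{\cU}\bigr)$, so by $\aleph_1$-saturation it is realized in $L^1(\Lambda,N)$ by a tuple $(A_n^j)$ together with witnesses $Y_\eta$ satisfying $\varphi^{L^1(\Lambda,N)}(Y_\eta,A_n^j)=\eta(n,j)$. By Theorem~\ref{thm:Los} we get $\int_\Lambda\varphi^N\bigl(Y_\eta(\omega),A_n^j(\omega)\bigr)\,d\nu(\omega)=\eta(n,j)\in\{0,1\}$, so these fibrewise values equal $\eta(n,j)$ outside a null set; fixing a point $\omega^\ast$ in the intersection of the countably many conull sets, the elements $a_n^j:=A_n^j(\omega^\ast)$ and $w_\eta:=Y_\eta(\omega^\ast)$ belong to $N$ — and to $V$, since $L^1(\Lambda,N)\cong L^1(\Omega,M)^{\cU}\in V$ — and $(a_n^j)_{n\in\bbN,\,j<2}$ is shattered in $N$ by every finite selector, with $\varphi^N(w_\eta,a_n^j)=\eta(n,j)$.

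Next I build the offending type. By Proposition~\ref{P.forcing}.\ref{5.forcing} applied to the atomless $\bbB$, fix $(q_n)_{n\in\bbN}\subseteq\bbB$ with $\mu\bigl(\bigcap_{n\in X}q_n\cap\bigcap_{n\in Y}(1-q_n)\bigr)=2^{-|X|-|Y|}$ for disjoint finite $X,Y$, so that $Z(G):=\{n\mid q_n\in G\}$ is not in $V$. In $L^1(\Lambda,N)$ set $c_n:=a_n^0\mathbf 1_{q_n}+a_n^1\mathbf 1_{1-q_n}$ and $c_n':=a_n^1\mathbf 1_{q_n}+a_n^0\mathbf 1_{1-q_n}$, and let
\[
\bt(x):=\bigl\{\,\varphi(x,c_n)=0,\ \ \varphi(x,c_n')=1\ :\ n\in\bbN\,\bigr\},
\]
a countable quantifier-free type over $L^1(\Lambda,N)$. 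It is finitely satisfiable: for finite $S\subseteq\bbN$ the atoms of the subalgebra of $\bbB$ generated by $\{q_n:n\in S\}$ have positive measure and are indexed by patterns $\tau\colon S\to\{0,1\}$; on the atom for $\tau$ each $c_n$ and each $c_n'$ is constantly one of $a_n^0,a_n^1$, and assigning there the constant value $w_{\eta_\tau}$, where $\eta_\tau$ is the selector matching $\tau$, yields by Theorem~\ref{thm:Los} a simple section realizing $\{\varphi(x,c_n)=0,\varphi(x,c_n')=1:n\in S\}$.

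Finally I derive the contradiction. If $w\in L^1(\Lambda,N)$ realized $\bt$, I would identify $L^1(\Lambda,N)$ with $\sfrac{N^\bbB}{\sim}$ through Lemma~\ref{L.names} and Proposition~\ref{prop:random} and read $w$ as a $\bbB$-name $w(\dot r_\bbB)$ for an element of $N^\ast$. From $\varphi^{L^1(\Lambda,N)}(w,c_n)=0$ and $\varphi^{L^1(\Lambda,N)}(w,c_n')=1$, Theorem~\ref{thm:Los} gives $\varphi^N(w(\omega),c_n(\omega))=0$ and $\varphi^N(w(\omega),c_n'(\omega))=1$ for $\muae\omega$, and restricting to $q_n$ and to $1-q_n$ and invoking Lemma~\ref{L.names}.\ref{3.names} for $\varphi$ and for $1-\varphi$ yields
\[
q_n\Vdash_\bbB\varphi^{N^\ast}\bigl(w(\dot r_\bbB),\check a_n^0\bigr)=0
\qquad\text{and}\qquad
1-q_n\Vdash_\bbB\varphi^{N^\ast}\bigl(w(\dot r_\bbB),\check a_n^0\bigr)=1 .
\]
Thus in any generic extension $V[G]$ one has $\varphi^{N^{V[G]}}\bigl(w(\dot r_\bbB)_G,a_n^0\bigr)=0$ exactly when $q_n\in G$, so $Z(G)=\bigl\{n\mid\varphi^{N^{V[G]}}(w(\dot r_\bbB)_G,a_n^0)=0\bigr\}$. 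Since $N$ is dense in $N^{V[G]}$ and $\varphi$ has a fixed modulus of uniform continuity, I can choose $w_0\in N$ with $d\bigl(w_0,w(\dot r_\bbB)_G\bigr)$ so small that $\varphi^{N^{V[G]}}(w_0,a_n^0)<\tfrac12$ whenever the above value is $0$ and $>\tfrac12$ whenever it is $1$, uniformly in $n$; then $Z(G)=\{n\mid\varphi^N(w_0,a_n^0)<\tfrac12\}$ is computed from $w_0\in N\subseteq V$ and $(a_n^0)_n\in V$, hence lies in $V$, contradicting $Z(G)\notin V$. So $\bt$ is not realized, contradicting the saturation of $L^1(\Lambda,N)$, and the conjecture follows.

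The main obstacle, as indicated, is the first step: securing an $\omega$-indexed shattering configuration genuinely inside $N$ (not merely in an elementary extension or in $L^1(\Lambda,N)$) and one reacting to $\varphi$ with the exact values $0$ and $1$. The \L o\'s-plus-saturation transfer above does this, but it leans on the ultrapower hypothesis (for the needed saturation) and on $\varphi$ being affine, so that $1-\varphi$ is affine and the type $p$ is a legitimate affine type. Weakening ``independence property witnessed by an affine quantifier-free formula'' to a version with a gap rather than the exact values $0,1$ would force one to encode the gap through the structure of the auxiliary sort of $\cLR$, which seems to be the genuinely hard point left open. A secondary, routine verification is that $L^1(\Omega,M)^{\cU}$ is $\aleph_1$-saturated in the affine framework, which follows from the corresponding statement for the metric ultrapower since affine formulas form a sublanguage of the continuous ones.
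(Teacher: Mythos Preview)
This statement is a \emph{conjecture} in the paper; the authors offer no proof, only the remark that Proposition~\ref{L.qf-saturated} ``provides some evidence.''  So there is no paper proof to compare against---your argument must stand on its own.

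The strategy is natural, and the second half (building $\bt(x)$ from the $q_n$'s, the finite-satisfiability computation, and the forcing contradiction via Lemma~\ref{L.names} and uniform continuity) is correct and tracks the paper's proof of Proposition~\ref{L.qf-saturated} closely.  The gap is exactly where you locate it but more fatal than you let on.  In continuous logic the independence property does \emph{not} promise exact values $0$ and $1$: one only gets reals $r<s$ and, for each finite $n$, tuples with $\varphi\le r$ on one side and $\varphi\ge s$ on the other.  Your type $p$ asks that $\inf_y\bigl(\sum_{\eta(n,j)=0}\varphi(y,x_n^j)+\sum_{\eta(n,j)=1}(1-\varphi(y,x_n^j))\bigr)=0$, and under the gap version the inner sum is bounded below by a fixed positive constant on any witness, so $p$ is not even approximately finitely satisfiable in $L^1(\Omega,M)$ and the transfer to $N$ never begins.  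An affine rescaling $(\varphi-r)/(s-r)$ does not help, since it may leave $[0,1]$ and truncation is not affine.  Worse, the exact $0/1$ values are what the rest of the argument feeds on: only from $\int_\Lambda\varphi^N\,d\nu\in\{0,1\}$ can you conclude pointwise $0$ or $1$ almost everywhere and thereby land genuine shattering parameters inside $N$.  Proposition~\ref{L.qf-saturated} escapes this because the commutator formula there really does hit $0$ and $1$ exactly (Lemma~\ref{L.commutator.estimate}); for a general affine $\varphi$ witnessing IP no such extremality is available, and this is precisely why the statement remains a conjecture rather than a theorem.
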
	
	
	\appendix
	
		\section{Ioana's proof of Theorem~\ref{T.UltrapowerTensoriallyIndecomposable}} \label{S.def/rg}

	Proposition~\ref{P.def/rg} subsumes Proposition~\ref{P.CHaar} and Proposition~\ref{L.qf-saturated}. Its proof, due to Adrian Ioana,  uses only standard methods of Popa's deformation/rigidity theory. Together with the first paragraph of the proof of Theorem~\ref{T.CountablySaturated} it gives a proof of Theorem~\ref{T.UltrapowerTensoriallyIndecomposable}.   	
	
		\begin{proposition}\label{P.def/rg}
Suppose that $(P,\tau_P)$ and $(Q, \tau_Q)$ are diffuse tracial von Neumann algebras, and assume that $P$ is not of type I. Then the tensor product von Neumann algebra $M=P\bar\otimes Q $ is not countably quantifier-free saturated. 		
\end{proposition}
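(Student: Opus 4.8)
The plan is to exhibit a single explicit countable quantifier-free type over $M$, show it is finitely satisfiable in $M$, and then use Popa's deformation/rigidity techniques to show it is not realized, contradicting countable quantifier-free saturation. First I would reduce to the case that $P$ is of type II$_1$: since a corner of a countably quantifier-free saturated tracial von Neumann algebra is again countably quantifier-free saturated, and since $P$ is tracial and not of type I, hence has a nonzero type II$_1$ central summand $zP$ with $z\in Z(P)$, one may replace $M$ by the corner $(z\otimes 1)M(z\otimes 1)=zP\bar\otimes Q$. Then fix a unital copy of the hyperfinite II$_1$ factor $R=\overline\bigotimes_{n\in\bbN}P_n$, $P_n\cong M_2(\bbC)$, inside $P$ (Lemma~\ref{L.CHaar.0}), and, using that $Q$ is diffuse, fix a ``dyadically independent'' sequence of projections $(q_n)_{n\in\bbN}$ inside a diffuse abelian von Neumann subalgebra of $Q$, i.e.\ $\tau_Q(\prod_{n\in F}q_n\prod_{n\in G}(1-q_n))=2^{-|F|-|G|}$ for all disjoint finite $F,G\subseteq\bbN$.

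Next I would take the type $\mathbf t(x)$ from the proof of Proposition~\ref{L.qf-saturated}: with the projections $p_n^0,p_n^1\in P_n$ defined there and $s_n^0=q_n\otimes p_n^0+(1-q_n)\otimes p_n^1$, $s_n^1=q_n\otimes p_n^1+(1-q_n)\otimes p_n^0$ the corresponding projections in $Q\bar\otimes R\subseteq M$, set
\[
\mathbf t(x)\coloneqq\{\|x^*x-1\|_2=0\}\cup\{\|[x,s_n^0]\|_2=0:n\in\bbN\}\cup\{\|[x,s_n^1]\|_2=1:n\in\bbN\}.
\]
Finite satisfiability of $\mathbf t$ in $M$ is witnessed, exactly as there, by the unitaries $v_N=\prod_{k<N}(q_k\otimes u_k^0+(1-q_k)\otimes u_k^1)$. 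Everything up to this point merely rephrases the first half of the proof of Proposition~\ref{L.qf-saturated}; the point is to replace its forcing-based conclusion by an operator-algebraic one.

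Assume then that $M$ is countably quantifier-free saturated and let $w\in M$ be a unitary realizing $\mathbf t$, so $[w,s_n^0]=0$ and $\|[w,s_n^1]\|_2=1$ for all $n$. Choosing $F_n\in P_n$ to be the self-adjoint traceless unitary with $F_np_n^0F_n=p_n^1$ and $F_np_n^1F_n=p_n^0$, one has $(1\otimes F_n)s_n^0(1\otimes F_n)=s_n^1$, so expanding $[w,s_n^1]=[w,1\otimes F_n]s_n^0(1\otimes F_n)+(1\otimes F_n)[w,s_n^0](1\otimes F_n)+(1\otimes F_n)s_n^0[w,1\otimes F_n]$ and using $[w,s_n^0]=0$ forces $\|[w,1\otimes F_n]\|_2\ge\tfrac12$ for every $n$. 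Thus $w$ centralizes the diffuse abelian subalgebra $A\coloneqq W^*(s_n^0:n\in\bbN)\subseteq M$ yet fails, uniformly in $n$, to commute with the sequence $(1\otimes F_n)_n$ of traceless unitaries of the copy of $R$ inside $M$ whose members conjugate $A$ onto $W^*(s_n^1:n)$. The plan is to feed this ``Bernoulli-type'' configuration into a malleable-deformation argument for that copy of $R$---using the flip $s$-malleable deformation $(\alpha_t)_{t\in\bbR}$ of $\widetilde R=\overline\bigotimes_n(P_n\bar\otimes P_n)\supseteq R$ together with transversality and Popa's orthogonality result \cite[Corollary~2.6]{popa1983orthogonal}---to conclude that such a $w$ would have to intertwine $A$ into a position orthogonal to $P\bar\otimes 1_Q$, and hence that $w$ is orthogonal to the von Neumann algebra generated by the normalizer of $P\bar\otimes 1_Q$ in $M$; since that normalizer contains $P\bar\otimes 1_Q$ and all unitaries $1_P\otimes v$ with $v$ a unitary of $Q$, it generates $M$, so $w$ would be orthogonal to all of $M$---impossible for a unitary.

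The hard part is exactly this last step: getting the deformation/rigidity argument to bite \emph{uniformly} in the type of $Q$. When $Q$ has a type II$_1$ corner one can instead cut $M$ down by a projection of $Q$ and quote Proposition~\ref{P.CHaar} directly (via Lemma~\ref{L.CHaar} and \cite[Corollary~2.6]{popa1983orthogonal}); but when $Q$ is of type I---say $Q$ diffuse abelian---$Q$ carries no uniformly Haar unitary, so the strategy of Proposition~\ref{P.CHaar} (conjugating a uniformly Haar unitary of $P$ onto one of $Q$) is unavailable, and one genuinely has to deform $R\subseteq P$; this is where the hypothesis that $P$ is not of type I is used. A secondary technical point is that $P$ and $Q$ need not be separable: this should be handled by replacing $M$ by a separable elementary submodel containing $R$, the $q_n$, and $w$ (or by a direct-integral decomposition over the center, as in the proof of Lemma~\ref{L.CHaar}), reducing the crucial estimates to $\mu$-almost-everywhere statements about II$_1$ factor fibers.
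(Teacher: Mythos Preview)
Your plan sets up the same type $\mathbf t(x)$ as in Proposition~\ref{L.qf-saturated} and correctly reproduces finite satisfiability and the estimate $\|[w,1\otimes F_n]\|_2\ge\tfrac12$. The gap is the last step: the sketch of a malleable-deformation argument is not an argument. You do not explain which deformation of which subalgebra you apply, what rigidity input (spectral gap? property (T)?) controls the image of $w$, or how the conclusion ``$w$ is orthogonal to the normalizer of $P\bar\otimes 1_Q$'' follows. Popa's \cite[Corollary~2.6]{popa1983orthogonal} needs a unitary $u\in P\bar\otimes 1_Q$ with $wuw^*$ orthogonal to $P\bar\otimes 1_Q$; you have not produced one, and the lower bound on $\|[w,1\otimes F_n]\|_2$ does not supply it. As written this is a hope, not a proof.

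The paper's route is entirely different and much shorter, and it sidesteps exactly the difficulty you flag when $Q$ is type~I. You worry that in that case ``$Q$ carries no uniformly Haar unitary'' so the strategy of Proposition~\ref{P.CHaar} fails; but the paper does not try to conjugate $w_0\otimes 1$ onto $1\otimes w_1$. Instead it observes that any diffuse $Q$ contains an ordinary Haar unitary $w_1$, and that $u_1\coloneqq w_0\otimes w_1$ is \emph{uniformly} Haar in $M$ simply because $E_{Z(M)}(u_1^k)=E_{Z(P)}(w_0^k)\otimes E_{Z(Q)}(w_1^k)=0$, the first factor already vanishing. Now Lemma~\ref{L.CHaar} gives a unitary $v$ with $v(w_0\otimes w_1)v^*=w_0\otimes 1$, hence $\|E_{P\otimes 1}(v(w_0^k\otimes w_1^k)v^*)\|_2=1$ for all $k$. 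A two-line density computation on elementary tensors, using only that $w_1^k\to 0$ weakly, shows $\|E_{P\otimes 1}(a(w_0^k\otimes w_1^k)b)\|_2\to 0$ for every $a,b\in M$; taking $a=v$, $b=v^*$ gives the contradiction. No Bernoulli deformation, no intertwining, no case split on the type of $Q$.
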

		
\begin{proof}	
		Assume otherwise. By Lemma~\ref{L.CHaar.0} choose a uniformly Haar unitary  $w_0 \in P$  and a Haar unitary  $w_1\in Q$. We claim that both $u_0\coloneqq w_0\otimes 1_Q$  and $u_1\coloneqq w_0 \otimes w_1$ are uniformly Haar unitaries in $M$. For $u_0$, this is shown in the proof of Proposition~\ref{P.CHaar}. For $u_1$, this follows by observing that, since $E_{Z(P)}(w_0^k)=0$, we have    $E_{Z(M)}(u_1^k)=E_{Z(P)}(w_0^k)\otimes E_{Z(Q)}(w_1^k)=0$ for all $k\geq 1$. 
		
		By countable quantifier-free saturation, Lemma~\ref{L.CHaar}, implies that $u_0$ and $u_1$ are unitarily equivalent. Fix a unitary $v\in M$ such that $vu_1 v^*=u_0$. Then $v(w_0\otimes w_1)v^*=w_0\otimes 1_Q$, hence $v(w_0^k\otimes w_1^k)v^*=w_0^k\otimes 1_Q$ for every $k\geq 1$. This implies that $\|E_{P\otimes 1_Q}(v(w_0^k\otimes w_1^k)v^*)\|_{2, \tau_P}=1$ for every $k\geq 1$, where $E_{P \otimes 1_Q} \colon M \to P$ is the unique conditional expectation mapping $a \otimes b$ to $\tau_Q( b) a$.
		
		\begin{claim} \label{claim} For all $ a,b \in M$ we have  $\lim_{k\to \infty}\|E_{P\otimes 1}(a(w_0^k\otimes w_1^k)b)\|_{2,\tau_P}=0$.
	\end{claim}
	\begin{proof}
		It suffices to prove this for a $\|\cdot\|_{2, \tau_P \otimes \tau_Q}$-dense set of pairs $a,b$ of elements in~$M$, and by linearity it suffices to prove the claim for elementary tensors.  
We may therefore assume that $a=a_0\otimes a_1$ and $b=b_0\otimes b_1$ for some $a_0, b_0 \in P$ and $a_1, b_1 \in Q$. Then, for every $k \in \bbN$, we have $a(w_0^k\otimes w_1^k)b=(a_0w_0^kb_0)\otimes (a_1w_1^kb_1)$, hence 
		\[
		E_{P\otimes 1}(a(w_0^k\otimes w_1^k)b)=\tau_Q(a_1w_1^kb_1) a_0w_0^kb_0.
		\] 
		Since~$w_1$ is a Haar unitary, by a standard and easily checked fact $w_1^k$ converges to $0$ weakly, therefore $\lim_{k\to \infty}\tau_Q(a_1w_1^kb_1)=\langle w_1^k,b_1 a_1^\ast\rangle_{\tau_Q}=0$. Together with the previous calculation this implies 
		\[
		\lim_{k\to \infty}\|E_{P\otimes 1}(a(w_0^k\otimes w_1^k)b)\|_{2,\tau_P}=0.
		\]
		\end{proof}
		
		 The instance of the claim when $a=v$ and $b=v^*$ gives contradiction and completes the proof.  
		 \end{proof} 
		 
		 As Adrian Ioana also pointed out, Claim \ref{claim} has another interesting consequence stated in terms of Popa's deformation and rigidity theory (see e.g., \cite[\S 2]{popa2006strong}).  
		 Using the terminology and notation from the proof of Proposition~\ref{P.def/rg},  Claim \ref{claim} shows that $\wst(w_0\otimes w_1)$ does not intertwine into $P\otimes 1_Q$, and therefore not into $\wst(w_0\otimes 1_Q)$, inside $M$.

\bibliographystyle{amsalpha}
\bibliography{Bibliography}

\providecommand{\bysame}{\leavevmode\hbox to3em{\hrulefill}\thinspace}
\providecommand{\MR}{\relax\ifhmode\unskip\space\fi MR }
% \MRhref is called by the amsart/book/proc definition of \MR.
\providecommand{\MRhref}[2]{%
  \href{http://www.ams.org/mathscinet-getitem?mr=#1}{#2}
}
\providecommand{\href}[2]{#2}
\begin{thebibliography}{BYBHU08}

\bibitem[AP17]{anantharaman2017introduction}
C.~Anantharaman and S.~Popa, \emph{An introduction to {II}$_1$ factors},
  http://www.math.ucla.edu/~popa/Books/IIun.pdf, 2017.

\bibitem[Bar09]{bartoszynski2009invariants}
T.~Bartoszynski, \emph{Invariants of measure and category}, Handbook of Set
  Theory (M.~Foreman and A.~Kanamori, eds.), Springer, 2009, pp.~491--555.

\bibitem[BDIP24]{boutonnet2024non}
R.~Boutonnet, D.~Drimbe, A.~Ioana, and S.~Popa, \emph{Non-isomorphism of {$A^{*
  n}$}, $2\leq n\leq \infty$, for a non-separable abelian von {N}eumann algebra
  {$A$}}, Geometric and Functional Analysis \textbf{34} (2024), no.~2,
  393--408.

\bibitem[Bla06]{blackadar}
B.~Blackadar, \emph{Operator algebras}, Encyclopaedia of Mathematical Sciences,
  vol. 122, Springer-Verlag, Berlin, 2006, Theory of \cstar-algebras and von
  Neumann algebras, Operator Algebras and Non-commutative Geometry, III.

\bibitem[BLS93]{baldwin1993forcing}
J.T. Baldwin, M.C. Laskowski, and S.~Shelah, \emph{Forcing isomorphism}, The
  Journal Symb. Logic \textbf{58} (1993), no.~4, 1291--1301.

\bibitem[BY13]{benyaacov2013theories}
I.~Ben~Yaacov, \emph{On theories of random variables}, Israel J. Math.
  \textbf{194} (2013), 957--1012.

\bibitem[BYBHU08]{benyaacov2008model}
I.~Ben~Yaacov, A.~Berenstein, C.W. Henson, and A.~Usvyatsov, \emph{Model theory
  for metric structures}, London Mathematical Society Lecture Note Series
  \textbf{350} (2008), 315.

\bibitem[BYIT24]{benyaacov2024extremal}
I.~Ben~Yaacov, T.~Ibarluc{\'\i}a, and T.~Tsankov, \emph{Extremal models and
  direct integrals in affine logic}, arXiv preprint arXiv:2407.13344 (2024).

\bibitem[BYK09]{benyaacov2009randomizations}
I.~Ben~Yaacov and H.J. Keisler, \emph{Randomizations of models as metric
  structures}, Confluentes Mathematici \textbf{1} (2009), no.~02, 197--223.

\bibitem[Far26]{Fa:STCstar}
I.~Farah, \emph{Combinatorial set theory and \cstar-algebras}, second ed.,
  Springer Monographs in Mathematics, Springer, 2026.

\bibitem[FG24]{farah2023preservation}
I.~Farah and S.~Ghasemi, \emph{Preservation of elementarity by tensor products
  of tracial von {N}eumann algebras}, Pacific J. Math. \textbf{332} (2024),
  no.~1, 91--113.

\bibitem[FGL06]{fang2006central}
J.~Fang, L.~Ge, and W.~Li, \emph{Central sequence algebras of von {N}eumann
  algebras}, Taiwanese J. Math. \textbf{10} (2006), no.~1, 187--200.

\bibitem[FHS13]{FaHaSh:Model1}
I.~Farah, B.~Hart, and D.~Sherman, \emph{Model theory of operator algebras {I}:
  Stability}, Bull. London Math. Soc. \textbf{45} (2013), 825--838.

\bibitem[FHS14]{FaHaSh:Model2}
\bysame, \emph{Model theory of operator algebras {II}: Model theory}, Israel J.
  Math. \textbf{201} (2014), 477--505.

\bibitem[FK15]{FaKa:NonseparableII}
I.~Farah and T.~Katsura, \emph{Nonseparable {UHF} algebras {II}:
  Classification}, Math. Scand. \textbf{117} (2015), no.~1, 105--125.

\bibitem[Fre02]{Fr:MT3}
D.H. Fremlin, \emph{Measure theory}, vol.~3, Torres--Fremlin, 2002.

\bibitem[FS10]{FaSh:Dichotomy}
I.~Farah and S.~Shelah, \emph{A dichotomy for the number of ultrapowers}, J.
  Math. Logic \textbf{10} (2010), 45--81.

\bibitem[FV23]{farah2023obstructions}
I.~Farah and A.~Vignati, \emph{Obstructions to countable saturation in corona
  algebras}, Proc. Amer. Math. Soc. \textbf{151} (2023), no.~03, 1285--1300.

\bibitem[Gao23]{gao2023topics}
D.~Gao, \emph{On topics related to tensor products with abelian von {N}eumann
  algebras}, arXiv preprint arXiv:2306.02086 (2023).

\bibitem[GEPT24]{gao2024conjugacy}
D.~Gao, S.K. Elayavalli, G.~Patchell, and H.~Tan, \emph{On conjugacy and
  perturbation of subalgebras}, arXiv preprint arXiv:2403.08072 (2024).

\bibitem[Gha15]{ghasemi2015isomorphisms}
S.~Ghasemi, \emph{Isomorphisms of quotients of {FDD}-algebras}, Israel J. Math.
  \textbf{209} (2015), no.~2, 825--854.

\bibitem[GJ24]{gao2024elementary}
D.~Gao and D.~Jekel, \emph{Elementary equivalence and disintegration of tracial
  von {N}eumann algebras}, arXiv preprint arXiv:2410.05529 (2024).

\bibitem[Har23]{hart:continuous_mt}
B.~Hart, \emph{An introduction to continuous model theory}, Model theory of
  operator algebras, De Gruyter Ser. Log. Appl., vol.~11, De Gruyter, Berlin,
  2023, pp.~83--131. \MR{4654490}

\bibitem[HP24]{hiatt2024singular}
P.~Hiatt and S.~Popa, \emph{On the singular abelian rank of ultraproduct {II$_1
  $} factors}, Journal of Operator Theory \textbf{91} (2024), no.~2, 335--347.

\bibitem[Jec85]{jech1985abstract}
T.J. Jech, \emph{Abstract theory of abelian operator algebras: an application
  of forcing}, Trans. Amer. Math. Soc. (1985), 133--162.

\bibitem[Jec03]{Jech:ST}
T.~Jech, \emph{Set theory}, Springer Monographs in Mathematics,
  Springer-Verlag, Berlin, 2003, The third millennium edition, revised and
  expanded. \MR{1940513}

\bibitem[Kei99]{keisler1999randomizing}
H.J. Keisler, \emph{Randomizing a model}, Advances in Mathematics \textbf{143}
  (1999), no.~1, 124--158.

\bibitem[Kun14]{kunen2014set}
K.~Kunen, \emph{Set theory: an introduction to independence proofs}, Elsevier,
  2014.

\bibitem[LS96]{laskowski1996forcing}
M.C. Laskowski and S.~Shelah, \emph{Forcing isomorphism {II}}, J. Symb. Logic
  \textbf{61} (1996), no.~4, 1305--1320.

\bibitem[Mah42]{maharam}
D.~Maharam, \emph{On homogeneous measure algebras}, Proc. Nat. Acad. Sci.
  U.S.A. \textbf{28} (1942), 108--111. \MR{6595}

\bibitem[mh]{argerami_overflow}
mathbeginner (https://math.stackexchange.com/users/756859/mathbeginner),
  \emph{Von {N}eumann algebra contains the hyperfinite factor {$R$}},
  Mathematics Stack Exchange, URL:https://math.stackexchange.com/q/4876663
  (version: 2024-03-07).

\bibitem[Oza]{ozawa2017exercise}
N.~Ozawa, \emph{An ``exercise'' on von {N}eumann algebra tensor product},
  MathOverflow, URL:https://mathoverflow.net/q/270650 (version: 2017-05-25).

\bibitem[Oza85]{ozawa1985nonuniqueness}
M.~Ozawa, \emph{Nonuniqueness of the cardinality attached to homogeneous
  \awstar-algebras}, Proc. Amer. Math. Soc. \textbf{93} (1985), no.~4,
  681--684.

\bibitem[Pop83]{popa1983orthogonal}
S.~Popa, \emph{Orthogonal pairs of *-subalgebras in finite von {N}eumann
  algebras}, Journal of Operator Theory (1983), 253--268.

\bibitem[Pop06]{popa2006strong}
\bysame, \emph{Strong rigidity of {II$_1$} factors arising from malleable
  actions of w-rigid groups, {I}}, Invent. Math. \textbf{165} (2006), no.~2,
  369--408.

\bibitem[Pop14]{popa2014independence}
\bysame, \emph{Independence properties in subalgebras of ultraproduct {II$_1$}
  factors}, J. Funct. Anal. \textbf{266} (2014), no.~9, 5818--5846.

\bibitem[Pop22]{popa2022topics}
\bysame, \emph{Topics in {II$_1$} factors}, graduate courses at UCLA, Winter
  2020, Fall 2022.

\bibitem[She90]{shelah1990classification}
S.~Shelah, \emph{Classification theory and the number of non-isomorphic
  models}, Elsevier, 1990.

\bibitem[She98]{Sh:PIF}
\bysame, \emph{Proper and improper forcing}, Perspectives in Mathematical
  Logic, Springer, 1998.

\bibitem[She23]{shelah2023general}
\bysame, \emph{General non-structure theory}, Beyond First Order Model Theory,
  Volume {II}, Chapman and Hall/CRC, 2023, pp.~217--288.

\bibitem[She25]{shelah2025twins}
\bysame, \emph{Twins: non-isomorphic models forced to be isomorphic. {Part I}},
  arXiv preprint arXiv:2505.02088 (2025).

\bibitem[Sol70]{solovay1970model}
R.M. Solovay, \emph{A model of set theory in which every set of reals is
  {L}ebesgue measurable}, Ann. of Math. (2) \textbf{92} (1970), no.~1, 1--56.

\bibitem[Tak03]{Tak:TheoryI}
M.~Takesaki, \emph{Theory of operator algebras. {I}}, Encyclopaedia of
  Mathematical Sciences, vol. 125, Springer-Verlag, Berlin, 2003, Operator
  Algebras and Non-commutative Geometry, 8.

\bibitem[VV17]{vaccaro2017generic}
A.~Vaccaro and M.~Viale, \emph{Generic absoluteness and boolean names for
  elements of a {P}olish space}, Bollettino dell'Unione Matematica Italiana
  \textbf{10} (2017), 293--319.

\end{thebibliography}

\end{document}